\newtheorem*{rep@theorem}{\rep@title}
\newcommand{\newreptheorem}[2]{\newtheorem*{rep@#1}{\rep@title}\newenvironment{rep#1}[1]{\def\rep@title{#2 \ref*{##1}}\begin{rep@#1}}{\end{rep@#1}}}
\newtheorem{thm}{Theorem}[section]
\newtheorem{lem}[thm]{Lemma}
\newtheorem{prop}[thm]{Proposition}
\newtheorem{cor}[thm]{Corollary}
\theoremstyle{definition}
\newtheorem{df}[thm]{Definition}
\newtheorem{rem}[thm]{Remark}
\newtheorem{ex}[thm]{Example}
\newtheorem{conv}[thm]{Convention}
\newtheorem{theor}{Theorem} 
\newtheorem{corol}[theor]{Corollary} \newreptheorem{corol}{Corollary}
\newtheorem{apptheor}[thm]{Theorem}
\newtheorem{appcorol}[thm]{Corollary}
\newcommand{\Z}{\mathbb Z}
\newcommand{\Q}{\mathbb Q}
\newcommand{\PP}{\mathbb P}
\newcommand{\tJ}{\widetilde{J}}
\newcommand{\tj}{\widetilde{j}}
\newcommand\inter{\mathop{\rm int}}
\newcommand\wasM{N}
\newcommand\twasM {\widetilde{N}}
\newcommand\wasN{M}
\newcommand\tS{\widetilde{S}}
\newcommand\genus{\mathop{\rm genus}}
\def\epsilon{\varepsilon}
\def\phi{\varphi}
\newcommand{\Out}{\mbox{Out}}
\newcommand{\Aut}{\mbox{Aut}}
\newcommand{\rank}{\mathrm{rank}}
\newcommand{\rr}{\mbox{\rm rr}}
\begin{document}

\title[On two-generator subgroups of mapping torus groups]{On two-generator subgroups of mapping torus groups}

\author{Naomi Andrew}
\address{Mathematical Institute, Andrew Wiles Building, Observatory Quarter, University of Oxford, Oxford OX2 6GG, United Kingdom} \email{\tt Naomi.Andrew@maths.ox.ac.uk}

\author{Edgar A. Bering IV}
\address{Department of Mathematics and Statistics, San Jos\'{e} State University, One Washington Square, San Jose, CA 95192, U.S.A.} \email{\tt edgar.bering@sjsu.edu} 

\author{Ilya Kapovich}
\address{Department of Mathematics and Statistics, Hunter College of CUNY,
  695 Park Avenue, New York, NY 10065, U.S.A.
  \newline \indent\tt http://math.hunter.cuny.edu/ilyakapo/, } \email{\tt ik535@hunter.cuny.edu}

\author{Stefano Vidussi}
\address{Department of Mathematics, University of California Riverside, 900 University Ave., Riverside, CA 92501, U.S.A.} \email{\tt svidussi@ucr.edu}

\makeatletter
\let\@wraptoccontribs\wraptoccontribs
\makeatother
\contrib[With an appendix by]{Peter Shalen}
\address{Department of Mathematics, Statistics, and Computer Science (M/C 249), University of
Illinois at Chicago, 851 S. Morgan St., Chicago, IL 60607} \email{\tt petershalen@gmail.com}

\begin{abstract}
We prove that if $G_\phi=\langle F, t| t x t^{-1} =\phi(x), x\in F\rangle$ is
the mapping torus group of an injective endomorphism $\phi: F\to F$ of a free
group $F$ (of possibly infinite rank), then every two-generator subgroup $H$ of
$G_\phi$ is either free or a (finitary) sub-mapping torus. As an application we show that if $\phi\in \Out(F_r)$ is a fully irreducible atoroidal automorphism then every two-generator subgroup of $G_\phi$ is either free or has finite index in $G_\phi$.
\end{abstract}

\makeatletter
\@namedef{subjclassname@2020}{\textup{2020} Mathematics Subject Classification}
\makeatother

\subjclass[2020]{Primary 20F65, Secondary  20F05, 57M}

\date{}
\maketitle

\tableofcontents

\section{Introduction}

Characterizing subgroups with a given number of generators in a group
from some natural class is usually too difficult. However, two-generator subgroups often
have more restricted algebraic structure, particularly in various geometric
contexts. A well-known result of Delzant~\cite{D91}*{Th\'eor\`eme~2.1} shows that in a
torsion-free word-hyperbolic group $G$ there are only finitely many conjugacy
classes of non-elementary freely indecomposable two-generator subgroups. In the setting of right-angled Artin groups, a classical result of Baudisch states that any two-generator subgroup is either free-abelian or free~\cite{B81}*{Corollary 1.3}. Antol\'{i}n and Minasyan generalize this to a structure theorem for two-generator subgroups of graph products of torsion-free groups in a product and subgroup closed family~\cite{AM15}*{Corollary 1.5}.
An
even older result of Jaco and Shalen from 1979~\cite{JS79}*{Theorem~VI.4.1}
proves that if $M$ is an orientable\footnote{All 3-manifolds considered by Jaco
and Shalen are assumed to be orientable~\cite{JS79}*{convention p. 2}. See the
appendix to this paper for the extension to the non-orientable setting.}
atoroidal Haken
3-manifold and $H$ is a two-generator
subgroup of $G=\pi_1(M)$ then either $H$ is free (of rank $\le 2$) or $H$ is
free abelian (again of rank $\le 2$) or $H$ has finite index in
$G$.
(A result similar to Jaco and Shalen's was proved independently and
contemporaneously by Thomas Tucker in \cite{Tuc77}.)
This
theorem applies, for example, to the case where $M$ is a complete finite-volume
hyperbolic Haken 3-manifold without boundary.
Note that in this case if $M$ is closed
then the Jaco--Shalen result implies that every two-generator subgroup of
$G=\pi_1(M)$ is either free or has finite index in $G$.

In this paper we obtain an analog of the Jaco--Shalen result for two-generator subgroups of mapping tori of free group endomorphisms.

Let $F$ be a free group (of possibly infinite rank), let $\phi:F\to F$ be an injective endomorphism of $F$. We call the group
\[
G_\phi=\langle F, t|\, t x t^{-1} =\phi(x), x\in F\rangle
\]
the \emph{mapping torus group of $\phi$.} Here $\phi$ may be an automorphism of $F$ in which case $G_\phi$ is a free-by-cyclic group $G_\phi=F\rtimes_\phi \mathbb Z$.
A finitely generated subgroup $H\le
 G_\phi$ is called a \emph{finitary sub-mapping torus
of $G_\phi$} or just \emph{sub-mapping torus of $G_\phi$} if there exist 
$u\in F$, $m\ge 1$ and a nontrivial
finitely generated subgroup $V\le F$ such that with $s=ut^m$, the group $H=\langle s, V\rangle$ and
$sVs^{-1}\le V$. For brevity we will typically use the term ``sub-mapping torus
of $G_\phi$", but it is important to remember that in this definition $V$ is required to be
finitely generated.

 As we see in Lemma~\ref{lem:sub} below, if $H$ is a sub-mapping torus of $G_\phi$ as above, then $H$ has the presentation
\[
H=\langle s, V| sys^{-1}=\psi(y), y\in V\rangle
\]
where $\psi:V\to V$ is an injective endomorphism of the free group $V$ given by $\psi(y)=u\phi^m(y)u^{-1}$ for all $y\in V$. Thus $H=G_\psi$ is the mapping torus group of the injective endomorphism $\psi:V\to V$ of a finite rank free group $V$. This fact explains the term ``sub-mapping torus."

\begin{theor}\label{thm:A}

Let $F$ be a free group, let $\phi:F\to F$ be an injective endomorphism of $F$ and let $G_\phi$
be the mapping torus group of $\phi$.

Then for every two-generator subgroup $H\le G_\phi$ either $H$ is free or $H$
	is conjugate to a finitary sub-mapping torus subgroup of $G_\phi$. 
\end{theor}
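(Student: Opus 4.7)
The plan is to exploit the homomorphism $\pi\colon G_\phi\to\mathbb Z$ sending $F$ to $0$ and $t$ to $1$, and to dichotomize on $\pi(H)=m\mathbb Z$. If $m=0$, then $H$ is a finitely generated subgroup of $\ker\pi=\bigcup_{k\ge 0}t^{-k}Ft^{k}$, so $H\le t^{-N}Ft^{N}$ for some $N$; conjugating by $t^{N}$ gives $H\le F$, so $H$ is free.

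Assume $m\ge 1$. After a Nielsen transformation I may take $\pi(x_1)=m$ and $\pi(x_2)=0$. Writing the generators in normal form $x_1=t^{-a_1}v_1t^{a_1+m}$ and $x_2=t^{-a_2}v_2t^{a_2}$ with $v_1,v_2\in F$, a short computation using $tx=\phi(x)t$ shows that conjugating $x_2$ by $x_1$ lowers the ``floor'' $a_2$ by $m$ whenever $a_2>a_1+m$, so iterated Nielsen moves $x_2\mapsto x_1^kx_2x_1^{-k}$ followed by conjugation of $H$ by $t^{a_1}$ put the generators in the form $s=ut^m$ with $u\in F$ and $w\in F$. Define $\sigma_j:=s^jws^{-j}=u_j\phi^{jm}(w)u_j^{-1}$ where $u_j:=u\phi^m(u)\cdots\phi^{(j-1)m}(u)$; note $\sigma_j\in F$. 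Let $V_k=\langle\sigma_0,\ldots,\sigma_k\rangle\le F$ and $V=\bigcup_k V_k$. By construction $sVs^{-1}\le V$ and $H=\langle s,V\rangle$. If the chain $V_k$ stabilizes, then $V$ is finitely generated and Lemma~\ref{lem:sub} exhibits (the conjugate of) $H$ as a finitary sub-mapping torus.

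The main obstacle is the remaining case: if $V$ is not finitely generated, I must show $H$ is free. The plan is to prove that in this case $\{\sigma_j\}_{j\ge 0}$ is a free basis for $V$. Granting this, the natural surjection from the free group on two generators onto $H=\langle s,w\rangle$ is an isomorphism: elements of its kernel lie in the kernel of the abstract $\mathbb Z$-map $s\mapsto 1$, $w\mapsto 0$, which is itself freely generated by the abstract symbols $s^jws^{-j}$; these symbols map to the $\sigma_j$, which are free in $H$ by the basis claim together with the shift $\sigma_j\mapsto\sigma_{j+1}$ induced by conjugation by $s$. Hence $H\cong F_2$. To establish the basis claim, suppose for contradiction there is a reduced nontrivial relation $r(\sigma_0,\ldots,\sigma_L)=1$ in $F$; conjugating by powers of $s$ yields the sliding relations $r(\sigma_j,\ldots,\sigma_{L+j})=1$ for every $j\ge 0$. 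Combining these with root-uniqueness in $F$ should imply that from some index onwards every new $\sigma_{L+j}$ already lies in $V_{L+j-1}$, so the chain $V_k$ stabilizes, contradicting the hypothesis. This propagation step—converting one relation into stabilization of the whole tower—is the delicate heart of the argument and is where I expect the bulk of the technical work to lie; making it precise will likely require a Stallings-foldings analysis of the finite graphs representing $V_k$, or an induction on the length and shape of $r$.
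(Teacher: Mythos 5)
Your opening reduction is correct and is essentially identical to the paper's: project to $\mathbb Z$, dispose of the case $\pi(H)=0$, and use Euclidean/Nielsen moves on the $t$-exponents to replace the generators (up to conjugacy) by $s=ut^m$ and $w\in F$. The case analysis on whether the ascending chain $V_k=\langle\sigma_0,\dots,\sigma_k\rangle$ stabilizes is also the right dichotomy to aim for, and your observation that stabilization at one stage propagates (since $\sigma_{K+1}\in V_K$ forces $\sigma_{K+2}=s\sigma_{K+1}s^{-1}\in sV_Ks^{-1}\le V_{K+1}$) is fine, as is the endgame deduction that a free basis $\{\sigma_j\}_{j\ge0}$ of $V$ yields $H\cong F_2$.

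However, the step you flag as ``the delicate heart'' is a genuine gap, and it is where the entire content of the theorem lives; the sketch you give does not close it. The difficulty is that a nontrivial relation $r(\sigma_0,\dots,\sigma_L)=1$ does \emph{not} in general make any single $\sigma_i$ redundant: Nielsen reduction of the tuple $(\sigma_0,\dots,\sigma_L)$ may trivialize only some product that mixes the generators, and root-uniqueness handles only the commutation-type relations. There is no evident local mechanism by which the shifted relations $r(\sigma_j,\dots,\sigma_{L+j})=1$ force $\sigma_{L+j}\in V_{L+j-1}$ eventually; what actually forces finite generation of $V$ in the non-free case is the \emph{global} constraint that $H$ is two-generated (for instance, $\psi'(a)=a$, $\psi'(b)=a^{-1}ba$, $\psi'(c)=c$ on $F(a,b,c)$ with $w=b$ gives a non-stabilizing chain inside a sub-mapping torus --- it fails to be a counterexample only because that $H$ is not two-generated). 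The paper imports exactly this global input from Feighn--Handel: an invariant graph pair of minimal relative rank $\rr_0(H)\in\{0,1\}$ exists in tight form (Proposition~\ref{prop:key}); if $\rr_0(H)=0$ one gets a finitely generated invariant subgroup (Lemma~\ref{lem:substandard}) --- which, note, need not coincide with your $V$ --- and if $\rr_0(H)=1$ one gets a presentation of $H$ with deficiency $2$, whence freeness by Magnus's theorem (Proposition~\ref{p:Magnus}, Corollary~\ref{cor:rank2}). Your proposed free-basis claim is, if anything, a strengthening of the theorem, and proving it ``by hand'' via foldings would amount to redoing the Feighn--Handel tightening argument; as written, the proof is incomplete at precisely the point where a new idea is required.
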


\begin{rem}
	If $\phi:F\to F$ is an injective endomorphism of a free group $F$ such that $G_\phi$ is finitely generated, and we include the additional assumption that $H$ is {\em normal} in $G_\phi$, then (even without requiring the finite rank of $H$ be two),
	it is well-known that we can get a stronger conclusion. In fact, $G_\phi$ has cohomological dimension at most two \cite{Bie81}*{Proposition 6.2}, and coherence of $G_\phi$ \cite{FH99} entails that $H$ is
	finitely presented. Therefore by a theorem of
	Bieri $H$ is either free or finite
	index~\cite{Bie76}*{Theorem B}.  
\end{rem}

Note that standard examples of $\mathbb Z\times \mathbb Z$ and, more generally, Baumslag-Solitar subgroups of $G_\phi$ are covered by Theorem~\ref{thm:A} and appear as sub-mapping tori. These subgroups arise from the situations where there exist some $1\ne x\in F$, $p\ne 0$, $u\in F$, $m\ge 1$ such that $sxs^{-1}=x^p$, where $s=ut^m$. There are also more complicated two-generated sub-mapping torus subgroups that may arise in $G_\phi$, as demonstrated by the following example.

\begin{ex}
Let $F=F(a,b,c,d,e)$ and consider an automorphism $\phi\in \Aut(F)$ given by
	\[\phi(a)=b,\, \phi(b)=c, \,\phi(c)=ab^2c^2, \,\phi(d)=dea,\,
	\phi(e)=ede.\]
Put $G_\phi=\langle F, t| txt^{-1}=\phi(x), x\in F\rangle$. Note that $K=\langle a,b,c\rangle$ is a proper free factor of $F$ with $\phi(K)=K$.
Consider the subgroup $H=\langle t,a\rangle\le G_\phi$. Then $H=G_\psi$ is the mapping torus of the automorphism $\psi$ of $K$ where $\psi=\phi|_K$, and $H$ is a sub-mapping torus of $G_\phi$.
\end{ex}

The proof of Theorem~\ref{thm:A} relies on the proof of coherence of mapping tori of free group endomorphisms given by Feighn and Handel~\cite{FH99}. In their proof, Feighn and Handel introduced the notion of an \emph{invariant graph pair} (see Section~\ref{S:pairs}) for representing finitely generated subgroups of such mapping tori. They defined the notion of \emph{relative complexity} for invariant graph pairs and proved that there exist \emph{tight} (or \emph{folded}) invariant graph pairs of minimal complexity and that one can read-off the presentation of a subgroup from having such a pair. In general, finding a tight invariant subgroup pair of minimal complexity is a non-constructive step in Feighn and Handel's proof of coherence. However, for two-generator subgroups we are able to overcome this difficulty and analyze precisely what happens algebraically because the two-generator assumption implies that the minimal relative complexity for subgroups under consideration is either $0$ or $1$.

If $\phi\in\Aut(F)$, then the isomorphism class of $G_\phi$ depends only on the outer automorphism class of $\phi$ in $\Out(F)$. For that reason, if $\phi\in \Out(F)$, by a slight abuse of notation we will denote by $G_\phi$ the group $G_\psi$, where $\psi\in \Aut(F)$ is any representative of the outer automorphism class $\phi$. 

In the case where $\phi \in\Aut(F)$ then every subgroup of
	$G_\phi$ conjugate to a sub-mapping torus subgroup of $G_\phi$ is
	itself a sub-mapping torus subgroup of $G_\phi$. Therefore in the case
	where $\phi$ is an automorphism of $F$, the ``conjugate to" phrase in
	the conclusion of Theorem~\ref{thm:A} can be dropped. Further, the
	structure and geometry of a sub-mapping torus can be determined.

\begin{corol}\label{cor:distort}
	Suppose $G_\phi$ is a free-by-cyclic group $F_r \rtimes_\phi \Z$. Then
	a two-generator subgroup of $G_\phi$ is either free of rank 2, or
	an undistorted (f.g. free)-by-cyclic sub-mapping torus subgroup of $G_\phi$.
\end{corol}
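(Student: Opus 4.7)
The plan is to combine Theorem~\ref{thm:A} with Takahasi's theorem on ascending chains of subgroups of free groups to pin down the algebraic structure, and then to deduce undistortion from the fact that finitely generated subgroups of free groups are undistorted.

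By Theorem~\ref{thm:A}, together with the observation just before the corollary that in the automorphism case the ``conjugate to'' qualifier is superfluous, $H$ is either free or a sub-mapping torus. A two-generator free group has rank at most $2$, handling the first alternative.

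Now assume $H = \langle s, V\rangle$ with $s = u t^m$, $m \ge 1$, and $V \le F_r$ finitely generated nontrivial satisfying $sVs^{-1} \le V$. Since $\phi$ is an automorphism, conjugation by $s$ in $G_\phi$ defines an automorphism $\tilde\psi$ of $F_r$, namely $\tilde\psi(x) = u\phi^m(x)u^{-1}$, and the sub-mapping torus condition becomes $\tilde\psi(V) \le V$. Consider the ascending chain
\[
V \le \tilde\psi^{-1}(V) \le \tilde\psi^{-2}(V) \le \cdots
\]
in $F_r$; since $\tilde\psi$ is a bijection of $F_r$, each term has the same finite rank as $V$. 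Takahasi's theorem (any ascending chain of subgroups of a free group of bounded rank stabilizes) yields $\tilde\psi^{-N}(V) = \tilde\psi^{-(N+1)}(V)$ for some $N$, and applying $\tilde\psi^{N+1}$ gives $\tilde\psi(V) = V$. Thus the endomorphism $\psi$ of Lemma~\ref{lem:sub} is in fact an automorphism of $V$, so $H = V \rtimes_\psi \langle s\rangle$ is a (finitely generated free)-by-cyclic sub-mapping torus.

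For undistortion, the plan is to exploit that $V$ is undistorted in $F_r$ (a classical property of finitely generated subgroups of free groups), together with the fact that conjugation by $s$ acts on $V$ as the restriction of the $F_r$-automorphism $\tilde\psi$, so $\tilde\psi$-iterates of any $v \in V$ have matching word-length growth whether measured in $V$ or in $F_r$. Combined with the linear lower bound $|v s^k|_{G_\phi} \ge m|k|$ coming from the projection $G_\phi \to \Z$, this should let one convert any geodesic $G_\phi$-expression for an element of $H$ into a comparably short $H$-expression, yielding the quasi-isometric embedding $H \hookrightarrow G_\phi$. The main technical point will be this conversion: one must show that any compression of an $F_r$-word achievable by $t$-conjugation in $G_\phi$ is mirrored by an $s$-conjugation compression available already inside $H$, which is what makes the distortion functions of $V$ in $H$ and in $G_\phi$ match up to multiplicative constants.
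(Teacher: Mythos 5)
Your structural analysis is fine and matches the paper: the reduction via Theorem~\ref{thm:A}, and the upgrade of $sVs^{-1}\le V$ to $sVs^{-1}=V$ via the stabilization of the ascending chain $V\le\tilde\psi^{-1}(V)\le\cdots$ of subgroups of bounded rank, is exactly the content of Proposition~\ref{prop:sub-fbc} (the paper invokes the Higman--Takahasi theorem there), so $H=V\rtimes_\psi\langle s\rangle$ is an (f.g.\ free)-by-cyclic sub-mapping torus. (One small omission: Theorem~\ref{thm:A} also allows $H$ to be cyclic, e.g.\ $H=\langle g\rangle$ with nonzero $t$-exponent, which is neither $F_2$ nor a sub-mapping torus in the paper's sense since $V$ must be nontrivial; the paper treats this case separately.)

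The undistortion claim, however, is where your argument has a genuine gap: you state the key step rather than prove it. Knowing that $V$ is undistorted in $F_r$ and that $\tilde\psi$-iterates of elements of $V$ have comparable lengths in $V$ and in $F_r$ is not enough. The word length of $v\in V$ in $G_\phi$ is governed by decompositions of $v$ as products of the form $\prod_i \tilde\psi^{k_i}(w_i)$ with $w_i\in F_r$, and an efficient such decomposition may use pieces $w_i$ that do not lie in $V$ and intermediate prefixes that leave $V$ entirely; nothing in your sketch rules this out, and the assertion that ``any compression achievable by $t$-conjugation in $G_\phi$ is mirrored by an $s$-conjugation compression inside $H$'' is precisely the theorem to be proved, not a consequence of quasiconvexity of $V$ in $F_r$. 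The paper does not attempt this from scratch: it cites Mutanguha (\cite{Mut}, Lemma~4.1 for the sub-mapping torus case and Lemma~4.2 for the cyclic case), which are substantive results. To complete your proof you would need either to reproduce an argument of that strength or to cite such a result; as written, the undistortion half of the corollary is unproven.
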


Recall that for an integer $r\ge 2$, an element $\phi\in\Aut(F_r)$ (or the
corresponding outer automorphism class in $\Out(F_r)$) is called \emph{fully
irreducible} if there do not exist a power $p\ge 1$ and a proper free factor
$1\ne U\lneq F_r$ of $F_r$ such that $\phi^p([U])=[U]$, where $[U]$ is the
conjugacy class of $U$ in $F_r$.  Also an element $\phi\in\Aut(F_r)$ (or the
corresponding outer automorphism class in $\Out(F_r)$) is called
\emph{atoroidal} if there do not exist a power $p\ge 1$ and an element $1\ne u\in F_r$
such that $\phi^p([u])=[u]$, where again $[u]$ is the conjugacy class of $u$ in
$F_r$. The notion of being fully irreducible provides the main $\Out(F_r)$
analog of being a pseudo-Anosov mapping class group element~\cite{CH12}. By a result of Brinkmann~\cite{B00}*{Theorem~1.2},
for $\phi\in\Out(F_r)$ the group $G_\phi$ is word-hyperbolic if and only if
$\phi$ is atoroidal. It is known that for most reasonable random
walks on $\Out(F_r)$, where $r\ge 2$, a ``random" element of $\Out(F_r)$ is
fully irreducible, and for $r\ge 3$ it is also atoroidal by~\cite{Rivin}*{Theorem 4.1} or~\cite{Si18}*{Theorem~1.4, Proposition~3.9}.

As an application of Theorem~\ref{thm:A} we obtain:

\begin{theor}\label{thm:B}

Let $F_r$ be a free group of finite rank $r\ge 2$, let $\phi\in \Out(F_r)$ be a fully irreducible outer automorphism and let $G_\phi$
be the mapping torus group of $\phi$.

Then for every two-generator subgroup $H\le G_\phi$ either $H$ is free, free abelian, a Klein bottle group, or $H$ has finite index in $G_\phi$. 
\end{theor}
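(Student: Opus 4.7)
The plan is to apply Theorem~\ref{thm:A} and Corollary~\ref{cor:distort} to reduce to sub-mapping torus subgroups and then classify them using structural properties of fully irreducible outer automorphisms.

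Let $H \le G_\phi$ be a two-generator subgroup. By Theorem~\ref{thm:A}, $H$ is either free (hence of rank at most two, giving one of the allowed outcomes) or conjugate in $G_\phi$ to a sub-mapping torus. Since $\phi$ is an automorphism, the ``conjugate to'' qualifier may be dropped, so $H$ is itself a sub-mapping torus. Corollary~\ref{cor:distort} then writes $H = V \rtimes_\psi \Z$ where $V \le F_r$ is a nontrivial finitely generated free subgroup and $\psi \in \Aut(V)$ is induced by conjugation by $s = ut^m$ for some $u \in F_r$ and $m \ge 1$. Unpacking the definition of a sub-mapping torus, this is equivalent to the statement that $\phi^m$ fixes the conjugacy class $[V]$ in $F_r$.

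The crux of the argument is the following structural claim for fully irreducible $\phi$: every nontrivial finitely generated $V \le F_r$ with $\phi^m([V]) = [V]$ for some $m \ge 1$ is either infinite cyclic generated by a $\phi$-periodic conjugacy class (which forces $\phi$ to be non-atoroidal) or has finite index in $F_r$. I would argue this in two stages. First, the free factor support $\mathcal A(V)$ of $V$ is $\Out(F_r)$-equivariant and hence $\phi^m$-invariant, so by full irreducibility it must equal $[F_r]$; that is, $V$ lies in no proper free factor. The substantial step is upgrading this to ``cyclic or finite index'', which I would do using a Bestvina--Handel train-track representative $f\colon \Gamma \to \Gamma$ of $\phi$: lifting $f$ to the $V$-cover $\tilde \Gamma \to \Gamma$, the Perron--Frobenius expansion of $f$ and the $\phi$-invariance of $V$ force the (finite) $V$-core $\Gamma_V \subseteq \tilde\Gamma$ either to fill $\tilde\Gamma$ (so $V$ has finite index) or to collapse to a single $\phi$-periodic loop (so $V$ is cyclic). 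In the non-atoroidal case, an alternative route is available: $\phi$ is then geometric, so $G_\phi \cong \pi_1(M)$ for an atoroidal Haken 3-manifold $M$ (the mapping torus of a pseudo-Anosov on a compact surface with one boundary component), and the Jaco--Shalen theorem together with the appendix by Shalen yields the classification directly.

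Granted the structural claim, the conclusion follows by case analysis on $\rank V$. If $\rank V \ge 2$, then $V$ has finite index in $F_r$, and the exact sequence $1 \to F_r \to G_\phi \to \Z \to 1$ yields $[G_\phi : H] = m \cdot [F_r : V] < \infty$, so $H$ has finite index. If $V = \langle v\rangle$ is infinite cyclic, then $\psi$ is an automorphism of $\Z$ and so acts by $\pm 1$; hence $H = \langle s, v \mid svs^{-1} = v^{\pm 1}\rangle$, which is $\Z^2$ (free abelian) in the $+$ case and the Klein bottle group in the $-$ case. I expect the hardest step to be the structural claim about $\phi^m$-invariant finitely generated subgroups; the remaining steps are straightforward bookkeeping with Theorem~\ref{thm:A} and an index computation.
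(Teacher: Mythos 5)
Your proposal follows essentially the same route as the paper: reduce via Theorem~\ref{thm:A} to a sub-mapping torus $\langle s,V\rangle$ with $sVs^{-1}=V$, then split on $\rank V$, with rank~$1$ giving $\Z^2$ or the Klein bottle group and rank~$\ge 2$ giving finite index. Your ``structural claim'' is precisely the paper's Lemma~\ref{lem:invariant-fi}, which the paper proves by observing that the iterates of a non-periodic element of $V$ produce a leaf of the attracting lamination carried by $V$, and then invoking Bestvina--Feighn--Handel's result that a finitely generated subgroup carrying $\Lambda^+_\phi$ has finite index; your sketched Perron--Frobenius/core argument in the $V$-cover is the same underlying idea, though as stated the dichotomy ``the core fills or collapses to a single loop'' is not quite the right formulation (one should argue that if $V$ contains any element with non-periodic conjugacy class then its core carries the lamination), so to make that step rigorous you would in effect reprove, or should simply cite, \cite{BFH97}*{Proposition~2.4}.
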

Here by the Klein bottle group we mean the Baumslag-Solitar group $BS(1,-1)=\langle a,b| b ab^{-1} =a^{-1}\rangle$.

\begin{rem}
In Theorem~\ref{thm:B} (and similar statements below), $H  \le G_\phi$ can be finite index only if $b_1(G_\phi) \leq 2$, as by the existence of transfer, the first Betti number is non-decreasing over finite index subgroups (see Lemma~\ref{lem:tra} below).
\end{rem}
\begin{rem}
An obvious way to get 2-generated free subgroups in a free-by-cyclic group
	$G_\phi$ such as the one in  Theorem \ref{thm:B} is as subgroups of
	fibers of algebraic fibrations of $G_\phi$. In fact, one can see that
	not all 2-generated free subgroups of $G_{\phi}$ arise this way: For
	instance, one can construct a group $G_\phi$ as in Theorem \ref{thm:B}
	(for $r \geq 3$, so that $G_{\phi}$ is word-hyperbolic) such that
	$b_1(G_{\phi}) = 1$ in which case the subgroup $H = \langle
	x^n,t^n\rangle$ is free, for sufficiently large $n$, but is not
	contained in the unique fiber of $G_{\phi}$. On the other hand, it is not clear whether one could find a finite-index subgroup of $G_{\phi}$ admitting a fibration whose fiber contains $H$.
\end{rem}

If $\phi\in\Out(F_r)$ is fully irreducible but not atoroidal, then $G_\phi$ is a 3-manifold group. In this case the conclusion of Theorem~\ref{thm:B} might be derived using the methods of Jaco and Shalen mentioned above. As we explain below, the non-orientable case presents some extra complication to the 3-manifold approach, requiring additional work.

In fact, using properties of the pseudo-Anosov homeomorphism we generalize one case of the Jaco--Shalen result by removing the orientability assumption:

\begin{corol}\label{cor:3manifold}
	Let $G$ be the fundamental group of a fibered cusped hyperbolic 3-manifold. Then every two-generator subgroup $H \le G$ is either free, free abelian, a Klein bottle group, or $H$ has finite index in $G$. 
\end{corol}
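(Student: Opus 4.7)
The plan is to combine Corollary~\ref{cor:distort} with the dynamics of pseudo-Anosov homeomorphisms. Since $M$ is a fibered cusped hyperbolic 3-manifold, I first realize it as the mapping torus $M = M_f$ of a pseudo-Anosov homeomorphism $f\colon \Sigma \to \Sigma$ on a finite-type punctured surface $\Sigma$; then $F := \pi_1(\Sigma)$ is a finitely generated free group and, writing $\phi \in \Out(F)$ for the outer monodromy, $G \cong G_\phi = F \rtimes_\phi \Z$. Given a two-generator subgroup $H \le G$, Corollary~\ref{cor:distort} asserts that either $H$ is free (in which case we are done), or, up to conjugation in $G$, $H = \langle V, s \rangle \cong V \rtimes_\psi \Z$, where $V \le F$ is finitely generated, $s = ut^m$ for some $u \in F$ and $m \ge 1$, and $\psi(v) = u\phi^m(v)u^{-1}$ is an automorphism of $V$. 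In particular $[V]$ is preserved by $\phi^m$ in $F$.

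I then split according to the rank of $V$. If $\rank V = 0$, then $H \cong \Z$. If $\rank V = 1$, writing $V = \langle v \rangle$ gives $\psi(v) = v^{\pm 1}$ and hence $H \cong \Z^2$ or $H \cong BS(1,-1)$; moreover, since $f$ is pseudo-Anosov it has no periodic non-peripheral conjugacy class in $F$ (geodesic lengths of non-peripheral elements in a hyperbolic metric on $\Sigma$ grow exponentially under iteration of $f$), so $v$ is conjugate into a peripheral subgroup and $H$ actually lies inside a peripheral cusp subgroup of $G$. The main case is $\rank V \ge 2$, where I claim $V$ has finite index in $F$; since a short computation gives $[G:H] = m \cdot [F:V]$, this yields $[G:H] < \infty$. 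The $\phi^m$-invariance of $[V]$ lifts $f^m$ to a homeomorphism $\tilde f^m$ of the cover $\tilde\Sigma \to \Sigma$ corresponding to $V$, and by Scott's compact core theorem $\tilde\Sigma$ admits a compact core $K$ with $\pi_1(K) = V$ that is preserved by $\tilde f^m$ up to isotopy. Stallings' fibration theorem then shows that the cover $M_H \to M$ corresponding to $H$ fibers over $S^1$ with fiber homeomorphic to $K$ and monodromy (homotopic to) $\tilde f^m|_K$. Using the pseudo-Anosov property of $f$, I argue that $\tilde f^m|_K$ is itself pseudo-Anosov on $K$: any $\tilde f^m|_K$-invariant essential multicurve on $K$ would project to an $f^m$-invariant non-peripheral multicurve on $\Sigma$, contradicting the pseudo-Anosov hypothesis. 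Thurston's hyperbolization theorem then endows $M_H$ with a complete hyperbolic structure of finite volume, forcing $M_H \to M$ to have finite degree.

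The main obstacle is the claim that $\tilde f^m|_K$ is pseudo-Anosov: the projection $K \to \Sigma$ need not be an embedding when $V$ has infinite index in $F$, so one must argue carefully that an invariant multicurve on $K$ actually descends to a non-peripheral invariant multicurve on $\Sigma$ rather than to a peripheral curve or to an inessential one. An alternative, more geometric route avoids this technicality by invoking Canary's covering theorem applied directly to $M_H \to M$: a geometrically infinite end of $M_H$, coming from the Stallings fibration with non-abelian free fiber group $V$, combined with the finite volume of $M$, forces the covering to have finite degree.
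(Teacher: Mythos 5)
Your reduction (mapping torus of a pseudo-Anosov, Theorem~\ref{thm:A}/Corollary~\ref{cor:distort}, then a trichotomy on $\rank V$) matches the paper, and the rank~$0$ and rank~$1$ cases are fine. The problem is exactly the step you flag yourself in the main case $\rank V\ge 2$: you work in the \emph{infinite} cover $\tilde\Sigma\to\Sigma$ corresponding to $V$, take a compact core $K$, and try to rule out an invariant multicurve on $K$ by pushing it down to $\Sigma$. Since $K\to\Sigma$ is only an immersion, an essential non-peripheral curve of $K$ may project to a curve that is peripheral or even inessential in $\Sigma$, so no contradiction with $f$ being pseudo-Anosov is obtained; and the subsequent steps (that the mapping torus of $\tilde f^m|_K$ is all of $M_H$, and that finite-volume hyperbolizability of that mapping torus forces the covering $M_H\to M$ to have finite degree) quietly require tameness of $M_H$ and uniqueness of the complete structure, which you have not supplied. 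Your fallback via Canary's covering theorem can be made to work (one needs tameness, e.g.\ Bonahon's theorem since $H=V\rtimes\Z$ is freely indecomposable when $\rank V\ge 2$, plus the covering theorem and the finite volume of $M$), but as written it is a sketch resting on much heavier machinery than the problem needs, and you have not verified that $M_H$ actually has a geometrically infinite end.

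The paper's Lemma~\ref{lem:sub_pA} closes this gap with a different and much lighter device: if $V$ had infinite index, subgroup separability (LERF) of $F_r$ realizes $V$ as the fundamental group of a \emph{proper embedded} essential subsurface of a \emph{finite} cover $\Sigma_0$ of $\Sigma$. A power $f^k$ lifts to a pseudo-Anosov homeomorphism of $\Sigma_0$ preserving (the conjugacy class of) $V$, hence permuting the finitely many essential boundary curves of that subsurface that are not boundary curves of $\Sigma_0$ --- making the lift reducible, a contradiction. Because everything happens in a finite cover, the curves are honestly embedded and essential, which is precisely the point your infinite-cover argument cannot guarantee. I would replace your rank~$\ge 2$ argument with this separability step (and note that it also delivers the peripherality of $V$ in the rank~$1$ case without your geodesic-length argument).
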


One might expect an easier alternate argument for the proof of Theorem~\ref{thm:B} in the case where $\phi\in\Out(F_r)$ is fully irreducible but not atoroidal, using 3-manifold methods.
Since $\phi\in\Out(F_r)$ is fully irreducible but not atoroidal, a result of Bestvina and Handel~\cite{BH95} implies that $\phi$ is induced by a pseudo-Anosov homeomorphism $f$ of a compact connected surface $\Sigma$ with a single boundary component. Then the mapping torus $M$ of $f$ is a (truncated) finite volume complete hyperbolic 3-manifold with a single cusp which has $G_\phi=\pi_1(M)$.  Note that $M$ is orientable if and only if $\Sigma$ is orientable and $f$ is orientation preserving.
With this set-up one might hope to use the result of Jaco and Shalen \cite{JS79}*{Theorem~VI.4.1} that motivates this paper to obtain the same conclusion about two-generator subgroups of $G_\phi$: that every such subgroup is either free or free abelian or the Klein bottle group or has finite index in $G_\phi$. However, it is not straightforward to apply that result to the non-orientable case, and some further techniques from Jaco and Shalen's work are needed to handle non-orientable fibers. (A more recent paper of Boileau and Weidmann~\cite{BW05} about 3-manifolds with two-generated fundamental group also only considers the orientable case.)  The same issue arises in trying to apply the results of Jaco and Shalen \cite{JS79} to obtain a 3-manifold proof of Corollary~\ref{cor:3manifold}.

Nevertheless, in the appendix to the present paper Shalen remedies
this situation and generalizes the relevant results 
to include $\PP^2$-irreducible  3-manifolds that may be non-orientable 
(and may fail to be Haken manifolds, i.e. to be ``sufficiently
large'').
In particular, Corollary~\ref{appendix-prop} in the appendix implies the conclusion of Corollary~\ref{cor:3manifold}  (and thus also of Theorem~\ref{thm:B} in the fully irreducible but non-atoroidal case).

\begin{repappcorol}{appendix-prop}[P. Shalen]
Let $\wasM $ be a compact, $\PP^2$-irreducible  $3$-manifold such that
every rank-$2$ free
abelian subgroup of $\pi_1(\wasM )$ is peripheral. Let $H$ be a subgroup
of $\pi_1(\wasM )$ that has rank at most $2$. Then $H$ is either a
free group, a free abelian group, a Klein bottle group, or a
finite-index subgroup of $\pi_1(\wasM )$.
\end{repappcorol}
 
This result is in turn deduced from a more general result showing that for any fixed integer $k\ge 2$, under some additional algebraic assumptions about the fundamental group of a 3-manifold $N$, every $k$-generated subgroup of $G=\pi_1(M)$ is isomorphic to a free product of some free group, some number of copies of $\mathbb Z^2$ and some number of copies of the Klein bottle group.

\begin{repapptheor}{k-gen}[P. Shalen]
Let $\wasM $ be a compact, $\PP^2$-irreducible  $3$-manifold, and let
$k\ge2$ be an integer. Suppose that 
every rank-$2$ free
abelian subgroup of $\pi_1(\wasM )$ is peripheral. Suppose also that
either 

(a) $\wasM$ is orientable and $\pi_1(\wasM)$ has no subgroup isomorphic to the
fundamental group of a closed, orientable surface $S$ such that
$1<\genus S<k$, or 

(b) $\pi_1(\wasM)$ has no subgroup isomorphic to the
fundamental group of a closed surface $S$ such that $\chi(S)$ is even and
$2-2k<\chi( S)<0$. 

Let $H$ be a subgroup
of $\pi_1(\wasM )$ that has rank at most $k$. Then either $H$ has
finite index in $\pi_1(\wasM)$, or $H$ is a free product of finitely
many subgroups, each of which is  a
 free abelian group of rank at most $2$ or a Klein bottle group.
\end{repapptheor}

It would be interesting to understand if there exist any reasonable analogues of Theorem~\ref{k-gen} in the context of free-by-cyclic groups or, more generally, mapping tori of free group endomorphisms.

Returning to outer automorphisms, when $\phi\in\Out(F_r)$ is fully irreducible and atoroidal (which necessarily implies that $r\ge 3$), $G_\phi$ is a word-hyperbolic group~\cite{B00}*{Theorem~1.2} and hence $G_\phi$ contains no $\mathbb Z\times \mathbb Z$ subgroups (and therefore $G_\phi$ cannot contain a Klein bottle subgroup either).  Therefore in this case Theorem~\ref{thm:B} implies a sharper corollary. 

\begin{corol}\label{cor:A}
	Let $F_r$ be a free group of finite rank $r\ge 3$, let $\phi\in\Out(F_r)$ be a fully irreducible atoroidal outer automorphism. 
	
	Then every two-generator subgroup $H\le G_\phi$ of the mapping torus group is either free or has finite index in $G_\phi$. 
\end{corol}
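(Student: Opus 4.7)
The proof proposal is essentially to combine Theorem~\ref{thm:B} with word-hyperbolicity. Since $\phi$ is fully irreducible, Theorem~\ref{thm:B} applies and yields four possible outcomes for a two-generator subgroup $H\le G_\phi$: either $H$ is free, free abelian, a Klein bottle group, or of finite index in $G_\phi$. The plan is to eliminate the middle two possibilities using the additional atoroidal hypothesis.

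First I would invoke Brinkmann's theorem~\cite{B00}*{Theorem~1.2}, which says that the mapping torus group $G_\phi$ of an element $\phi\in\Out(F_r)$ is word-hyperbolic if and only if $\phi$ is atoroidal. Since by assumption $\phi$ is atoroidal, $G_\phi$ is word-hyperbolic, and in particular $G_\phi$ contains no subgroup isomorphic to $\mathbb Z\times\mathbb Z$.

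This immediately rules out the possibility that $H$ is a free abelian group of rank $2$. It also rules out the Klein bottle group case: in $BS(1,-1)=\langle a,b\mid bab^{-1}=a^{-1}\rangle$ the index-$2$ subgroup $\langle a,b^2\rangle$ is free abelian of rank $2$ (since $b^2ab^{-2}=a$), so any embedding of the Klein bottle group into $G_\phi$ would produce a $\mathbb Z\times\mathbb Z$ subgroup, contradicting word-hyperbolicity. The only surviving options from Theorem~\ref{thm:B} are therefore that $H$ is free or that $H$ has finite index in $G_\phi$, which is the desired conclusion.

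There is no substantive obstacle here once Theorem~\ref{thm:B} and Brinkmann's theorem are in hand; the entire content of the corollary is the observation that atoroidality collapses the abelian and Klein bottle cases into nothing. The only small point worth being explicit about is the Klein bottle computation verifying that it contains $\mathbb Z^2$, which is needed so that one cannot evade word-hyperbolicity by landing in the Klein bottle case.
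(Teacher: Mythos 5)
Your proof is correct and coincides with the paper's own (alternate) argument: apply Theorem~\ref{thm:B}, invoke Brinkmann's theorem to get word-hyperbolicity of $G_\phi$, and rule out the free abelian and Klein bottle cases since the Klein bottle group contains $\mathbb Z\times\mathbb Z$ as an index-two subgroup. The paper also sketches a second route (atoroidality of $\psi$ directly excludes the rank-one case of $V$ inside the proof of Theorem~\ref{thm:B}), but your version is exactly its stated alternative.
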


It turns out that in the above applications of Theorem~\ref{thm:A} one can remove the ``or has finite index in $G$" alternative in the conclusion of these results by replacing $G$ by a suitable subgroup $G_0$ of finite index at the outset:

\begin{corol}\label{cor:G0} 
For each of the following groups $G$ there exists a finite index subgroup $G_0$ such that every two-generator subgroup of $G_0$ is either free, free abelian, or the Klein bottle group.
	\begin{enumerate}
		\item If $\phi\in\Out(F_r)$ is fully irreducible, $r\ge 2$ and $G = G_\phi$. If $\phi$ is additionally atoroidal, every two-generator subgroup of $G_0$ is free.
		\item If $G$ is the fundamental group of a fibered cusped hyperbolic 3--manifold.
	\end{enumerate}
\end{corol}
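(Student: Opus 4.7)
The plan is to combine Theorem~\ref{thm:B} (in case (1)) or Corollary~\ref{cor:3manifold} (in case (2)) with a transfer argument on the first Betti number, applied to a judiciously chosen finite-index subgroup $G_0\le G$. First I would apply the relevant classification to $G$ itself: in case (1), Theorem~\ref{thm:B} gives that every two-generator subgroup $H\le G_\phi$ is free, free abelian, a Klein bottle group, or of finite index in $G$; when $\phi$ is additionally atoroidal, $G_\phi$ is word-hyperbolic by~\cite{B00}*{Theorem~1.2} and Corollary~\ref{cor:A} yields the sharper dichotomy ``$H$ is free or of finite index in $G$''. In case (2), Corollary~\ref{cor:3manifold} provides the same four-way classification.

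Next I would produce a finite-index subgroup $G_0\le G$ with $b_1(G_0)\ge 3$. In case (2) this is a consequence of the virtual infinite first Betti number of cusped hyperbolic $3$-manifolds due to Agol (building on Wise's cubulation theory); equivalently one may invoke Thurston's theory of fibered faces combined with virtual specialness. For case (1) with $\phi$ non-atoroidal, by~\cite{BH95} $\phi$ is induced by a pseudo-Anosov homeomorphism of a compact surface with a single boundary component, so $G_\phi$ is the fundamental group of a fibered cusped hyperbolic $3$-manifold and case (2) applies. For case (1) with $\phi$ atoroidal (which forces $r\ge 3$), the hyperbolic free-by-cyclic group $G_\phi$ is virtually compact special by the theorem of Hagen and Wise; Agol's theorem then implies $G_\phi$ is virtually RFRS, from which known results yield a finite-index subgroup with $b_1\ge 3$.

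Finally, given such a $G_0$, let $H\le G_0$ be a two-generator subgroup. Then $H\le G$, so by the first step $H$ is free, free abelian, Klein bottle, or has finite index in $G$. In the last case $H$ has finite index in $G_0$ as well, and the transfer homomorphism (Lemma~\ref{lem:tra}) yields $b_1(H)\ge b_1(G_0)\ge 3$; however, $H$ is two-generated, so $b_1(H)\le 2$, a contradiction. Thus $H$ must be free, free abelian, or a Klein bottle group. In the atoroidal case of (1), the first step allows only ``free'' or ``finite index in $G$'', and the transfer argument rules out the latter, so $H$ is free, as required for the sharper statement.

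The main obstacle is the production of $G_0$ with $b_1(G_0)\ge 3$ in the atoroidal fully irreducible case: one needs a hyperbolic free-by-cyclic group to virtually have first Betti number at least three. This relies on the deep virtual specialness / RFRS machinery of Hagen-Wise and Agol, and is the only nonelementary input beyond the main theorems of the paper itself.
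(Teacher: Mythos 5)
Your proof is correct and follows essentially the same strategy as the paper's: apply Theorem~\ref{thm:B} (resp.\ Corollary~\ref{cor:3manifold}) to classify two-generator subgroups of $G$, pass to a finite-index subgroup $G_0$ with $b_1(G_0)>2$, and kill the finite-index alternative via the transfer inequality of Lemma~\ref{lem:tra} against $b_1(H)\le 2$. The one place where you diverge is the production of $G_0$: the paper handles both cases uniformly by noting that $G=F\rtimes\mathbb{Z}$ with $F$ finitely generated free of rank at least two is \emph{large} (citing Button and Hagen--Wise), and largeness immediately yields finite-index subgroups of arbitrarily big first Betti number by pulling back finite-index subgroups of a nonabelian free quotient. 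Your case-split (Agol for the cusped $3$-manifold case, Bestvina--Handel to reduce the non-atoroidal case to it, Hagen--Wise plus Agol in the atoroidal case) reaches the same goal, but the step ``virtually RFRS, hence a finite-index subgroup with $b_1\ge 3$'' is not quite a theorem as stated --- RFRS by itself gives no lower bound on virtual $b_1$ beyond $1$ (the group $\mathbb{Z}$ is RFRS) --- so in the atoroidal case you should instead invoke that a non-elementary virtually compact special hyperbolic group is large, or simply cite largeness of free-by-cyclic groups directly as the paper does.
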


Another application of Theorem~\ref{thm:A} (where cyclic extensions of infinite rank free groups play a role) concerns one-relator groups with torsion. Pride~\cite{P77} proved that if $G=\langle x_1,\dots, x_r| w^n=1\rangle$, where $w$ is a nontrivial freely and cyclically reduced word and where $n\ge 2$, then every two-generator subgroup of $G$ is either a free product of two cyclic groups or itself is a one-relator group with torsion. Since $G$ as above is known to be virtually torsion-free, it follows that $G$ has a subgroup of finite index $G_0$ such that every two-generator subgroup of $G_0$ is free. We recover a weaker version of this result, under the extra assumption that $n$ is big enough.

\begin{theor}\label{thm:C}
Let $G=\langle x_1,\dots, x_r| w^n=1\rangle$ where $r\ge 2$, $w\in F(x_1,\dots, x_r)$ is a nontrivial freely and cyclically reduced word and $n$ is an integer such that $n\ge |w| \ge 2$. Then $G$ contains a subgroup of finite index $G_0$ such that every two-generator subgroup of $G_0$ is free.
\end{theor}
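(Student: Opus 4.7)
The plan is to find, under the hypothesis $n\ge |w|$, a finite-index subgroup $G_0\le G$ that embeds into a mapping torus group $G_\phi$ for some injective endomorphism $\phi\colon F\to F$ of a (possibly infinite-rank) free group $F$, and then to apply Theorem~\ref{thm:A}. The introductory remark that cyclic extensions of infinite-rank free groups play a role strongly suggests that $F$ will indeed be of infinite rank.

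First, by Newman's theorem $G$ is word-hyperbolic (since $n\ge 2$ and $w$ is cyclically reduced and nontrivial), and by Wise's work on one-relator groups with torsion, $G$ is residually finite and virtually torsion-free; let $G_1\trianglelefteq G$ be a torsion-free normal subgroup of finite index. The main step is to use the hypothesis $n\ge|w|$ to produce the embedding $G_0\hookrightarrow G_\phi$ on a further finite-index subgroup $G_0\le G_1$. The natural route is via Reidemeister--Schreier: when the abelianization of $w$ permits, there is a surjection $\pi\colon G\to \Z/n\Z$ with $\pi(w)=1$, whose kernel $G_0$ is automatically torsion-free of index $n$ (every torsion element of $G$ being conjugate to a power of $w$ by Newman's spelling theorem). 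Using the transversal $\{1,t,t^2,\ldots,t^{n-1}\}$ for a lift $t$ of a generator of $\Z/n\Z$, the defining relator $w^n$ and its conjugates rewrite into relators of $G_0$; the hypothesis $n\ge|w|$ ensures that the ``wrap-around'' Reidemeister--Schreier generator appears in these rewrites in a particularly clean way, allowing it to be eliminated and exhibiting $G_0$ as (a subgroup of) $G_\phi$ for a natural injective endomorphism $\phi$ of a free group of infinite rank. If no such $\pi$ exists (for example when $w$ lies in the commutator subgroup), I would replace this by an analogous construction inside $G_1$ provided by Wise's virtual specialness, still relying on $n\ge|w|$ to control the combinatorics.

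With the embedding in hand, Theorem~\ref{thm:A} implies that every two-generator subgroup $H\le G_0$ is either free or is (conjugate to) a sub-mapping torus $G_\psi\le G_\phi$. Since $G$ is word-hyperbolic and $G_0$ torsion-free, $H$ is torsion-free word-hyperbolic and contains no Baumslag--Solitar subgroup $BS(1,k)$, which rules out any sub-mapping torus with cyclic fiber $V$. For sub-mapping tori with fiber $V$ of rank at least $2$, the two-generator hypothesis forces $V$ to be generated by the $\psi$-orbit of a single element, and one can, possibly after passing to a further finite-index subgroup of $G_0$ (using residual finiteness to kill the finitely many offending conjugacy classes), arrange that no such subgroup occurs, yielding the desired conclusion that every two-generator subgroup of $G_0$ is free.

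The main obstacle will be the construction in the second step: explicitly producing the mapping torus structure and verifying that the induced endomorphism $\phi$ is injective. The hypothesis $n\ge|w|$ is used in an essential way to control the Reidemeister--Schreier rewriting of $w^n$, and relaxing it to the assumption $n\ge 2$ of Pride's theorem would require a genuinely different technique. A secondary obstacle is the elimination of higher-rank-fiber two-generator sub-mapping torus subgroups in the final step.
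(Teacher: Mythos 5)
Your overall architecture (find a finite-index free-by-cyclic subgroup, apply Theorem~\ref{thm:A}, then rule out the sub-mapping-torus alternative) matches the paper's, but both of your key steps diverge from what actually works, and the second one has a genuine gap.

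For the first step, the paper does not need any Reidemeister--Schreier computation and does not use $n\ge|w|$ here: it simply invokes the theorem of Kielak and Linton~\cite{KL24} that one-relator groups with torsion are virtually free-by-cyclic (with infinite-rank free kernel), taking $G_0\cong F\rtimes_\phi\mathbb Z$ to be that finite-index subgroup. Your proposed construction via the kernel of a map to $\Z/n\Z$ is unlikely to produce a free-by-cyclic group directly, and you correctly flag it as the ``main obstacle''; but this obstacle is not where the hypothesis $n\ge|w|$ is spent.

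The genuine gap is in the elimination of sub-mapping tori with fiber $V$ of rank at least $2$. Your suggestion --- that the two-generator hypothesis forces $V$ to be generated by a single $\psi$-orbit and that one can pass to a further finite-index subgroup ``using residual finiteness to kill the finitely many offending conjugacy classes'' --- does not work: there is no reason only finitely many such subgroups occur, and passing to a finite-index subgroup does not remove a fixed two-generator subgroup from consideration unless you can show it fails to lie in that subgroup, which you have no mechanism for. The paper's actual argument is where $n\ge|w|\ge 2$ enters: by Hruska--Wise~\cite{HW01}*{Theorem~1.2} this hypothesis makes $G$ locally quasiconvex, so the infinite finitely generated subgroup $V$ is quasiconvex in the hyperbolic group $G$ and hence has \emph{finite height} by~\cite{GMRS}. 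On the other hand, if $H'=\langle s,V\rangle$ is a sub-mapping torus with $sVs^{-1}\le V$, then the cosets $Vs^p$ ($p\ge 0$) are pairwise distinct (seen via the homomorphism $\theta\colon G_\phi\to\mathbb Z$ killing $F$) while $\bigcap_{i=0}^{p}s^{-i}Vs^{i}=V$ is infinite, so $V$ has infinite height --- a contradiction that disposes of \emph{all} sub-mapping tori at once, cyclic fiber included. Without this (or an equivalent) use of local quasiconvexity, your proof does not close.
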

Note that the subgroup $G_0 \leq G$ is free-by-cyclic as well. We also obtain a similar in spirit application for RFRS groups.

\begin{corol}\label{cor:RFRS}
	Suppose $G$ is a finitely generated RFRS group with $b_2^{(2)}(G)=0$ and $\mathrm{cd}_\mathbb{Q}(G)=2$. Then there is a finite index subgroup $G_0$ of $G$ such that every two generator subgroup $H$ of $G_0$ is free or the mapping torus of an endomorphism of a finitely generated free group.
\end{corol}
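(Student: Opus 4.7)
The plan is to combine a virtual algebraic fibration theorem for RFRS groups with Theorem~\ref{thm:A}. First, using that $G$ is finitely generated, RFRS, and has vanishing $L^2$-Betti numbers in the relevant range (in particular $b_2^{(2)}(G)=0$, with $b_1^{(2)}(G)=0$ available under our finiteness hypotheses), I would apply the virtual algebraic fibration theorem of Kielak, strengthened by Fisher, to produce a finite-index subgroup $G_0\le G$ together with a surjection $G_0\twoheadrightarrow \mathbb{Z}$ whose kernel $K$ is finitely generated. This exhibits $G_0$ as a semidirect product $G_0\cong K\rtimes_\phi\mathbb{Z}$.

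Next I would upgrade this to a mapping torus of a free group. Finite-index inclusion preserves rational cohomological dimension, so $\mathrm{cd}_\mathbb{Q}(G_0)=2$. Because $K$ is of type $FP$ over $\mathbb{Q}$ in the relevant degrees (this is what the $L^2$-Betti input buys us), the Lyndon--Hochschild--Serre spectral sequence for $1\to K\to G_0\to\mathbb{Z}\to 1$ gives $\mathrm{cd}_\mathbb{Q}(G_0)=\mathrm{cd}_\mathbb{Q}(K)+1$, so $\mathrm{cd}_\mathbb{Q}(K)\le 1$. After passing, if needed, to a torsion-free finite-index subgroup (available since RFRS groups are residually finite and our hypotheses transfer to $G_0$), the rational version of the Stallings--Swan theorem due to Dunwoody shows that a finitely generated torsion-free group of rational cohomological dimension at most $1$ is free. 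Hence $K$ is a finitely generated free group and $G_0\cong K\rtimes_\phi\mathbb{Z}$ is the mapping torus of an automorphism of a finite rank free group.

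The final step is to apply Theorem~\ref{thm:A} directly to $G_0$: every two-generator subgroup of $G_0$ is either free or conjugate in $G_0$ to a finitary sub-mapping torus. By the discussion preceding Theorem~\ref{thm:A}, a sub-mapping torus is exactly $G_\psi$ for an injective endomorphism $\psi$ of a finitely generated free group, which gives the conclusion. I expect the main obstacle to lie in the second step: ensuring that the hypothesis $b_2^{(2)}(G)=0$, together with $\mathrm{cd}_\mathbb{Q}(G)=2$ and RFRS, is strong enough to invoke the right version of the virtual fibration theorem with a sufficiently finite kernel, and that the rational cohomological dimension and torsion-freeness bookkeeping is arranged so that the Dunwoody--Stallings--Swan conclusion is legitimately available.
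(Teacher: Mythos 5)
There is a genuine gap in your second paragraph, and it starts already in the first. The hypothesis of the corollary is $b_2^{(2)}(G)=0$, not $b_1^{(2)}(G)=0$; your parenthetical claim that ``$b_1^{(2)}(G)=0$ is available under our finiteness hypotheses'' is not justified and is not true in general, so you cannot invoke Kielak's virtual fibration theorem to get a \emph{finitely generated} kernel $K$. The subsequent bookkeeping then also fails: the Lyndon--Hochschild--Serre argument gives the inequality $\mathrm{cd}_\mathbb{Q}(G_0)\le \mathrm{cd}_\mathbb{Q}(K)+1$, which yields $\mathrm{cd}_\mathbb{Q}(K)\ge 1$, i.e.\ the wrong direction; forcing $\mathrm{cd}_\mathbb{Q}(K)\le 1$ requires finiteness properties of $K$ (type $FP$ over $\mathbb{Q}$) that you have not established --- indeed, establishing exactly this drop in cohomological dimension for kernels of maps to $\mathbb{Z}$ is the content of Fisher's paper. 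The symptom of the problem is that your conclusion is too strong: the kernel need \emph{not} be a finite rank free group here, and the paper explicitly works with a free kernel of possibly infinite rank.

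The intended argument is shorter: Fisher's Theorem A applies verbatim to a finitely generated RFRS group with $b_2^{(2)}(G)=0$ and $\mathrm{cd}_\mathbb{Q}(G)=2$ and produces a finite-index subgroup $G_0\cong F\rtimes_\phi\mathbb{Z}$ with $F$ free of possibly infinite rank. Theorem~\ref{thm:A} is deliberately stated for free groups of arbitrary rank so that it applies to this $G_0$; a two-generator subgroup is then free or conjugate to a finitary sub-mapping torus, and by Lemma~\ref{lem:sub} a finitary sub-mapping torus is the mapping torus of an injective endomorphism of the finitely generated free group $V$ (conjugation does not affect this). Your final paragraph is correct; it is the production of $G_0$ that needs to be replaced by the citation of Fisher rather than the Kielak-plus-spectral-sequence route.
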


\subsection*{Acknowledgements}

We would like to thank the American Institute of Mathematics for the
hospitality during the October 2023 workshop ``Rigidity properties of
free-by-cyclic groups'' where this work started. We are also grateful to
Michael Kapovich for helpful discussions about 3-manifold groups, and to Chris
Leininger for discussion on pseudo-Anosov homeomorphisms and especially for
suggesting a specific proof of Lemma~\ref{lem:sub_pA}. Finally, we thank the
quick-opinion provider for suggesting we note Corollary~\ref{cor:distort} and
the anonymous referee for their careful reading and suggestions to improve the
exposition.

This work has received funding from the European Research Council (ERC) under the European Union's Horizon 2020 research and innovation programme (Grant agreement No. 850930).

I.~Kapovich acknowledges partial funding by the NSF grant DMS-1905641.

On behalf of all authors, the corresponding author, E. A. Bering, states that there is no conflict of interest.

\section{Labelled graphs and subgroups of free groups}

Let $F$ be a free group with a free basis $A$. We will utilize the standard
machinery of Stallings subgroup graphs and Stallings folds for working with
subgroups of $F=F(A)$. We direct the reader to standard references~\cites{St83,KM02} and only briefly recall some relevant notions here. An \emph{$A$-graph} is a graph (i.e. a 1-complex) $X$ where every oriented edge $e$ is given a label $\mu(e)\in A^{\pm 1}$ so that for every $e$ we have $\mu(e^{-1})=(\mu(e))^{-1}$. The \emph{$A$-rose} $R_A$ is an $A$-graph $R_A$ with a single vertex $v_0$ and one loop edge at $v_0$ labelled by $a\in A$ for each $a\in A$. For an $A$-graph $X$, the edge-labeling $\mu$ for $X$ defines a map $f_X:X\to R_A$ that send all vertices of $X$ to $v_0$ and maps each open $1$-cell of $X$ labelled by $a\in A$ homeomorphically, respecting orientation, to the open $1$-cell labelled by $a$ in $R_A$. An $A$-graph $X$ is \emph{tight} or \emph{folded} if the map $f_X: X\to R_A$ is an immersion; that is, if there do not exist two distinct oriented edges $e_1,e_2$ in $X$ with the same initial vertex $o(e_1)=o(e_2)$ and with $\mu(e_1)=\mu(e_2)$.

We naturally identify $F=F(A)=\pi_1(R_A,v_0)$.
If an $A$-graph $X$ has a base-vertex $\ast$, we denote by $X^\#$ the subgroup $$X^\#=(f_X)_\#(\pi_1(X,\ast))\le F=F(A).$$
Thus $X^\#$ consists of all the elements of $F=F(A)$ represented by labels of closed loops in $X$ at $\ast$.
Note that if $X$ is folded then $(f_X)_\#: \pi_1(X,\ast)\to X^\#$ is an isomorphism.

Suppose $X$ is an $A$-graph that is not tight and that $e_1,e_2$ are two distinct oriented edges in $X$ with $o(e_1)=o(e_2)$ and $\mu(e_1)=\mu(e_2)$. A \emph{Stallings fold} on the pair $e_1,e_2$ is a map $q:X\to X'$ that identifies $e_1,e_2$ into a single edge with label $\mu(e_1)=\mu(e_2)$, and identifies the terminal vertices $t(e_1)$, $t(e_2)$ if they were distinct in $X$.

\section{Standard subgroups of mapping torus groups and invariant graph pairs}\label{S:pairs}

\begin{conv}\label{conv:G}
For the remainder of this paper, unless specified otherwise we assume that $F$ is a free group, $\phi:F\to F$ is an injective endomorphism of $F$ and
\[
G_\phi=\langle F, t|\, t x t^{-1} =\phi(x), x\in F\rangle
\]
is the mapping torus group of $\phi$.
We also fix a free basis $A$ of $F$.

\end{conv}

\begin{df}\label{def:standard}
We say that a finitely generated subgroup $H\le G_\phi$ is \emph{standard} if $H=\langle t, V\rangle$ where $V\le F$ is a nontrivial finitely generated subgroup of $F$.
\end{df}

\begin{df}[Invariant graph pairs]
Let $H\le G_\phi$ be a standard finitely generated subgroup.
Let $Z$ be a finite connected $A$-graph with a base-vertex $\ast$ and let $X\subseteq Z$ be a connected subgraph containing $\ast$. We say that the pair $(Z,X)$ is an \emph{invariant graph pair for $H$} if the following conditions hold:
\begin{enumerate}
\item $H=\langle t, X^\#\rangle$;
\item $Z^\#=\langle X^\#, \phi(X^\#)\rangle$;
\item $Z^\#\le H\cap F$;
\end{enumerate}

\section{Relative complexity}

For an invariant graph pair $(Z,X)$, its \emph{relative rank} $\rr(Z,X)$ is defined as 
\[
\rr(Z,X)=\rank(\pi_1(Z))-\rank(\pi_1(X))=\chi(X)-\chi(Z).
\]
\end{df}

Let $(Z,X)$ be an invariant graph pair for $H$. Let $T\subseteq Z$ be a maximal subtree in $Z$ such that $T'=T\cap X$ is a maximal subtree in $X$. For each (oriented) edge $e$ of $Z-T$ labelled by a letter from the free basis $A$ of $F$ we get a loop $\gamma_e=[\ast, o(e)]_Te[t(e),\ast]_T$. The labels of these loops give us generating sets $\{c_1,\dots, c_k\}$ of $X^\#$ and $\{c_1,\dots, c_k, d_1,\dots, d_q\}$ of $Z^\#$. We call them the \emph{associated} generating sets of $X^\#$ and $Z^\#$ for $(Z,X)$. Note that $q=\rr(Z,X)$.

\begin{df}[Minimal relative complexity]
Let $H\le G_\phi$ be a standard finitely generated subgroup. Denote by $\rr_0(H)$ the minimum relative rank among all invariant graph pairs for $H$.
\end{df}

The following useful observation is an immediate consequence of the definitions:

\begin{lem}\label{lem:substandard}
Let $H\le G_\phi$ be a standard finitely generated subgroup. Suppose that $\rr_0(H)=0$. Then $H$ is a standard sub-mapping torus of $G_\phi$.
\end{lem}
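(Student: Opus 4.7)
My plan is to read off what the hypothesis $\rr_0(H)=0$ says directly from the associated generating set construction given just before the lemma statement. I expect this to be essentially a definition chase, with the only mildly substantive step being to verify that the candidate invariant subgroup is nontrivial.

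Concretely, I would fix an invariant graph pair $(Z,X)$ for $H$ realizing $\rr(Z,X)=\rr_0(H)=0$. Taking a maximal subtree $T'$ of $X$ and extending it to a maximal subtree $T$ of $Z$, the construction recalled just above the lemma produces associated generating sets $\{c_1,\dots,c_k\}$ for $X^\#$ and $\{c_1,\dots,c_k,d_1,\dots,d_q\}$ for $Z^\#$ with $q=\rr(Z,X)=0$. Hence the two generating sets coincide, and $V:=X^\#=Z^\#$ is a finitely generated subgroup of $F$. The $\phi$-invariance $\phi(V)\le V$ is then immediate from condition~(2) of the definition of invariant graph pair: combining $Z^\#=\langle X^\#,\phi(X^\#)\rangle$ with $X^\#=Z^\#$ gives $\phi(V)\subseteq V$, while condition~(1) yields $H=\langle t,X^\#\rangle=\langle t,V\rangle$.

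It remains to rule out $V=\{1\}$, so that $V$ satisfies the nontriviality requirement in the definition of a standard sub-mapping torus. Since $H$ is standard, by definition $H=\langle t,V_0\rangle$ for some nontrivial finitely generated $V_0\le F$, and in particular $H\cap F\supseteq V_0\neq\{1\}$. On the other hand, the canonical homomorphism $G_\phi\to\Z$ sending $F$ to $0$ and $t$ to $1$ shows $\langle t\rangle\cap F=\{1\}$. If $V$ were trivial, then $H=\langle t,V\rangle=\langle t\rangle$, forcing $H\cap F=\{1\}$, which contradicts $V_0\neq\{1\}$. Thus $V$ is nontrivial, finitely generated, and $\phi$-invariant, with $H=\langle t,V\rangle$, so $H$ is a standard sub-mapping torus of $G_\phi$. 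I do not foresee any real obstacle here; the lemma simply translates the combinatorial condition $\rr(Z,X)=0$ into the algebraic statement $\phi(V)\le V$ via the associated generating sets, which have already been set up.
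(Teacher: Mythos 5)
Your proof is correct and follows essentially the same route as the paper's: both arguments reduce $\rr(Z,X)=0$ to the identity $X^\#=Z^\#$ (the paper via the observation that $X\hookrightarrow Z$ is a homotopy equivalence, you via the associated generating sets with $q=0$), and then read off $\phi(X^\#)\le X^\#$ from condition~(2) of the definition. Your explicit verification that $X^\#$ is nontrivial is a small point the paper leaves implicit, and it is handled correctly.
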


\begin{proof}
Let $(Z,X)$ be an invariant graph pair for $H$ with $\rr(Z,X)=0$. Hence the inclusion of $X$ into $Z$ is a homotopy equivalence and we may make $Z$ smaller if needed and assume that $X=Z$.  Hence $Z^\#=\langle X^\#, \phi(X^\#)\rangle=X^\#$, and therefore $\phi(X^\#)\le X^\#$. Thus $H=\langle t,  X^\#\rangle$ is a standard sub-mapping torus of $G_\phi$, as claimed.
\end{proof}

Feighn and Handel define a \emph{tightening} process for reducing the relative rank of invariant graph pairs~\cite{FH99}.
Moves in this process are either ordinary Stallings folds or ``adding-of-loop" moves, where the latter can only occur after the so-called ``exceptional" folds. The relative rank is monotone non-increasing throughout the tightening process. The details of the tightening process are not important for our purposes, but we record here several key results about it.

\begin{prop}\label{prop:key}
Let $H\le G_\phi$ be a standard finitely generated subgroup.
\begin{enumerate}
	\item\label{prop:key-1} There exists a tight invariant graph pair $(Z,X)$ for $H$ with $\rr(Z,X)=\rr_0(H)$.
	\item\label{prop:key-2} Suppose that $(Z,X)$ is a tight invariant graph pair for $H$ with $\rr(Z,X)=\rr_0(H)$. Let $\{c_1,\dots, c_k\}$  and $\{c_1,\dots, c_k, d_1,\dots, d_q\}$ be associated generating sets of $X^\#$ and $Z^\#$ accordingly. Thus for $i=1,\dots, k$ there exists a word $w_i=w_i(c_1,\dots, c_k, d_1,\dots, d_q)$ such that $tc_it^{-1}=w_i$. 

Then the subgroup $H$ has the presentation
\[
H=\langle c_1,\dots, c_k, d_1,\dots, d_q, t| t c_it=w_i, i=1,\dots, k\rangle.
\] 

\end{enumerate}
\end{prop}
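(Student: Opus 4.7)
The plan is to prove (1) by directly invoking the Feighn--Handel tightening algorithm, and to prove (2) by recognizing $H$ as an HNN extension whose defining data is read off from the tight pair.

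For (1), by definition of $\rr_0(H)$ there exists an invariant graph pair $(Z_0,X_0)$ for $H$ with $\rr(Z_0,X_0)=\rr_0(H)$. I would then apply the Feighn--Handel tightening process to $(Z_0,X_0)$: each move (an ordinary Stallings fold, or an exceptional fold possibly followed by the associated adding-of-loop move) takes an invariant graph pair for $H$ to another invariant graph pair for $H$, and is monotone non-increasing in relative rank. Because we start at the minimum value $\rr_0(H)$, the relative rank remains constant equal to $\rr_0(H)$ throughout. The process terminates (the underlying complexity strictly decreases at each step), and its output is a tight pair $(Z,X)$ with $\rr(Z,X)=\rr_0(H)$.

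For (2), the plan is to use tightness of $(Z,X)$ to identify $H$ with an explicit HNN extension. Because $Z$ is folded, the label map $f_Z\colon Z\to R_A$ is an immersion, and so $(f_Z)_\#\colon \pi_1(Z,\ast)\to Z^\#$ is an isomorphism. Consequently $Z^\#$ is freely generated by $\{c_1,\ldots,c_k,d_1,\ldots,d_q\}$ and, restricting, $X^\#$ is freely generated by $\{c_1,\ldots,c_k\}$. Since $Z^\#=\langle X^\#,\phi(X^\#)\rangle$, for each $i$ there is a word $w_i$ in these generators with $\phi(c_i)=w_i$; as $\phi(X^\#)=tX^\#t^{-1}$ lies in $H$, the same is true for each $d_j\in Z^\#$, so all the proposed generators really lie in $H$, and the relations $tc_it^{-1}=w_i$ hold in $G_\phi$. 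Let $\widehat H$ denote the group presented by the displayed presentation. Since $\phi$ restricts to an injective homomorphism $X^\#\hookrightarrow Z^\#$, the presentation is exactly that of the HNN extension of the free group $Z^\#$ with stable letter $t$ and associated subgroups $X^\#$ and $\phi(X^\#)$, glued via $\phi|_{X^\#}$. Mapping $c_i,d_j,t$ to themselves in $G_\phi$ gives a surjection $\widehat H\twoheadrightarrow H$, and it remains to verify injectivity.

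The main obstacle is precisely this injectivity, and it is where the minimality of $\rr_0(H)$ must be used. The plan is to consider the Bass--Serre tree $T$ on which $G_\phi$ acts with vertex stabilizer $F$ and edge stabilizer $\phi(F)$, restrict the action to $H$, and identify the quotient $H\backslash T_H$ (where $T_H$ is the minimal $H$-invariant subtree) with the graph of groups prescribed by $\widehat H$. Concretely, I would argue that if $\widehat H\to H$ failed to be injective, or equivalently if $H\cap F$ strictly contained $Z^\#$, then one could enlarge $Z$ to incorporate the additional elements while leaving $X$ fixed, producing an invariant graph pair for $H$ of strictly smaller relative rank and contradicting minimality of $\rr_0(H)$. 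Once $H\cap F=Z^\#$ is established, the standard Bass--Serre normal-form argument for HNN extensions identifies $H$ with $\widehat H$, and the desired presentation follows.
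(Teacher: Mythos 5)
Your part~(1) is fine and is essentially the paper's argument: the paper simply cites Feighn--Handel's Proposition~5.4 (monotonicity of relative rank under the tightening process), which is exactly what you invoke.

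For part~(2) your framework is the right one --- read the tight pair as the data of an HNN extension of the free group $Z^\#$ over $X^\#\cong\phi(X^\#)$, get a surjection $\widehat H\twoheadrightarrow H$, and reduce everything to injectivity --- and this matches what Feighn and Handel do. But the step where minimality of $\rr_0(H)$ is actually used is where your proposal breaks down, and that is precisely the step the paper outsources to the proof of the Main Proposition in Section~6 of Feighn--Handel. Two concrete problems. First, the claimed equivalence ``$\widehat H\to H$ fails to be injective iff $H\cap F\supsetneq Z^\#$'' is unjustified and false as stated: even when the presentation is correct, $H\cap F$ will in general contain elements such as $t^{-1}gt=\phi^{-1}(g)$ for $g\in Z^\#\cap\phi(F)$ that need not lie in $Z^\#$. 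The obstruction coming from Britton's lemma applied to the ambient splitting of $G_\phi$ over $F\cong\phi(F)$ is a compatibility condition comparing $Z^\#\cap\phi(F)$ (and $\phi(Z^\#)\cap Z^\#$) with $\phi(X^\#)$ and $X^\#$, not $H\cap F$ with $Z^\#$; a word that is Britton-reduced in $\widehat H$ can still contain pinches $tgt^{-1}$ with $g\in Z^\#\smallsetminus X^\#$ when read in $G_\phi$, and ruling out the resulting collapses is the whole difficulty. Second, the proposed contradiction runs in the wrong direction: enlarging $Z$ while leaving $X$ fixed \emph{increases} $\rr(Z,X)=\rank(\pi_1(Z))-\rank(\pi_1(X))$. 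In the Feighn--Handel tightening the relative rank drops via the adding-of-loop move, which enlarges $X$, so the mechanism you describe cannot produce an invariant graph pair of strictly smaller relative rank. As written, the injectivity --- which is the entire content of part~(2) --- is not established; one must either reproduce the Section~6 argument of Feighn--Handel or, as the paper does, cite it.
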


\begin{proof}
These statements are reformulations or immediate consequences from Feighn and Handel's paper~\cite{FH99}, this proof refers to their results.

	Part (1) follows from \cite{FH99}*{Proposition~5.4}, which states that relative rank is monotone non-increasing under tightening.

	Part (2) is a restatement of the proof of the Main Proposition in \cite{FH99}*{Section 6}.
\end{proof}

We also need the following classic 1939 result of Magnus~\cite{Mag39}*{Satz 1}.\footnote{An English presentation can be found in Magnus, Karrass, and Solitar \cite{MKS}*{p. 354, Corollary~5.14.2}. Magnus' result can also be proved using homological methods~\cite{Stamm67} or representation variety techniques~\cite{Li97}}

\begin{prop}\label{p:Magnus}
Let $m\ge n\ge 0$ be integers and let $G=\langle a_1,\dots, a_m| r_1,\dots
	r_n\rangle$ be a group given by a presentation with $m$ generators and $n$ defining relators. Suppose also that $G$ can be generated by $m-n$ elements. Then $G$ is free of rank $m-n$. 
\end{prop}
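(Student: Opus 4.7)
The plan is to extract enough information from the given presentation to determine $G^{ab}$ and $H_2(G;\mathbb{Z})$ completely, and then to use residual nilpotence of free groups to upgrade the vanishing of $H_2$ into triviality of the kernel of a surjection $F_{m-n} \twoheadrightarrow G$.

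First I would read off $G^{ab}$ from the cellular chain complex of the presentation $2$-complex $K$ of $\langle a_1,\dots,a_m \mid r_1,\dots,r_n\rangle$. That complex is $\mathbb{Z}^n \xrightarrow{\partial_2} \mathbb{Z}^m \to 0$, so $G^{ab} = H_1(K;\mathbb{Z})$ has $\mathbb{Z}$-rank at least $m-n$ (the image of $\partial_2$ is generated by $n$ elements). On the other hand, since $G$ is generated by $m-n$ elements, $G^{ab}$ is too, bounding its rank above by $m-n$. A finitely generated abelian group whose minimum number of generators equals its $\mathbb{Z}$-rank must be free abelian, so $G^{ab} \cong \mathbb{Z}^{m-n}$. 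The same rank count forces $\partial_2$ to be injective, so $H_2(K;\mathbb{Z})=0$; combined with the standard surjection $H_2(K;\mathbb{Z}) \twoheadrightarrow H_2(G;\mathbb{Z})$ obtained by building a $K(G,1)$ from $K$ by attaching higher cells, this yields $H_2(G;\mathbb{Z})=0$.

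To conclude, I would choose a surjection $\psi \colon F_{m-n} \twoheadrightarrow G$ afforded by the generating set and set $R = \ker \psi$. The induced map $\mathbb{Z}^{m-n} \twoheadrightarrow \mathbb{Z}^{m-n}$ on abelianizations is automatically an isomorphism since finitely generated abelian groups are Hopfian, so $R \subseteq [F_{m-n},F_{m-n}]$. Hopf's formula then reads
\[
0 = H_2(G;\mathbb{Z}) = (R \cap [F_{m-n},F_{m-n}])/[F_{m-n},R] = R/[F_{m-n},R],
\]
so $R = [F_{m-n},R]$. An induction on the lower central series now gives $R \subseteq \gamma_k(F_{m-n})$ for every $k \ge 2$: one has $R \subseteq \gamma_2(F_{m-n})$ to start, and $R = [F_{m-n},R] \subseteq [F_{m-n},\gamma_k(F_{m-n})] = \gamma_{k+1}(F_{m-n})$ at each inductive step. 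Since free groups are residually nilpotent, $\bigcap_k \gamma_k(F_{m-n}) = 1$, forcing $R = 1$, so $\psi$ is an isomorphism and $G$ is free of rank $m-n$. The main obstacle is precisely this last step: converting the single homological vanishing $H_2(G;\mathbb{Z})=0$ into $R=1$. The decisive input is Hopf's formula, which recasts the vanishing as the fixed-point-style identity $R = [F_{m-n},R]$ on a subgroup of a free group, and this identity can then be bootstrapped against the lower central series using residual nilpotence; this is the only genuinely group-theoretic input beyond the homological bookkeeping.
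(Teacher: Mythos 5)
Your argument is correct, and every step checks out: the deficiency count forces $\operatorname{rank}_{\mathbb Z}G^{\mathrm{ab}}\ge m-n$ while the generation hypothesis forces $G^{\mathrm{ab}}\cong\mathbb Z^{m-n}$ (a finitely generated abelian group whose minimal number of generators equals its free rank has no torsion), the same count makes $\partial_2$ injective so $H_2(K;\mathbb Z)=0$ and hence $H_2(G;\mathbb Z)=0$ via Hopf's surjection, and then $R\subseteq[F_{m-n},F_{m-n}]$ together with Hopf's formula gives $R=[F_{m-n},R]$, which bootstraps down the lower central series. The paper does not prove this proposition at all: it is quoted as Magnus's Satz~1 from \cite{Mag39}, with a footnote pointing to the exposition in \cite{MKS} and noting that the result ``can also be proved using homological methods \cite{Stamm67}.'' Your proof is precisely that homological argument (essentially Stammbach's), as opposed to Magnus's original approach, which works directly with the lower central series quotients of the presentation. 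The two places where your write-up silently leans on nontrivial theorems are worth flagging if this were to be written out in full: the residual nilpotence of free groups, $\bigcap_k\gamma_k(F)=1$, is itself a theorem of Magnus and is the real engine converting $R=[F_{m-n},R]$ into $R=1$; and the identification $H_2(G;\mathbb Z)\cong(R\cap[F,F])/[F,R]$ is Hopf's formula, which requires $F$ free and $\psi$ surjective --- both of which you have. With those citations supplied, your proof is complete and self-contained, and arguably more transparent than chasing down the 1939 original.
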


\begin{cor}\label{cor:rank2} If $H$ s a standard subgroup of $G_\phi$ of the form $H=\langle t,v\rangle$, where $v\in F$, and
	such that $\rr_0(H)=1$, then $H$ is free of rank 2.
\end{cor}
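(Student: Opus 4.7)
The plan is to combine Proposition~\ref{prop:key} with Magnus's freiheitsatz-type result from Proposition~\ref{p:Magnus}. Since $\rr_0(H) = 1$, part~\eqref{prop:key-1} of Proposition~\ref{prop:key} furnishes a tight invariant graph pair $(Z,X)$ for $H$ with $\rr(Z,X) = 1$. This means that in the associated generating sets, the set $\{c_1,\dots,c_k\}$ generates $X^\#$ and the set $\{c_1,\dots,c_k,d_1\}$ generates $Z^\#$ (so $q = 1$).

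Next, I would invoke part~\eqref{prop:key-2} of Proposition~\ref{prop:key} to read off the presentation
\[
H = \langle c_1,\dots,c_k, d_1, t \mid t c_i t^{-1} = w_i,\ i = 1,\dots,k\rangle.
\]
This is a presentation on $m = k+2$ generators and $n = k$ relators, so $m - n = 2$.

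To apply Proposition~\ref{p:Magnus}, I need to verify that $H$ can actually be generated by $m - n = 2$ elements. But this is precisely the hypothesis of the corollary: $H = \langle t, v\rangle$ is by assumption generated by the two elements $t$ and $v \in F$. Therefore Proposition~\ref{p:Magnus} applies and yields that $H$ is free of rank~$2$.

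There is no real obstacle here: the content of the corollary is that the two-generator hypothesis on $H$ together with $\rr_0(H) = 1$ makes Magnus's deficiency-zero criterion directly applicable to the presentation supplied by the Feighn--Handel machinery. The only thing worth double-checking in the write-up is that the count of generators and relators in the presentation from Proposition~\ref{prop:key}\eqref{prop:key-2} matches $m - n = 2$ exactly when $\rr(Z,X) = 1$, which it does.
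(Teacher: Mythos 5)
Your proposal is correct and follows the paper's own proof essentially verbatim: obtain a tight invariant graph pair of relative rank $1$ via Proposition~\ref{prop:key}~(\ref{prop:key-1}), read off the presentation with $k+2$ generators and $k$ relators via Proposition~\ref{prop:key}~(\ref{prop:key-2}), and apply Magnus's theorem (Proposition~\ref{p:Magnus}) using the two-generator hypothesis. No gaps.
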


\begin{proof}
	By Proposition~\ref{prop:key}~(\ref{prop:key-1}) there is a tight invariant
	graph pair $(Z,X)$ for $H$ with $\rr(Z,X) = \rr_0(H) = 1$. The
	associated generating sets of $X^\#$ and $Z^\#$ have the form
	$\{c_1,\ldots, c_k \}$ and $\{c_1,\ldots, c_k, d_1\}$ where $k =
	\rank(\pi_1(X))$.
	Proposition~\ref{prop:key}~(\ref{prop:key-2}) then implies that there exists
	words $w_i = w_i(c_1,\ldots c_k,d_1)$ where $tc_it^{-1} = w_i$ and $H$ has the presentation
\[
H=\langle t, c_1,\dots, c_k, d_1| t c_it=w_i, i=1,\dots, k\rangle.
\]
	This presentation has $k+2$ generators and $k$ defining relators. By assumption $H=\langle t,v\rangle$ is two-generated. Therefore, by Proposition~\ref{p:Magnus}, $H$ is free of rank 2.
\end{proof}

\begin{prop}\label{prop:standard}
Let $1\ne v\in F$ and $H=\langle t,v\rangle \le G_\phi$. Then either $H$ is free of rank 2 or $H$ is a standard sub-mapping torus of $G_\phi$.
\end{prop}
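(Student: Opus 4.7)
My plan is a trichotomy on the relative complexity $\rr_0(H)$. Since $v \ne 1$, the cyclic subgroup $V = \langle v \rangle \le F$ is nontrivial and finitely generated, so $H = \langle t, v \rangle = \langle t, V \rangle$ is a standard finitely generated subgroup of $G_\phi$ in the sense of Definition~\ref{def:standard}. Consequently $\rr_0(H)$ is defined, and the argument splits into three cases according to its value.

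The two easy cases are already built into the machinery set up above. If $\rr_0(H) = 0$, then Lemma~\ref{lem:substandard} immediately gives that $H$ is a standard sub-mapping torus of $G_\phi$, which is one of the two desired conclusions. If $\rr_0(H) = 1$, then Corollary~\ref{cor:rank2} (which was tailor-made for exactly this situation) yields that $H$ is free of rank~$2$, the other desired conclusion.

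The only step that actually uses the two-generator hypothesis in a nontrivial way is the exclusion of the case $\rr_0(H) \ge 2$, and I expect this to be the main (albeit short) obstacle. For this I would invoke Proposition~\ref{prop:key}(\ref{prop:key-1}) to obtain a tight invariant graph pair $(Z,X)$ for $H$ of relative rank $q = \rr_0(H) \ge 2$, and then use Proposition~\ref{prop:key}(\ref{prop:key-2}) to write down a presentation of $H$ with $m = k + q + 1$ generators (the $c_i$, $d_j$, and $t$) and $n = k$ defining relators. Since $H = \langle t, v\rangle$ is two-generated, it is \emph{a fortiori} generated by $m - n = q + 1 \ge 3$ elements, so Magnus's Proposition~\ref{p:Magnus} forces $H$ to be free of rank $q + 1 \ge 3$. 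This is incompatible with $H$ being $2$-generated, giving the desired contradiction and ruling out $\rr_0(H) \ge 2$. The trichotomy is then complete.
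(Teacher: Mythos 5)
Your proof is correct, and the handling of the cases $\rr_0(H)=0$ and $\rr_0(H)=1$ coincides with the paper's. Where you diverge is the one step specific to this proposition: bounding $\rr_0(H)$. The paper does not argue by contradiction at all; it simply exhibits an invariant graph pair of relative rank $1$ --- take $X'$ to be a loop at $\ast$ labelled by the reduced form of $v$ and $Z'$ the wedge of $X'$ with a loop labelled by the reduced form of $\phi(v)$ --- which immediately gives $\rr_0(H)\le 1$ and, as a by-product, shows that invariant graph pairs for $H$ exist at all (a point your write-up takes for granted when you assert that $\rr_0(H)$ ``is defined,'' though it is implicit in Proposition~\ref{prop:key}(\ref{prop:key-1})). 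Your alternative exclusion of $\rr_0(H)\ge 2$ via Magnus is sound: a tight minimal pair yields a presentation of deficiency $q+1\ge 3$, a two-generated group is \emph{a fortiori} $(q+1)$-generated, so Proposition~\ref{p:Magnus} would force $H$ to be free of rank $q+1>2$, which is incompatible with two-generation. This is essentially the argument of Corollary~\ref{cor:rank2} pushed one step further, so it buys uniformity across all values of $\rr_0(H)$ at the cost of re-invoking the Feighn--Handel presentation and Magnus where the paper needs only a two-line explicit construction. Both routes are valid; the paper's is the more economical and constructive one for this particular step.
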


\begin{proof}
Let $X'$ be a loop at $\ast$ labelled by the freely reduced form of $v$ and let $Z'$ be obtained from $X'$ by wedging it at $\ast$ with a loop labelled by the freely reduced form of $\phi(v)$ in $F$. Then $(Z',X')$ is an invariant graph pair for $H$ with $\rr(Z',X')=1$. Therefore $\rr_0(H)\le 1$.

If $\rr_0(H)=0$, then by Lemma~\ref{lem:substandard} $H$ is a standard sub-mapping torus of $G_\phi$.
If $\rr_0(H)=1$, then because $H=\langle t,v\rangle $ is a two-generated standard subgroup of $G_\phi$ by hypothesis, Corollary~\ref{cor:rank2} implies $H$ is free of rank 2.
\end{proof}

\section{Subgroups of mapping torus groups}

Recall that in Convention~\ref{conv:G} we have fixed a mapping torus group
$G_\phi$ for an injective endomorphism $\phi:F\to F$. First we record the following standard consequences of the definition of $G_\phi$:

\begin{lem}\label{lem:m} With $G_\phi$ and $\phi:F\to F$ as in
	Convention~\ref{conv:G},
\begin{enumerate}
\item For any $u\in F$ and $s=ut$ the group $G_\phi=\langle F, s\rangle$ has the presentation
\[
G_\phi=\langle F, s|\, s x s^{-1} =\xi(x), x\in F\rangle
\]
where $\xi:F\to F$ is an injective endomorphism given by $\xi(x)=u\phi(x)u^{-1}$ for all $x\in F$.
\item For any integer $m\ge 1$ the subgroup $\langle t^m, F\rangle$ has index $m$ in $G_\phi$ and with $s=t^m$ this subgroup has the presentation
\[
\langle F, s|\, s x s^{-1} =\phi^m(x), x\in F\rangle\cong G_{\phi^m}
\]
\item\label{lem:m-3} For any $u\in F$, $m\ge 1$ the subgroup $\langle ut^m, F\rangle=\langle t^m, F\rangle$ has index $m$ in $G_\phi$ and with $s=ut^m$ this subgroup has the presentation
\[
\langle F, s|\, s x s^{-1} =\psi(x), x\in F\rangle.
\] where $\psi:F\to F$ is an injective endomorphism given by $\psi(x)=u\phi^m(x)u^{-1}$.
\end{enumerate}
\end{lem}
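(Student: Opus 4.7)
The plan is to verify each of the three parts by direct manipulation of the presentation of $G_\phi$, using Tietze transformations together with standard facts about (ascending) HNN extensions.

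For part (1), I would substitute $s = ut$ (equivalently $t = u^{-1}s$) in the defining relation $txt^{-1}=\phi(x)$; this rearranges to $sxs^{-1} = u\phi(x)u^{-1} = \xi(x)$. Since $u \in F$, the generating set $F \cup \{s\}$ coincides with $F \cup \{t\}$, so the substitution is a valid Tietze equivalence and yields the claimed presentation. Injectivity of $\xi$ is inherited from that of $\phi$ via the bijectivity of conjugation by $u$.

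For part (2), I would first establish that $K := \langle F, t^m\rangle$ is normal of index $m$, and then identify its intrinsic presentation. Normality: we have $tFt^{-1} = \phi(F) \subseteq F \subseteq K$, and iterating $txt^{-1} = \phi(x)$ gives the identity $t^{-1}xt = t^{-m}\phi^{m-1}(x)t^m \in K$ for all $x \in F$, so $t^{\pm 1}Kt^{\mp 1} \subseteq K$. Thus $G_\phi/K$ is cyclic, generated by $tK$ of order dividing $m$. On the other hand, the assignment $F \mapsto 0$, $t \mapsto 1$ extends to a well-defined surjection $\pi: G_\phi \twoheadrightarrow \mathbb{Z}/m\mathbb{Z}$ (the defining relations are homogeneous of $t$-degree zero), and its kernel contains $K$, so $[G_\phi:K] \ge m$; combined, $[G_\phi:K] = m$. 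For the presentation, the universal property of HNN extensions produces a surjective homomorphism $\alpha: G_{\phi^m} \twoheadrightarrow K$ sending $x \mapsto x$ for $x\in F$ and $s \mapsto t^m$, since $t^m x t^{-m} = \phi^m(x)$ in $G_\phi$. Injectivity of $\alpha$ follows by Britton's lemma: substituting $s = t^m$ turns a Britton-reduced word $v_0 s^{-1}v_1\cdots s^{-1}v_a\,s\,v_{a+1}\cdots s\,v_{a+b}$ (with $v_a\notin\phi^m(F)$ when $a,b>0$) into a word in $G_\phi$ whose only possible pinch lies at the $-,+$ transition $t^{-m}v_a t^m$; iteratively reducing this transition via $t^{-m}v_a t^m = t^{-(m-k)}\phi^{-k}(v_a)t^{m-k}$ for $k$ maximal with $v_a \in \phi^k(F)$ (necessarily $k<m$) yields a Britton-reduced and hence nontrivial word in $G_\phi$.

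For part (3), I would combine parts (1) and (2). Since $u \in F \subseteq K$, we have $\langle F, ut^m\rangle = \langle F, t^m\rangle = K$, which by part (2) has index $m$ and is presented as $G_{\phi^m}$ with stable letter $t^m$. Applying part (1) inside the subgroup $K$, with $\phi^m$ in place of $\phi$ and $s = u\cdot t^m$, replaces the conjugation relation by $sxs^{-1} = u\phi^m(x)u^{-1} = \psi(x)$, yielding the stated presentation. The only delicate step in the whole argument is the injectivity of $\alpha$ in part (2); this is a routine application of Britton's lemma for ascending HNN extensions and presents no conceptual difficulty.
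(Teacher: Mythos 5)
Your proof is correct and follows essentially the same route as the paper: a Tietze transformation for part (1), the homomorphism $G_\phi\to\mathbb Z/m\mathbb Z$ killing $F$ to identify $\langle t^m,F\rangle$ as an index-$m$ subgroup, and part (3) obtained by combining the first two. The only difference is one of detail: where the paper simply asserts the structural identification $\langle t^m,F\rangle\cong G_{\phi^m}$ via the normal form for (ascending) HNN extensions, you verify injectivity of the induced map explicitly with Britton's lemma, which fills in a step the paper leaves implicit.
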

\begin{proof}
Part (1) follows by performing a Tietze transformation (introducing $s$, then eliminating $t$) on the presentation of $G_\phi$ from Convention~\ref{conv:G}.

For part (2), note that for $m\ge 1$ we have $\langle t^m, F\rangle=\ker \theta$ where $\theta: G_\phi\to \mathbb Z_m$ is the epimorphism defined as $\theta|_F=0$ and $\theta(t)=[1]_m\in \mathbb Z_m$. Hence $\langle t^m, F\rangle$ is a subgroup of index $m$ in $G_\phi$. Moreover, $F\triangleleft G_\phi$ and $t^mxt^{-m}=\phi^m(x)$ for all $x\in F$, which implies that inside $G_\phi=F\rtimes_\phi \langle t\rangle$ we have $\langle t^m, F\rangle=F\rtimes_{\phi^m} \langle t^m\rangle$, and part (2) follows.

Part (3) now follows from part (2) by a Tietze transformation.
\end{proof}

\begin{lem}\label{lem:sub}
Let $H=\langle ut^m, V\rangle\le G_\phi$ be a sub-mapping torus subgroup of $G_\phi$, where $u\in F$, $m\ge 1$ and $V\le F$ is a nontrivial finitely generated subgroup such that $\phi(V)\le V$. Then with $s=ut^m$ the subgroup $H$ has the presentation
\[
H=\langle s, V| sys^{-1}=\psi(y), y\in V\rangle
\]
where $\psi:V\to V$ is an injective endomorphism defined as $\psi(x)=u\phi^m(x)u^{-1}$.
\end{lem}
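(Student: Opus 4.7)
The plan is to recognize $H$ as the ascending HNN extension of $V$ with stable letter $s$ and associated injective endomorphism $\psi|_V$; the target presentation is then its standard HNN presentation, so the work reduces to showing that $H$ has this structure inside $G_\phi$.

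First, applying Lemma~\ref{lem:m}(3) to $s = ut^m$ identifies the subgroup $K := \langle F, s\rangle \le G_\phi$ with the ascending HNN extension $\langle F, s \mid sxs^{-1}=\psi(x),\ x\in F\rangle$, where $\psi(x)=u\phi^m(x)u^{-1}$. Since $\phi$ is injective, so is $\phi^m$, and hence $\psi$. The defining sub-mapping torus condition $sVs^{-1}\le V$ becomes, inside $K$, exactly $\psi(V)\le V$, so $\psi$ restricts to an injective endomorphism $\psi|_V : V \to V$ and $H = \langle s, V\rangle \le K$.

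Next, let
\[
G' = \langle s', V \mid s' y s'^{-1} = \psi(y),\ y \in V\rangle
\]
be the abstract ascending HNN extension of $V$ via $\psi|_V$. Because these defining relations are satisfied in $K$ under $s' \mapsto s$ and the inclusion $V \hookrightarrow F$, the universal property of a group presentation yields a homomorphism $\eta\colon G' \to K$ whose image is $\langle s, V\rangle = H$. Surjectivity of $\eta$ onto $H$ is immediate, so the lemma will follow once $\eta$ is shown to be injective.

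For injectivity I would use the standard description of an ascending HNN extension as a semidirect product of a direct-limit group with $\mathbb{Z}$. Namely, the chain $F \le s^{-1}Fs \le s^{-2}Fs^2\le \cdots$ has union $F_\infty \cong \varinjlim(F \xrightarrow{\psi} F \xrightarrow{\psi} \cdots)$, and $K \cong F_\infty \rtimes \mathbb{Z}$ with $\langle s\rangle$ acting as the shift automorphism of $F_\infty$; the analogous identification $G' \cong V_\infty \rtimes \mathbb{Z}$ holds for $V_\infty := \varinjlim(V \xrightarrow{\psi|_V} V \to \cdots)$. The inclusion $V \hookrightarrow F$ intertwines $\psi|_V$ with $\psi$, so it induces an injection $V_\infty \hookrightarrow F_\infty$ of direct limits compatible with the $\mathbb{Z}$-actions, and $\eta$ is the resulting injection of semidirect products. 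The main (in fact the only) obstacle is this injectivity step; it is classical and equivalent to a special case of Britton's lemma for ascending HNN extensions, so the proof would appeal to a standard reference rather than redo the normal-form analysis by hand.
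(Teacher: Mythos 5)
Your argument is correct, and its overall shape matches the paper's: both start from the abstractly presented ascending HNN extension of $V$, map it onto $H$ via the universal property, and reduce the lemma to injectivity of that map. Where you diverge is in how injectivity is established. The paper argues directly from the normal form: every element of the presented group can be written as $s^{-p}ys^{q}$ with $y\in V$ and $p,q\ge 0$, so every nontrivial element is conjugate either to some $1\ne y\in V$ or to $ys^{n}$ with $n\ne 0$, and both kinds of elements visibly survive in $G_\phi$ (the latter because the retraction $G_\phi\to\mathbb Z$ sends $ys^{n}$ to $nm\ne 0$). You instead invoke the classical identification of an ascending HNN extension with $\varinjlim{}\rtimes\mathbb Z$ for both $G'$ and $K=\langle F,s\rangle$, and observe that the inclusion $V\hookrightarrow F$ intertwines $\psi|_V$ with $\psi$, hence induces an injection $V_\infty\hookrightarrow F_\infty$ compatible with the shift actions, from which injectivity of $\eta$ follows. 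Both routes ultimately rest on the same normal-form fact (Britton's lemma for ascending HNN extensions): the paper's version is more elementary and self-contained, while yours packages the bookkeeping into a functoriality statement for the direct-limit description, which is cleaner and generalizes without change to non-finitely-generated $V$. Either is a complete proof.
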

\begin{proof}
Consider a group $K$ given by the presentation $K=\langle s, V| sys^{-1}=\psi(y), y\in V\rangle$. Then there is a natural homomorphism $\alpha: K\to G_\phi$, $\alpha(s)=s$, $\alpha(y)=y$ for $y\in V$ with $\alpha(K)=H$. We claim that $\alpha$ is injective.

Indeed, it is not hard to see that every element of $K$ has the form $s^{-p}ys^q$ for some $p,q\ge 0$ and $y\in V$. Hence every nontrivial element of $K$ is conjugate in $K$ to either some $1\ne y\in V$ or to $ys^n$ with $y\in V$ and $n\ne 0$. If $1\ne y\in V$ then $\alpha(y)=y\ne 1$ in $G_\phi$. Similarly, if $y\in V$ and $n\ne 0$ then $\alpha(ys^n)=ys^n\ne 1$ in $G_\phi$. Therefore $\alpha$ is injective, as claimed.

Hence $\alpha:K\to H$ is an isomorphism and the statement of the lemma follows.
\end{proof}

\begin{prop}\label{prop:sub-fbc}
Let $H=\langle s, V\rangle\le G_\phi$ be a sub-mapping torus subgroup of $G_\phi$, where $u\in F$, $m\ge 1$, $s=ut^m$, and $V\le F$ is a nontrivial finitely generated subgroup such that $\phi(V)\le V$. Let $\psi:V\to V$ be an injective endomorphism of $V$ defined as in Lemma~\ref{lem:sub}, that is $\psi(x)=u\phi^m(x)u^{-1}$ for $x\in V$.

Suppose that $\phi:F\to F$ is an automorphism of $F$. 

Then $sVs^{-1}=V$, $\psi$ is an automorphism of $V$ and $H=V\rtimes_\psi \langle s\rangle$ is a free-by-cyclic group.

\end{prop}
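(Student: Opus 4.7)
The plan is to establish that $sVs^{-1}=V$; once this equality holds, $\psi:V\to V$ becomes surjective (it is already injective by hypothesis), so $\psi\in\Aut(V)$. Then $V$ is normal in $H=\langle s,V\rangle$, the presentation for $H$ furnished by Lemma~\ref{lem:sub} becomes the standard semidirect-product presentation of $V\rtimes_\psi\langle s\rangle$, and this is a free-by-cyclic group since $V$ is a finitely generated free group.

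The first step is to use the hypothesis $\phi\in\Aut(F)$ to extend $\psi$ to an automorphism of the ambient group $F$: define $\tilde\psi:F\to F$ by $\tilde\psi(x)=u\phi^m(x)u^{-1}$. As the composition of the automorphism $\phi^m$ of $F$ with the inner automorphism $x\mapsto uxu^{-1}$, we have $\tilde\psi\in\Aut(F)$, and $\tilde\psi|_V=\psi$. The sub-mapping torus hypothesis $sVs^{-1}\le V$ translates to $\tilde\psi(V)\le V$. Applying the automorphism $\tilde\psi^{-1}$ iteratively produces an ascending chain
\[
V\le\tilde\psi^{-1}(V)\le\tilde\psi^{-2}(V)\le\cdots
\]
of subgroups of $F$. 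Since each $\tilde\psi^{-k}$ is an automorphism of $F$, every term has rank equal to $\rank V<\infty$.

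Next I would invoke Takahasi's classical 1951 theorem that every ascending chain of subgroups of bounded finite rank in a free group stabilizes. Thus $\tilde\psi^{-K}(V)=\tilde\psi^{-(K+1)}(V)$ for some $K\ge 0$; applying the automorphism $\tilde\psi^K$ to both sides gives $V=\tilde\psi^{-1}(V)$, equivalently $\tilde\psi(V)=V$, whence $sVs^{-1}=V$. The remaining conclusions are routine verifications: $\psi$ is an automorphism of $V$; the element $s$ projects to $m\ne 0$ under the canonical map $G_\phi\to G_\phi/F\cong\mathbb Z$, so $sV$ has infinite order in $H/V$ and $H/V\cong\mathbb Z$; and the Lemma~\ref{lem:sub} presentation then identifies $H$ with the split extension $V\rtimes_\psi\langle s\rangle$.

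The principal obstacle is the chain-stabilization step, which rests on Takahasi's theorem (alternatively provable via a direct Stallings-graph argument valid in free groups of arbitrary rank, matching the fact that $F$ in Convention~\ref{conv:G} need not be finitely generated). The hypothesis that $\phi$ is an automorphism is essential in two distinct ways: it is needed to define $\tilde\psi^{-1}$ and produce the ascending chain inside $F$, and it ensures that every term of the chain has the same finite rank as $V$. Without invertibility of $\phi$, the injective endomorphism $\psi$ may genuinely fail to be surjective, as witnessed by the Baumslag--Solitar sub-mapping tori discussed after Theorem~\ref{thm:A}.
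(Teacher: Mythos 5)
Your proposal is correct and follows essentially the same route as the paper: both arguments observe that conjugation by $s$ extends to an automorphism of $F$, form the chain $V\le s^{-1}Vs\le s^{-2}Vs^2\le\cdots$ of subgroups of fixed finite rank, and invoke the Takahasi (Higman--Takahasi) theorem on ascending chains of bounded rank to conclude $sVs^{-1}=V$, after which the remaining claims follow from Lemma~\ref{lem:sub}. The only cosmetic difference is that the paper phrases the chain step as a contradiction (a strictly ascending infinite chain is impossible) while you phrase it as stabilization followed by applying the automorphism backwards; these are equivalent.
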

\begin{proof}
Since $H$ is a sub-mapping torus of $G_\phi$, we have $sVs^{-1}\le V$. Conjugation by $s$ is an automorphism $\phi$ of $F$. Thus if $sVs^{-1}\le V$ is a proper subgroup of $V$ then
\[
V\lneq s^{-1}Vs\lneq s^{-2}Vs^2\lneq \dots \lneq s^{-n}Vs^n\lneq \dots
\]
is an infinite strictly ascending chain of subgroups of $F$ of fixed finite rank. This is impossible by the Higman-Takahasi Theorem (e.g. see \cite[Theorem~14.1]{KM02}). Thus $sVs^{-1}=V$ and the statement of the proposition follows from Lemma~\ref{lem:sub}. 
\end{proof}

\begin{reptheor}{thm:A}

Let $F$ be a free group, let $\phi:F\to F$ be an injective endomorphism of $F$ and let $G_\phi$
be the mapping torus group of $\phi$.

Then for every two-generator subgroup $H\le G_\phi$ either $H$ is free or $H$ is conjugate to a sub-mapping torus subgroup of $G_\phi$.
\end{reptheor}

\begin{proof}
Let $H=\langle x,y\rangle\le G_\phi$ be a two-generator subgroup of $G$. Recall that every element of $G_\phi$ has the form $t^{-p}zt^q$ for some $z\in F$ and $p,q\ge 0$. Let $x=t^{-p_1}z_1t^{q_1}$ and $y=t^{-p_2}z_2t^{q_2}$ as above. By inverting $x,y$ if necessary we may assume that $q_1\ge p_1\ge 0$ and $q_2\ge p_2\ge 0$. Put $n=\max\{q_1,q_2\}$ and let $x'=t^nxt^{-n}, y'=t^nyt^{-n}$.  Then $x'=v_1t^{n_1}, y'=v_2t^{n_2}$ where $v_1,v_2\in F$ and $n_1,n_2\ge 0$. More precisely, $n_i=q_i-p_i$ and $v_i=t^{n-p_i}z_it^{-(n-p_i)}$ for $i=1,2$. 
Moreover $H'=\langle x',y'\rangle =t^nHt^{-n}$ is conjugate to $H$ in $G_\phi$. If $n_1=n_2=0$ then $H'=\langle v_1,v_2\rangle\le F$. Therefore $H'$ is free and hence $H$ is free as well.

Thus we may assume that $n_1+n_2>0$. We may also assume that $n_1>0$ and $n_1\ge n_2$. We can then perform the Euclidean algorithm on the exponents $n_1, n_2$ of $t$ in the generators $x',y'$ and transform the generating set $x',y'$ of $H'$ to a generating set $H'=\langle ut^m, v\rangle$ where $u, v\in F$ and where $m\ge 1, m=gcd(n_1,n_2)$. If $v=1$ then $H'$ is infinite cyclic and therefore free, as required. Thus we may assume that $v\ne 1$.

	Put $s=ut^m$. By Lemma~\ref{lem:m}~(\ref{lem:m-3}), the subgroup $\langle s,F\rangle$ of $G_\phi$ has index $m$ in $G_\phi$ and has the presentation

\[
G_{\psi}=\langle F, s|\, s x s^{-1} =\psi(x), x\in F\rangle.
\]
where $\psi: F\to F$ is given by $\psi(x)=u\phi^m(x)u^{-1}$ for all $x\in F$.
	Now $H'=\langle s, v\rangle\le G_{\psi}$ is a standard two-generator
	subgroup of $G_{\psi}$ with $v\ne 1$. Hence, by
	Proposition~\ref{prop:standard}, either $H'$ is free of rank 2 or $H$
	is a standard sub-mapping torus of $G_{\psi}$.

If $H'$ is free of rank 2 then $H\cong H'$ is free of rank 2 as well.
If $H'$ is a standard sub-mapping torus of $G_{\psi}$ then $H'$ is a sub-mapping torus of $G_\phi$ and $H$ is conjugate to a sub-mapping torus of $G_\phi$, as required.
\end{proof}

\section{Application to free-by-cyclic groups}

When $G_\phi$ is the mapping torus group of an automorphism of a rank $r$ free group $F_r$, more structure is available. First we observe an immediate corollary on the distortion of a two-generator subgroup.

\begin{repcorol}{cor:distort}
	Suppose $G_\phi$ is a free-by-cyclic group $F_r \rtimes_\phi \Z$. Then
	a two-generator subgroup of $G_\phi$ is either free of rank 2, or
	an undistorted (f.g. free)-by-cyclic sub-mapping torus subgroup of $G_\phi$.
\end{repcorol}

\begin{proof}
	Suppose $H \leq G_\phi$ is two-generated and not isomorphic to $F_2$.
	Then by Theorem~\ref{thm:A} it is either cyclic or conjugate to a
	sub-mapping torus subgroup of $H$. In the second case, the sub-mapping
	torus subgroup $\langle V,s \rangle$ is itself a (f.g. free)-by-cyclic group: since $s$ induces an automorphism of $F_r$, in fact we have $sVs^{-1}=V$, see Proposition~\ref{prop:sub-fbc} above. 
	
	It then follows from work of Mutanguha (\cite{Mut}*{Lemma 4.1} in the sub-mapping torus case, and \cite{Mut}*{Lemma 4.2} in the cyclic case) that $H$ is undistorted in $G_\phi$.
\end{proof}

Next we also impose conditions on the automorphism $\phi$. Recall that an outer automorphism $\phi\in\Out(F_r)$ is \emph{fully irreducible} if no conjugacy class of a proper free factor is $\phi$-periodic, and \emph{atoroidal} if no conjugacy class of a nontrivial element is $\phi$-periodic. Note that these properties are invariant under taking nonzero powers. The possible sub-mapping torus groups are highly constrained by the structure of fully irreducible outer automorphisms.

\begin{lem} \label{lem:invariant-fi}
	Suppose $F_r$ is a free group of finite rank $r\ge 2$ and let $\phi\in\Out(F_r)$ be a fully irreducible outer automorphism. Let $V\le F_r$ be a nontrivial finitely generated subgroup such that $\phi(V)\subseteq V$ and such that $V\ne \langle u\rangle$ where the conjugacy class $[u]$ is $\phi$-periodic. Then $V$ is of finite index in $F_r$.
\end{lem}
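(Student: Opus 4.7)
The plan is to first upgrade the hypothesis $\phi(V)\subseteq V$ to the equality $\phi(V)=V$, and then case-split on $\rank(V)$, with the rank-$1$ case matching the excluded alternative in the hypothesis, and fully irreducibility of $\phi$ forcing finite index in the rank-$\ge 2$ case.

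For the first step I would argue exactly as in Proposition~\ref{prop:sub-fbc}: since $\phi\in\Aut(F_r)$, the preimage $\phi^{-1}(V)$ has the same rank as $V$ (because $\phi$ restricts to a bijection $\phi^{-1}(V)\to V$), and if $\phi(V)\subsetneq V$ were strict, iterating $\phi^{-1}$ produces a strictly ascending chain $V\subsetneq\phi^{-1}(V)\subsetneq\phi^{-2}(V)\subsetneq\cdots$ of finitely generated subgroups of $F_r$ of fixed rank, contradicting the Higman--Takahasi theorem. Hence $\phi(V)=V$ and $\phi|_V\in\Aut(V)$.

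Next I would dispose of the rank-$1$ case: if $V=\langle u\rangle$, then $\phi|_V\in\Aut(V)\cong\{\pm 1\}$ forces $\phi(u)=u^{\pm 1}$, so that $[u]$ is $\phi$-periodic of period at most~$2$. This is precisely the alternative excluded by the hypothesis, so $\rank(V)\ge 2$.

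The crux, which I expect to be the main obstacle, is showing that any $\phi$-invariant finitely generated subgroup $V\le F_r$ of rank $\ge 2$ must be of finite index in $F_r$. My approach would be via free factor systems: since $\phi(V)=V$, the free factor support of $V$ in $F_r$ -- the smallest free factor system of $F_r$ one of whose factors contains a conjugate of $V$ -- is $\phi$-invariant. Fully irreducibility of $\phi$ means that the only $\phi$-periodic free factor systems of $F_r$ are the trivial one and $\{[F_r]\}$, so the support must be $\{[F_r]\}$; that is, $V$ is contained in no proper free factor of $F_r$. To upgrade this ``filling'' property to finite index -- the main obstruction -- I would invoke the structural results of Mutanguha~\cite{Mut} on $\phi$-invariant subgroups of fully irreducible outer automorphisms, which preclude infinite-index rank-$\ge 2$ $\phi$-invariant subgroups once the free factor support is all of $F_r$. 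Combining this with the previous steps yields that $V$ has finite index in $F_r$, as required.
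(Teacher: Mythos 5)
Your reductions are fine as far as they go: the Higman--Takahasi argument does upgrade $\phi(V)\subseteq V$ to $\phi(V)=V$, the rank-one case is exactly the alternative excluded by the hypothesis, and the free factor support argument correctly shows that a nontrivial $\phi$-invariant $V$ cannot lie in a conjugate of a proper free factor. But none of this reaches the conclusion, and the one step that would --- passing from ``$V$ fills'' to ``$V$ has finite index'' --- is not proved. As you note yourself, filling is far weaker than finite index: for instance, a rank-two finitely generated subgroup of the commutator subgroup of $F_2$ containing $[a,b]$ fills and has infinite index, so invariance under $\phi$ must be used again at this point, not just once to determine the free factor support. You close the gap by appealing to unspecified ``structural results of Mutanguha~\cite{Mut}.'' That reference concerns polynomially growing automorphisms (the present paper cites it only for undistortedness of sub-mapping tori), whereas fully irreducible automorphisms are exponentially growing, so it does not contain the statement you need in the form you need it. In effect the crux of the lemma is outsourced to a citation that does not deliver it; this is a genuine gap.

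The paper closes the same gap with a concrete argument from Bestvina--Feighn--Handel's lamination theory: since $V\ne\langle u\rangle$ with $[u]$ $\phi$-periodic, $V$ contains an element $v$ whose conjugacy class is not $\phi$-periodic, and all forward iterates $\phi^n(v)$ lie in $V$ (only $\phi(V)\subseteq V$ is used, so your first reduction is not needed). These iterates produce a leaf of the attracting lamination $\Lambda^+_\phi$ carried by $V$ in the sense of \cite{BFH97}*{Definition~2.2}, and \cite{BFH97}*{Proposition~2.4} states that a finitely generated subgroup carrying $\Lambda^+_\phi$ for a fully irreducible $\phi$ has finite index in $F_r$. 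To repair your outline, replace the appeal to \cite{Mut} with this (or an equivalently precise) reference; the free factor support step then becomes superfluous.
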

\begin{proof}
	This lemma is a consequence of Bestvina, Feighn, and Handel's work on
	lamination theory for $\Out(F_r)$. For brevity we do not include the
	definition of a lamination or weak convergence.  The assumptions on $V$
	imply that there exists a nontrivial element $v\in V$ whose conjugacy
	class is not $\phi$-periodic and such that iterates $\phi^n(v)\in V$
	for all $n\ge 1$. These iterates $\phi^n(v)$ can be used to construct a
	leaf of $\Lambda_\phi^+$, satisfying the definition of $V$ carrying
	$\Lambda^+_\phi$~\cite{BFH97}*{Definition~2.2}. If a finitely generated
	subgroup $V\le F_r$ carries the stable lamination $\Lambda^+_\phi$ of
	an irreducible $\phi$, then $V$ is finite index in $F_r$, as claimed~\cite{BFH97}*{Proposition~2.4}.
\end{proof}

The following consequence of Lemma~\ref{lem:invariant-fi} is a well-known
folklore result that is commonly attributed to Bestvina, Feighn, and Handel~\cite{BFH97}.

\begin{lem}\label{lem:fi-fi}
Let $\phi\in \Aut(F_r)$ be a fully irreducible atoroidal automorphism. Let $U\le F_r$ be a subgroup of finite index and let $n\ge 1$ be such that $\phi^n(U)=U$. Put $\psi=\phi^n|_U\in \Aut(U)$. Then $\psi$ is again fully irreducible and atoroidal. 
\end{lem}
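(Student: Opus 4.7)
The plan is to handle the two assertions separately: atoroidality of $\psi$ follows by a direct restriction argument from $\phi$, whereas full irreducibility of $\psi$ will be proved by contradiction using Lemma~\ref{lem:invariant-fi}.

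For the atoroidal conclusion, since $\psi=\phi^n|_U$, any element $w\in U$ witnessing $\psi^k$-periodicity of the $U$-conjugacy class of some $1\ne u\in U$ also witnesses $\phi^{nk}$-periodicity of the $F_r$-conjugacy class of $u$ (the conjugation relation $\psi^k(u)=wuw^{-1}$ lifts verbatim to $F_r$ because $\psi^k(u)=\phi^{nk}(u)$ and $w\in U\subseteq F_r$). Atoroidality of $\phi$ forbids this, so $\psi$ is atoroidal.

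For the fully irreducible conclusion, suppose toward a contradiction that there exist an integer $p\ge 1$ and a proper free factor $1\ne W\lneq U$ with $\psi^p([W]_U)=[W]_U$. Unpacking, this gives $\phi^{np}(W)=vWv^{-1}$ for some $v\in U\subseteq F_r$. I would then replace $\phi^{np}$ by the automorphism $\phi':=c_{v^{-1}}\circ\phi^{np}\in\Aut(F_r)$ (where $c_w(x)=wxw^{-1}$), which represents the same outer class as $\phi^{np}$ and satisfies $\phi'(W)=W$. Since both ``fully irreducible'' and ``atoroidal'' are invariants of the outer class and are preserved by nonzero powers, the automorphism $\phi'$ remains fully irreducible and atoroidal.

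I would then apply Lemma~\ref{lem:invariant-fi} to $\phi'$ and $W$: the subgroup $W$ is nontrivial, finitely generated (as a free factor of the finite-rank free group $U$), and $\phi'$-invariant. The excluded possibility $W=\langle u\rangle$ with $[u]$ a $\phi'$-periodic conjugacy class is ruled out by the atoroidality of $\phi'$. Hence Lemma~\ref{lem:invariant-fi} forces $W$ to have finite index in $F_r$. On the other hand, any proper free factor of $U$ has infinite index in $U$ (a short Schreier-rank count: a finite-index proper free factor would force the complementary factor to be trivial), and since $U$ has finite index in $F_r$, the subgroup $W$ has infinite index in $F_r$. This contradiction completes the argument.

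The step requiring the most care is the passage from the setwise equality $\phi^{np}(W)=vWv^{-1}$ to the genuine invariance $\phi'(W)=W$ for a representative $\phi'$ of the same outer class, so that Lemma~\ref{lem:invariant-fi} applies; the remainder is standard outer-class invariance and an index/rank accounting.
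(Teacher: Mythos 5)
Your proposal is correct and follows essentially the same route as the paper: a direct restriction argument for atoroidality, and for full irreducibility a contradiction via replacing $\phi^{np}$ by a conjugate-adjusted representative $\phi'$ of the same outer class fixing the putative periodic free factor $W$, then invoking Lemma~\ref{lem:invariant-fi} (with atoroidality ruling out the excluded cyclic case) against the fact that a proper free factor of $U$ has infinite index. The only difference is that you spell out the Schreier-index count and the verification of the hypotheses of Lemma~\ref{lem:invariant-fi} in slightly more detail than the paper does.
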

\begin{proof}
Since $\phi$ has no nontrivial periodic conjugacy classes in $F_r$, it follows that $\psi$ has no nontrivial periodic conjugacy classes in $U$, so that $\psi\in\Aut(U)$ is atoroidal. Suppose that $\psi$ is not fully irreducible. Then there exists a nontrivial proper free factor $V$ of $U$ such that $\psi^m(V)=u^{-1} Vu$ for some $u\in U$ and $m\ge 1$. Thus $u\phi^{mn}(V)u^{-1}=V$. Note that $V$ has infinite index in $U$ and hence in $F_r$ as well. The automorphism $\phi^{mn}$ of $F_r$ is fully irreducible in $\Aut(F_r)$ since $\phi$ is fully irreducible. Hence $\phi'\in \Aut(F_r)$, $\phi'(w)=u\phi^{mn}(w)u^{-1}$, where $w\in F_r$, is also fully irreducible in $\Aut(F_r)$ as it equals $\phi^{mn}$ in $\Out(F_r)$. For $\phi'$ we have $\phi'(V)=V$, where $V\le F_r$ is a nontrivial subgroup of infinite index in $F_r$. Note that $\phi'$ is also atoroidal. Then Lemma~\ref{lem:invariant-fi} implies that $V$ has finite index in $F_r$, yielding a contradiction. 
\end{proof}

\begin{reptheor}{thm:B}
Let $F_r$ be a free group of finite rank $r\ge 2$, let $\phi\in \Out(F_r)$ be a fully irreducible  outer automorphism and let $G_\phi$
be the mapping torus group of $\phi$.

Then for every two-generator subgroup $H\le G_\phi$ either $H$ is free, free abelian, a Klein bottle group, or $H$ has finite index in $G_\phi$.

\end{reptheor}
\begin{proof}
Let $H\le G_\phi$ be a two-generator subgroup. By Theorem~\ref{thm:A}, either $H$ is free or $H$ is conjugate to a subgroup $H'\le G_\phi$ such that $H'$ is a sub-mapping torus subgroup.

If $H$ is free then the conclusion of Theorem~\ref{thm:B} holds. 

Suppose now that $H$ is conjugate to a subgroup $H'\le G_\phi$ such
that $H'$ is a sub-mapping torus subgroup of $G_\phi$. Thus $H'=\langle
s, V\rangle$ where $V\le F_r$ is a nontrivial finitely generated
subgroup, $s=ut^m$ with $u\in F_r$ and $m\ge 1$ and, moreover,
	$sVs^{-1}\le V$. By Lemma~\ref{lem:m}~(\ref{lem:m-3}), the subgroup $\langle
s,F_r\rangle$ has index $m$ in $G_\phi$ and, moreover, $\langle
s,F_r\rangle=G_\psi$ where $\psi:F_r\to F_r$ is the automorphism given
by $\psi(x)=u\phi^m(x)u^{-1}$ for all $x\in F_r$. Then $\psi=\phi^m$ in
$\Out(F_r)$ and in particular $\psi$ is also fully
irreducible. We also have $\psi(V)\le V$ for a nontrivial finitely
generated subgroup $V\le F_r$. Since $\psi$ is an automorphism of $F_r$, it follows that $\psi(V)=V$.

If $V$ has rank $1$, then $V = \langle c\rangle$ is infinite cyclic and
either $\psi(c)=c$ or $\psi(c)=c^{-1}$. If $\psi(c)=c$, then $H\cong
H'=\langle s, V\rangle\cong \mathbb Z\times \mathbb Z$. If
$\psi(c)=c^{-1}$ then 
\[
H\cong H'=\langle s, V\rangle=\langle s, c| scs^{-1}=c^{-1}\rangle,
\]
the Klein bottle group.

If $V$ has rank at least $2$ then, by Lemma~\ref{lem:invariant-fi} $V$ has
finite index in $F_r$. 
Therefore $H'=\langle s, V\rangle$ has finite
index in $G_\psi$ and therefore also in $G_\phi$.
Since $H$ is conjugate to $H'$ in $G_\phi$, it follows that $H$ has
finite index in $G_\phi$, as required.
\end{proof}

For automorphisms induced by a pseudo-Anosov homeomorphism on a punctured surface, we have an alternative to Lemma~\ref{lem:invariant-fi}. The following lemma is likely well understood in low-dimensional topology, and the specific proof given below was suggested to us by Chris Leininger.  

\begin{lem}\label{lem:sub_pA}
	Suppose $F_r$ is a free group of finite rank $r\ge 2$ and let $\phi\in\Out(F_r)$ be induced by a pseudo Anosov homeomorphism on some punctured surface $\Sigma$ with $\pi_1(\Sigma) \cong F_r$. If $V\le F_r$ is a nontrivial finitely generated subgroup that is not conjugate into a subgroup generated by a boundary curve and $\phi(V)\subseteq V$ then $V$ is finite index in $F_r$.
\end{lem}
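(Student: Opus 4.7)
The plan is to convert the algebraic invariance $\phi(V) \subseteq V$ into an $f$-invariant essential non-peripheral subsurface of $\Sigma$, then contradict the Nielsen--Thurston dynamical characterization of pseudo-Anosov mapping classes. First I would promote the inclusion to equality: choose $\Phi \in \Aut(F_r)$ representing $\phi$ with $\Phi(V) \subseteq V$; since $\Phi$ is an automorphism and $V$ has finite rank, the chain $V \subseteq \Phi^{-1}(V) \subseteq \Phi^{-2}(V) \subseteq \cdots$ consists of subgroups of $F_r$ of constant rank $\rank(V)$, so the Higman--Takahasi theorem (applied as in the proof of Proposition~\ref{prop:sub-fbc}) forces $\Phi(V) = V$.

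Let $p : \tilde\Sigma \to \Sigma$ be the cover with $p_*\pi_1(\tilde\Sigma) = V$. The equality $\Phi(V) = V$ allows $f$ to lift to a homeomorphism $\tilde f : \tilde\Sigma \to \tilde\Sigma$. Because $V$ is finitely generated, Scott's compact core theorem gives a compact subsurface $\Sigma_0 \subset \tilde\Sigma$ whose inclusion is a homotopy equivalence, and the non-peripherality hypothesis on $V$ ensures $\Sigma_0$ is not a cusp neighborhood. The restriction $i = p|_{\Sigma_0} : \Sigma_0 \to \Sigma$ is then a $\pi_1$-injective immersion realizing $V$ as the fundamental group of an essential, non-peripheral, compact immersed subsurface of $\Sigma$. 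Since compact cores are unique up to isotopy in $\tilde\Sigma$ and $\tilde f$ is a homeomorphism, $\tilde f(\Sigma_0)$ is isotopic to $\Sigma_0$, and so the free homotopy class of $i$ is preserved by $f$.

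Assume for contradiction that $V$ has infinite index, so $i(\Sigma_0)$ is a proper essential non-peripheral immersed subsurface of $\Sigma$ with $f$-invariant homotopy class. By Scott's LERF theorem for surface groups, some finite cover of $\Sigma$ contains a lift of $i$ as an embedding; passing further to a finite-index subcover invariant under some power $f^N$ (using that there are only finitely many subgroups of $F_r$ of each bounded index and that $\phi$ permutes them) yields a pseudo-Anosov homeomorphism of a finite-type surface preserving the isotopy class of a proper, essential, non-peripheral, embedded subsurface. This contradicts the Nielsen--Thurston characterization of pseudo-Anosov mapping classes as precisely those preserving the isotopy class of no such subsurface, and so $V$ must have finite index. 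The main obstacle I expect is the cover-invariance step: one must ensure that the finite cover in which the immersion embeds can be chosen $\phi$-invariantly so that a power of $f$ lifts, which requires a careful argument combining Scott separability with the pigeonhole observation that the finitely many supergroups of $V$ of each bounded index are permuted by $\phi$.
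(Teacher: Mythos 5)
Your proposal is correct and follows essentially the same route as the paper: use separability/LERF to realize $V$ as the fundamental group of a proper essential subsurface of a finite cover of $\Sigma$, lift a power of the pseudo-Anosov $f$ to that cover (it remains pseudo-Anosov), and derive a contradiction from the fact that this lift would preserve the boundary multicurve of the subsurface and hence be reducible. Your detour through the infinite cover and its Scott compact core, and your explicit Higman--Takahasi promotion of $\phi(V)\subseteq V$ to equality, are just more detailed scaffolding for the same argument the paper gives more tersely.
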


\begin{proof}
	Suppose $V$ is infinite index. Then subgroup separability of free groups (or indeed of surface groups) implies that there is a finite index subgroup of $F_r$ with $V$ as a retract. In terms of the surface, this means that there is a finite degree cover $\Sigma_0$ of $\Sigma$ with $V$ the fundamental group of a proper subsurface of $\Sigma_0$. Let $\mathcal{C}$ be the isotopy classes of curves on the boundary of this subsurface but not of $\Sigma_0$.
	
	Let $f$ be the pseudo-Anosov homeomorphism inducing $\phi$, and recall that every power of $f$ is again pseudo-Anosov, and that if a pseudo-Anosov homeomorphism lifts to a cover, it induces a pseudo-Anosov homeomorphism of the cover. In particular, since $\Sigma_0$ is a finite degree cover, some power $f^k$ lifts to a pseudo-Anosov on $\Sigma_0$, and will still preserve $V$. But then $f^k$ permutes the curves in $\mathcal{C}$, and must be reducible, a contradiction.
\end{proof}

Using this lemma, we can reproduce algebraically a special case of the Jaco--Shalen result for 3-manifold groups, needing no assumption on orientability.

\begin{repcorol}{cor:3manifold}
	Let $G$ be the fundamental group of a fibered cusped hyperbolic 3-manifold $M$. Then every two-generator subgroup $H \le G$ is either free, free abelian, a Klein bottle group, or $H$ has finite index in $G$. 
\end{repcorol}

\begin{proof}
	Since $M$ is cusped and fibered, it is a mapping torus of a punctured surface $\Sigma$, and $G$ is a free-by-cyclic group $G_\phi$, where the underlying free group $F_r$ is the fundamental group of $\Sigma$. Since $M$ is hyperbolic, the homeomorphism in question is pseudo-Anosov. Applying Theorem~\ref{thm:A} we see that $H$ is either free or conjugate to a sub-mapping torus. As in the proof of Theorem~\ref{thm:B} suppose $H$ is conjugate to a sub-mapping torus $H' = \langle s, V \rangle$, where $V \leq F_r$ is a non-trivial finitely generated subgroup, and $s=ut^m$ with $u \in F_r$, $m \geq 1$ and $sVs^{-1} \leq V$. Considering the index $m$ subgroup $G_\psi$ with $\psi(x) = u \phi^m(x) u^{-1}$, again we see $\psi(V) = V$.
	
	Now apply Lemma~\ref{lem:sub_pA} to conclude that either $V$ is rank 1,
	and contained in a boundary subgroup, or of finite index in $F_r$. In
	the rank 1 case, as before we have $V = \langle c \rangle$ and $H 
	\cong \Z^2$ if $\psi(c) = c$ or a Klein bottle group if $\psi(c) =
	c^{-1}$. In each case, $H$ is a cusp subgroup. Otherwise $V$ is of finite index in $F_r$, and $H'$ is finite index in $G_\psi$ and hence in $G_\phi = G$. Since $H$ is conjugate to $H'$, $H$ is of finite index in $G$, as required. The final part of the statement follows as in Theorem \ref{thm:B}.
\end{proof}

Note that for $F_2=F(a,b)$ and $u=[a,b]$, for every $\phi\in\Aut(F_2)$ we have $\phi^2([u])=[u]$. Thus $F_2$ has no atoroidal automorphisms.

\begin{repcorol}{cor:A}
	Let $F_r$ be a free group of finite rank $r\ge 3$, let $\phi\in\Out(F_r)$ be a fully irreducible atoroidal outer automorphism. Then every two-generator subgroup $H\le G_\phi$ of the mapping torus group is either free or has finite index in $G_\phi$. 
\end{repcorol}
\begin{proof}
	If $\phi$ is atoroidal, then so is $\psi$ in the proof above, therefore the case where $V$ is rank $1$ is excluded, and the conclusion follows. Alternately, since $\phi$ is fully irreducible and atoroidal, $G_\phi$ is hyperbolic~\cite{B00}*{Theorem~1.2}. By Theorem~\ref{thm:B} the subgroup $H$ is free, free abelian, a Klein bottle group, or has finite index. Since $G_\phi$ is hyperbolic, $H$ cannot be $\mathbb{Z}\times\mathbb{Z}$ or a Klein bottle group. The last part of the statement follows as usual.
\end{proof}

The next corollary shows that, passing to a suitable finite index subgroup of $G_\phi$, one can dispense with the possibility that a two-generator subgroup is finite index. One ingredient of the proof is the following well-known lemma, whose proof we include for completeness.
\begin{lem} \label{lem:tra} Let $G$ be a group and $G_0 \leq G$ a subgroup of finite index. Then the inclusion-induced map in homology $H_1(G_0;\Q) \to H_1(G;\Q)$ is surjective; in particular, $b_1(G_0) \geq b_1(G)$. 
\end{lem}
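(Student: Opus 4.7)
The plan is to invoke the transfer homomorphism for finite-index subgroups. Let $n = [G:G_0] < \infty$. There is a standard transfer map $\tau: H_*(G;\Q) \to H_*(G_0;\Q)$ (see, e.g., Brown's book on cohomology of groups) defined on the level of chain complexes by summing over a set of coset representatives. The key property of $\tau$ is that the composition with the inclusion-induced map $i_*: H_*(G_0;\Q) \to H_*(G;\Q)$ satisfies
\[
i_* \circ \tau = n \cdot \mathrm{id}_{H_*(G;\Q)}.
\]

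First I would record the existence of $\tau$ and this composition formula, specialized to degree $1$. Since $n$ is a nonzero integer and we have coefficients in $\Q$, multiplication by $n$ on $H_1(G;\Q)$ is an isomorphism. In particular, $i_* \circ \tau$ is surjective, which immediately forces $i_*: H_1(G_0;\Q) \to H_1(G;\Q)$ to be surjective.

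For the Betti number statement, once surjectivity of $i_*$ is established, any basis of $H_1(G;\Q)$ lifts (noncanonically) to a linearly independent family in $H_1(G_0;\Q)$, which gives $\dim_\Q H_1(G_0;\Q) \geq \dim_\Q H_1(G;\Q)$, i.e. $b_1(G_0) \geq b_1(G)$ (with the inequality holding trivially if $b_1(G_0)$ is infinite).

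There is no real obstacle here; the only point worth stating carefully is the transfer identity $i_* \circ \tau = n \cdot \mathrm{id}$, which is the reason the argument needs $\Q$ (or any coefficient ring in which $n$ is invertible) rather than $\Z$ coefficients. Over $\Z$ the conclusion can fail: for example, $G = \Z$ with $G_0 = n\Z$ gives inclusion-induced map $\Z \to \Z$ equal to multiplication by $n$, which is injective but not surjective — yet after tensoring with $\Q$ it becomes an isomorphism, consistent with the lemma.
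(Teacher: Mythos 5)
Your proposal is correct and follows the same route as the paper: both invoke the transfer map $\tau$ with $i_*\circ\tau = [G:G_0]\cdot\mathrm{id}$ (citing Brown, Section III.9) and then pass to $\Q$ coefficients, where multiplication by the index is invertible, to conclude surjectivity of $i_*$ and hence $b_1(G_0)\ge b_1(G)$. The remark about failure over $\Z$ is a nice touch but not needed.
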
 
\begin{proof} Denote by $\alpha \colon G_0 \to G$ the inclusion map, so that
	the inclusion-induced map in integral homology is $\alpha_{*} \colon
	H_1(G_0;\Z) \to H_1(G;\Z)$. As $[G:G_0] < \infty$, there exists a {\em
	transfer} map $\alpha^{*} \colon H_1(G;\Z) \to H_1(G_0;\Z)$ with the
	property that $\alpha_{*} \circ \alpha^{*} \colon H_1(G;\Z) \to
	H_1(G;\Z)$ is multiplication by $[G:G_0]$~\cite[Section III.9]{Br94}. Changing the coefficients to the rational numbers, this entails that $\alpha_{*} \colon H_1(G_0;\Q) \to H_1(G;\Q)$ admits a right-inverse, hence it is surjective.
\end{proof}

\begin{repcorol}{cor:G0}
For each of the following groups $G$ there exists a finite index subgroup $G_0$ such that every two-generator subgroup of $G_0$ is either free, free abelian, or the Klein bottle group.
	\begin{enumerate}
		\item\label{cor:G0-1} If $\phi\in\Out(F_r)$ is fully irreducible, $r\ge 2$ and $G = G_\phi$. Moreover, if $\phi$ is additionally atoroidal, every two-generator subgroup of $G_0$ is free.
		\item\label{cor:G0-2} If $G$ is the fundamental group of a fibered cusped hyperbolic 3--manifold.
	\end{enumerate}

\end{repcorol}

\begin{proof}
	Note that the moreover of Case~(\ref{cor:G0-1}) directly follows from the first statement, since for an atoroidal fully irreducible $\phi$ the group $G_\phi$ is word-hyperbolic and hence cannot contain $\mathbb Z\times \mathbb Z$ or Klein bottle subgroups.

In both of these cases $G=F\rtimes \mathbb Z$ where $F$ is a finitely generated free group of rank at least two. Therefore $G$ is {\em large}, namely it admits a finite index subgroup which surjects to a free noncyclic group~\cites{Bu08,Bu13,HW15}. It follows that $G$ contains a subgroup $G_0$ of finite index such that $b_1(G_0) > 2$. 

	Now let $H\le G_0$ be a two-generator subgroup. Since $H\le G$, applying Theorem~\ref{thm:B} to $G$ in Case~(\ref{cor:G0-1}) and Corollary~\ref{cor:3manifold} to $G$ in Case~(\ref{cor:G0-2}) we conclude that $H$ is either free, free abelian, the Klein bottle group or $H$ has finite index in $G$ and hence in $G_0$ as well.

However, the latter option is actually not possible. Indeed, if $H$ were finite index in $G_0$ then
 $2 \geq b_1(H) \geq b_1(G_0) > 2$, with the first inequality dictated by the fact that $H$ is two-generator, and the middle inequality due to the fact that the first Betti number is non-decreasing over finite index subgroups, by Lemma \ref{lem:tra}. That yields a contradiction, and hence $H$ is either free, free abelian, or the Klein bottle group, as required.
\end{proof}

\section{Applications to one-relator groups with torsion and RFRS groups}

Recall that for an infinite subgroup $H$ of a group $G$ the \emph{height} of $H$ in $G$~\cite{GMRS} is the supremum of all $n\ge 1$ such that there exist $g_1,\dots, g_n\in G$ with $Hg_1,\dots, Hg_n$ pairwise distinct and such that the intersection 
\[
\bigcap_{i=1}^n g_i^{-1}Hg_i
\]
is infinite.

\begin{reptheor}{thm:C}
Let $G=\langle x_1,\dots, x_r| w^n=1\rangle$ where $r\ge 2$, $w\in F(x_1,\dots, x_r)$ is a nontrivial freely and cyclically reduced word and $n$ is an integer such that $n\ge |w| \ge 2$. Then $G$ contains a subgroup of finite index $G_0$ such that every two-generator subgroup of $G_0$ is free.
\end{reptheor}

\begin{proof}
A recent result of Kielak and Linton~\cite{KL24} shows that $G$ has a subgroup of finite index $G_0$ such that $G_0\cong F\rtimes_\phi \mathbb Z$, where $F$ is a free group (necessarily of infinite rank) and where $\phi\in \Aut(F)$ is some automorphism of $F$. Let $H$ be a two-generator subgroup of $G_0=G_\phi$.
By Theorem~\ref{thm:A}, either $H$ is free or $H$ is conjugate in $G_\phi$ to a subgroup $H'\le G_\phi$ such that $H'$ is a sub-mapping torus of $G_\phi$.

If $H$ is free, the conclusion of Theorem~\ref{thm:C} holds, as required.

Suppose now that $H$ is conjugate in $G_\phi$ to a subgroup $H'\le
G_\phi$ such that $H'$ is a sub-mapping torus of $G_\phi$. Then
$H'=\langle s, V\rangle$, where $s=ut^m$, $u\in F$, $m\ge 1$ and where
$V\le F$ is a nontrivial finitely generated subgroup such that
$sVs^{-1}\le V$ and so $V\le s^{-1}Vs$. Hence $V\le s^{-(p-1)}Vs^{p-1} \le s^{-p}Vs^p$ for all $p\ge 1$. For the homomorphism $\theta:G_\phi \to \mathbb Z$, $\theta|_F=0$, $\theta(t)=1$ we have $\theta(V)=0$, $\theta(s)=m$ and $\theta(Vs^{p})=pm$ where $p\ge 0$ is arbitrary. Therefore the cosets
$\{Vs^{p}|p=0,1,2,3,\dots\}$ are pairwise distinct for every $p\ge 0$, while the
intersection $\cap_{i=0}^p s^{-i}Vs^{i} =V$ is infinite. Hence $V\le G$
has infinite height in $G$.  However, $G$ is a one-relator group with
torsion and thus is word-hyperbolic. Moreover, the assumption $n\ge |w|
\ge 2$ implies that $G$ is locally
quasiconvex~\cite{HW01}*{Theorem~1.2}. In particular, $V\le G$ is an
infinite quasiconvex subgroup and hence has finite height in
$G$~\cite{GMRS}, yielding a contradiction.
\end{proof}

We also obtain a corollary about certain RFRS groups. These are groups where
there is a residual chain such that each $G_{i+1}$ is contained in the kernel
of the map $G_i \to G_i^{\mathrm{fab}}$, the maximal torsion free quotient of
the abelianization of $G_i$~\cite{Ago08}*{Definition 2.1}.

\begin{repcorol}{cor:RFRS}
	Suppose $G$ is a finitely generated RFRS group with $b_2^{(2)}(G)=0$ and $\mathrm{cd}_\mathbb{Q}(G)=2$. Then there is a finite index subgroup $G_0$ of $G$ such that every two generator subgroup $H$ of $G_0$ is free or the mapping torus of an endomorphism of a finitely generated free group.
\end{repcorol}

\begin{proof}
	By a theorem of Fisher~\cite{Fis24}~{Theorem A}, such a group is virtually free-by-cyclic (in the more general sense, where the free kernel need not be finitely generated) and the group $G_0$ is precisely this finite index subgroup. This is exactly the situation of our Theorem~\ref{thm:A}, and so any two generator subgroup of $G_0$ is free or conjugate to a sub-mapping torus of $G_0$. The description of sub-mapping tori implies that any such subgroup is the mapping torus of an injective endomorphism of a finite rank free group (the map being given by the conjugation action of $s$ on $V$), and passing to a conjugate does not change this.
\end{proof}

\appendix

\section{The 3--manifold way \emph{by Peter Shalen}}\label{Sec:appendix}

The immediate goal of this appendix is to prove a generalization of Theorem
VI.4.1 of \cite{JS79} 
(or of the similar result proved by Tucker in \cite{Tuc77})
which does not require the $3$-manifold in
question to be orientable or ``sufficiently large.'' (A compact
$3$-manifold is said to be sufficiently large if it  contains a properly embedded, $2$-sided,
incompressible surface. Compact, orientable,
irreducible 3-manifolds that are sufficiently large are often called
Haken manifolds.)
This generalization
is stated below as Corollary \ref{appendix-prop}. I have chosen to
deduce it from a still more general result, Theorem \ref{k-gen} below,
which includes \cite[Corollary 7.2]{accs} and seems to be of
independent interest.

I will work in the PL category. It is understood that a
``manifold'' may have a boundary. The Euler characteristic of a
compact PL space $X$ will be denoted by $\chi(X)$. Base points will
generally be suppressed in statements whose truth is independent of
the choice of base points. 

A $3$-manifold $\wasM$ is termed {\it irreducible} if $\wasM$ is connected and
every $2$-sphere in $\wasM$ bounds a ball. One says that $\wasM$ is {\it
  $\PP^2$-irreducible} if $\wasM$ is irreducible and contains no
two-sided projective plane. These are the same definitions as the ones
given in \cite{hem}, except that the connectedness condition is not
made explicit there.

As on page 172 of \cite{hem}, 
a subgroup of $\pi_1(\wasM)$, where $\wasM$ is a connected
$3$-manifold, will be termed {\it
  peripheral} if
there is a
  connected subsurface 
$X$ of $\partial \wasM $ such that  the inclusion homomorphism
$\pi_1(X)\to\pi_1(\wasM )$ is injective, and  $P$ is conjugate to a
subgroup of the image of this inclusion homomorphism.

The {\it rank} of a group $H$ is the minimum cardinality of a
generating set for $H$.

Here are the main results:

\begin{apptheor}\label{k-gen}
Let $\wasM $ be a compact, $\PP^2$-irreducible  $3$-manifold, and let
$k\ge2$ be an integer. Suppose that 
every rank-$2$ free
abelian subgroup of $\pi_1(\wasM )$ is peripheral. Suppose also that
either 

(a) $\wasM$ is orientable and $\pi_1(\wasM)$ has no subgroup isomorphic to the
fundamental group of a closed, orientable surface $S$ such that
$1<\genus S<k$, or 

(b) $\pi_1(\wasM)$ has no subgroup isomorphic to the
fundamental group of a closed surface $S$ such that $\chi(S)$ is even and
$2-2k<\chi( S)<0$. 

Let $H$ be a subgroup
of $\pi_1(\wasM )$ that has rank at most $k$. Then either $H$ has
finite index in $\pi_1(\wasM)$, or $H$ is a free product of finitely
many subgroups, each of which is  a
 free abelian group of rank at most $2$ or a Klein bottle group.
\end{apptheor}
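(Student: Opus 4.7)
My plan is to translate the statement about $H\le\pi_1(\wasM)$ into a question about a compact $3$-manifold by passing to the cover corresponding to $H$, extracting a compact core, and then exploiting $\PP^2$-irreducibility together with the two surface-subgroup hypotheses to classify the boundary of that core.

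First I would let $p\colon \twasM\to\wasM$ be the covering with $p_{\ast}\pi_1(\twasM)=H$. Since $H$ is finitely generated, Scott's compact core theorem produces a compact $3$-submanifold $C\subset\twasM$ whose inclusion induces an isomorphism $\pi_1(C)\cong H$. Because $\wasM$ is $\PP^2$-irreducible, so is $\twasM$; after capping off any $S^2$ components of $\partial C$ with balls supplied by $\twasM$, and checking that $\PP^2$ boundary components cannot arise in this setting, I may assume $C$ is itself $\PP^2$-irreducible. If $\partial C=\emptyset$ then $C$ is a closed $3$-submanifold of the connected manifold $\twasM$, so $C=\twasM$; the cover $p$ is therefore finite and $H$ has finite index in $\pi_1(\wasM)$ in this case.

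Assuming $\partial C\ne\emptyset$, I would perform maximal boundary compressions using the loop theorem and Dehn's lemma, after which every component of $\partial C$ is either a sphere (absorbed by irreducibility) or incompressible in $C$. Each surviving incompressible component $\Sigma\subset\partial C$ has $\pi_1(\Sigma)$ injecting into $H\le\pi_1(\wasM)$. A half-lives-half-dies computation bounds the total Euler characteristic of $\partial C$ in terms of $b_1(C)\le\rank H\le k$; combined with hypothesis (a) or (b), which directly forbids the ``middle'' closed surface subgroups, this rules out every component of $\partial C$ with negative Euler characteristic (the middle-range genera by hypothesis, the higher genera by the rank/Betti bound). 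What survives are $2$-tori and Klein bottles, giving $\Z^2$ and Klein bottle subgroups of $\pi_1(\wasM)$, which the standing hypothesis ensures are peripheral.

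To conclude, I would appeal to the structure theory of a compact $\PP^2$-irreducible $3$-manifold $C$ whose incompressible boundary consists solely of tori and Klein bottles and whose interior contains no closed essential surface of forbidden genus: such $C$ splits along a maximal system of essential annuli and M\"obius bands into pieces that are solid tori, solid Klein bottles, $I$-bundles over $T^2$ or the Klein bottle, and $3$-balls. A Grushko / van Kampen argument then expresses $\pi_1(C)=H$ as a free product of copies of $\Z$, $\Z^2$, and the Klein bottle group, as required. The principal obstacle I anticipate is the non-orientable case: the classical Jaco--Shalen argument relies on Poincar\'e duality with untwisted coefficients and orientation double-cover tricks that must be adapted, and the $\chi$-even clause in (b) is precisely the symmetry condition that makes the rank-versus-boundary estimate survive that adaptation. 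A subsidiary difficulty is excluding a boundary component of genus exactly $k$ in case (a), which is not ruled out by the surface-subgroup hypothesis alone and appears to require a separate rank-versus-Heegaard-genus or Nielsen-equivalence argument using the bound on $\rank H$.
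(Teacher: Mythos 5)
Your opening moves track the paper's: pass to the cover $\twasM$ determined by $H$, take a compact core, dispose of the closed case by finite index, and use the Loop Theorem together with a Grushko-type reduction to arrange incompressible boundary. The fatal problem is your final step. There is no structure theorem asserting that a compact $\PP^2$-irreducible $3$-manifold with incompressible torus and Klein bottle boundary splits along essential annuli and M\"obius bands into solid tori, solid Klein bottles and $I$-bundles; the exterior of a hyperbolic knot is a counterexample (incompressible torus boundary, no closed essential surfaces if the knot is small, yet its group is freely indecomposable and is not $\Z$, $\Z^2$ or a Klein bottle group). Your sketch never actually uses the peripherality hypothesis to constrain $H$ --- you invoke it only to label the resulting $\Z^2$'s --- so it cannot exclude such a core. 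The paper's essential move is different: because each boundary torus of (the orientation cover $\wasN$ of) the core carries a rank-$2$ free abelian subgroup of $\pi_1(\wasM)$, peripherality plus asphericity lets one homotope the composite $\wasN\to\wasM$ to a \emph{boundary-preserving} map, and Waldhausen's theorem on boundary-preserving maps between sufficiently large $\PP^2$-irreducible manifolds (Hempel, Theorem 13.6) then yields the dichotomy: either the map is homotopic to a covering, so $H$ has finite index, or $\wasN$ is an $I$-bundle over a closed surface, so $H$ is $\Z^2$ or a Klein bottle group (after a short descent argument from the orientation cover). That dichotomy is the content of the theorem, and it is absent from your argument.

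Second, your Euler characteristic estimate is not sharp enough, as you yourself suspect at the end. From $h_1(C)\le k$ one gets only $\chi(\partial C)=2\chi(C)\ge 2-2k$, which leaves open a boundary component with $\chi=2-2k$ (genus exactly $k$ in case (a)), not excluded by the surface-subgroup hypotheses. The fix is not a Heegaard-genus or Nielsen-equivalence argument: the paper notes that equality $\chi(C)=1-k$ would give $H$ a presentation of deficiency $k$, and Magnus's theorem (a group of rank at most $k$ admitting a deficiency-$k$ presentation is free of rank $k$) then contradicts free indecomposability, so the inequality is strict. Two smaller points: in case (b), when the core has two or more boundary components of negative Euler characteristic the largest one need not have even $\chi$, and the paper restores parity by passing to its orientation double cover before invoking (b); and the identity $\chi(\partial C)=2\chi(C)$ holds for every compact odd-dimensional manifold, so no half-lives-half-dies or twisted Poincar\'e duality adaptation is needed in the non-orientable case.
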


Theorem \ref{k-gen} will be seen to have the following corollary:

\begin{appcorol}\label{appendix-prop}
Let $\wasM $ be a compact, $\PP^2$-irreducible  $3$-manifold such that
every rank-$2$ free
abelian subgroup of $\pi_1(\wasM )$ is peripheral. Let $H$ be a subgroup
of $\pi_1(\wasM )$ that has rank at most $2$. Then $H$ is either a
free group, a free abelian group, a Klein bottle group, or a
finite-index subgroup of $\pi_1(\wasM )$.
\end{appcorol}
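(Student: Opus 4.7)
The plan is to deduce the corollary directly from Theorem \ref{k-gen} specialized to $k = 2$. The first step is to observe that both hypotheses (a) and (b) of that theorem are vacuous when $k = 2$: condition (a) would forbid closed orientable surface subgroups with $1 < \genus(S) < 2$, and there is no integer strictly between $1$ and $2$; condition (b) would forbid closed surface subgroups with $\chi(S)$ even and $-2 < \chi(S) < 0$, and there is no even integer in that range. So at least one of (a), (b) holds trivially, regardless of whether $\wasM$ is orientable, and Theorem \ref{k-gen} applies unconditionally.

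With this observation in hand, Theorem \ref{k-gen} yields two alternatives for $H$: either $H$ has finite index in $\pi_1(\wasM)$, in which case we are done, or $H$ decomposes as a free product $H_1 * \cdots * H_n$ in which each $H_i$ is a free abelian group of rank at most $2$ or a Klein bottle group. The remaining task is to show that the hypothesis $\rank(H) \le 2$ collapses this second alternative to one of the listed outcomes in the conclusion.

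This collapse follows from Grushko's theorem, which asserts that $\rank(H_1 * \cdots * H_n) = \sum_i \rank(H_i)$ for finitely generated factors. A rank-$2$ free abelian group and a Klein bottle group each have rank exactly $2$, so if any factor $H_i$ of this type appears, the bound $\rank(H) \le 2$ forces $n = 1$ and $H$ to coincide with that single factor, giving a free abelian group or a Klein bottle group. Otherwise every $H_i$ is infinite cyclic (or trivial), and the rank bound then gives at most two nontrivial cyclic factors, so $H$ is trivial, $\Z$, or $F_2$, each of which is free.

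I do not expect any genuine obstacle in this deduction: the substantive work is carried out in Theorem \ref{k-gen} itself, and the corollary falls out by combining the vacuous verification of the surface-subgroup hypotheses for $k=2$ with the short Grushko rank bookkeeping described above.
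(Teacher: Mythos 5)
Your proposal is correct and follows essentially the same route as the paper: verify that hypothesis (b) of Theorem \ref{k-gen} is vacuously satisfied for $k=2$ (which is what matters, since the theorem only needs one of (a), (b); your parenthetical claim that (a) is also ``vacuous'' is slightly imprecise for non-orientable $\wasM$, where the orientability clause of (a) fails, but this does not affect the argument), then collapse the free product decomposition via Grushko's rank additivity exactly as the paper does.
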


We emphasize that if $\wasM$ is assumed to be orientable and
sufficiently large, Theorem \ref{k-gen} and Corollary
\ref{appendix-prop} become essential equivalent, respectively, to
\cite[Corollary 7.2]{accs} and to \cite[Theorem
VI.4.1]{JS79} 
or  the main result of \cite{Tuc77}. Thus the new contribution of this
appendix is to remove the hypotheses ``sufficiently large'' and (more significantly)
``orientable'' from these results.

As was mentioned in the introduction, Corollary \ref{appendix-prop}
includes Corollary C of the present paper as a special case, since the
compact core of every finite-volume hyperbolic $3$-manifold (fibered
or not) satisfies the topological hypotheses of the corollary.

\begin{proof}[Proof that Theorem \ref{k-gen} implies Corollary
\ref{appendix-prop}]
To prove the corollary we apply the theorem with $k=2$. With this
choice of $k$, Alternative (b) of the hypothesis of the theorem is
vacuously true, since there are no even integers strictly between $-2$
and $0$. (If $N$ happens to be orientable, then Alternative (a)  is also
vacuously true; otherwise it obviously is not.) Hence, by the theorem, either $H$ has finite index in
$\pi_1(\wasM)$, or $H$ is isomorphic to a free product $X_1\star\cdots\star X_q$, where each $X_i$ is either a
free abelian group of rank at most $2$ and a Klein bottle group. In
the latter case, by
Grushko's Theorem, we have $\sum_{i=1}^q\rank X_i=\rank H\le2$. Hence
either each $X_i$ is a free abelian group of rank at most $1$, in
which case $H$ is free; or $m=1$ and $X_1$ has rank $2$, in which case
$H$ is a rank-two free abelian group or a Klein bottle group.
\end{proof}

The proof of Theorem \ref{k-gen} is a refinement of the proof of Theorem
VI.4.1 of \cite{JS79}. In order to emphasize this,  I will be citing only results that were available
when \cite{JS79} was written.

I will say that a group $G$ is {\it freely indecomposable} if $G$ is
neither trivial nor infinite cyclic, and is not isomorphic to the free
product of two non-trivial groups. It follows from Grushko's Theorem
that every non-trivial finitely generated group $H$ is isomorphic to a
free product $X_1\star\cdots\star X_q$, where $q\ge1$ is an integer,
and each $X_i$ has
rank at most $k$ and is either freely indecomposable or infinite
cyclic. If $H$ arises as a subgroup of a group $G$, and if some $X_i$
has finite index in $G$, then  $q=1$ and $H $ has finite index in
$G$. Hence
Theorem \ref{k-gen} will follow immediately from the following result:

\begin{prop}\label{behind k-gen}
Let $\wasM $ be a  $3$-manifold for which the hypotheses of Theorem
\ref{k-gen} hold. Let $H$ be a freely indecomposable subgroup
of $\pi_1(\wasM )$ that has rank at most $k$. Then either $H$ has
finite index in $\pi_1(\wasM)$, or $H$ is isomorphic
either to a
 free abelian group of rank $2$ or a Klein bottle group.
\end{prop}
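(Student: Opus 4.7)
The plan is to follow the covering-space strategy behind Jaco--Shalen's Theorem VI.4.1, adapting each step to the possibly non-orientable $\PP^2$-irreducible setting. Assume $H$ has infinite index in $\pi_1(\wasM)$ (otherwise we are done), and pass to the covering $p\colon\twasM\to\wasM$ with $p_\ast\pi_1(\twasM)=H$. Since $H$ is finitely generated, Scott's compact core theorem provides a compact connected submanifold $K\subset\twasM$ for which the inclusion induces an isomorphism $\pi_1(K)\cong H$. The $\PP^2$-irreducibility of $\wasM$ lifts to $\twasM$, so each $2$-sphere in $\twasM$ bounds a ball; I would cap off every sphere component of $\partial K$ by the ball it bounds, and eliminate $\PP^2$ components using $\PP^2$-irreducibility of $\wasM$ together with standard regular-neighborhood manipulations. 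Invoking free indecomposability of $H$, I would then show one may further assume $\partial K$ is incompressible in $K$: any essential compressing disk would split $H$ as a nontrivial free product or produce an infinite-cyclic free factor, contradicting free indecomposability.

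The quantitative step is a rank-to-Euler-characteristic bound. Assuming $\partial K\neq\emptyset$ (otherwise $K=\twasM$ is closed, forcing $H$ to have finite index and contradicting our standing assumption), one has $b_3(K;\mathbb{Q})=0$ and $b_1(K)\le\rank H\le k$, so $\chi(K)=1-b_1(K)+b_2(K)\ge 1-k$. Combined with the identity $2\chi(K)=\chi(\partial K)$ (valid for any compact $3$-manifold), this yields $\chi(\partial K)\ge 2-2k$. Each component $S$ of $\partial K$ is a closed incompressible surface, so $\pi_1(S)\hookrightarrow H\hookrightarrow\pi_1(\wasM)$. Hypothesis (b) excludes every $S$ with $\chi(S)$ even and $2-2k<\chi(S)<0$; in the orientable case, hypothesis (a) analogously excludes orientable $S$ with $1<\genus S<k$. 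Combined with the sphere-and-$\PP^2$-free conclusion from the setup, these constraints together with the sum bound force every component of $\partial K$ to have $\chi(S)=0$; the remaining edge cases (non-orientable $S$ with odd $\chi$, or a single component realizing $\chi=2-2k$) are eliminated by case-by-case checks against the Euler-characteristic budget and against the absent surface subgroups.

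With $\partial K$ a nonempty disjoint union of tori and Klein bottles, each boundary component supplies a rank-$2$ free abelian subgroup of $\pi_1(\wasM)$, which by hypothesis is peripheral in $\pi_1(\wasM)$. I would then invoke the classification of $\PP^2$-irreducible compact $3$-manifolds with incompressible toral/Klein-bottle boundary and freely indecomposable fundamental group: the peripheral $\mathbb{Z}^2$ hypothesis rules out essential tori in the interior of $K$ (as any such would introduce a non-peripheral $\mathbb{Z}^2$ in $\pi_1(\wasM)$), so $K$ is atoroidal. Combined with Seifert fibered space recognition and Waldhausen-style $I$-bundle arguments from the 1970s literature, this forces $K$ to be either $T^2\times I$ or the twisted $I$-bundle over the Klein bottle, whence $H\cong\mathbb{Z}^2$ or $H$ is a Klein bottle group, as required. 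The main obstacle I anticipate is the non-orientable bookkeeping throughout: cleaning up the compact core without introducing $\PP^2$ components, matching the parity of $\chi(S)$ in hypothesis (b) to the actual boundary components of $K$, and in the final identification step ruling out the extra Seifert-fibered candidates that non-orientability permits, using the peripheral hypothesis.
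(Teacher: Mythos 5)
Your outline follows the right general strategy (compact core, boundary-irreducibility from free indecomposability, Euler characteristic bookkeeping, peripherality of boundary tori), but two steps as written do not go through.

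First, the Euler characteristic step has a gap that the stated hypotheses cannot close. Hypotheses (a) and (b) only exclude surface subgroups with $2-2k<\chi(S)<0$ \emph{strictly}, while your budget gives only $\chi(\partial K)=2\chi(K)\ge 2-2k$. So a single boundary component with $\chi(S)=2-2k$ --- for instance an orientable surface of genus exactly $k$ --- saturates the budget and is \emph{not} forbidden by (a) or (b); it cannot be ``eliminated by checks against the absent surface subgroups'' because that surface subgroup is not assumed absent. The paper closes this by proving that the inequality $\chi(K_0)\ge 1-k$ is strict: if $\chi(K_0)=1-k$, collapsing $K_0$ to a $2$-complex yields a presentation of $H$ with deficiency $k$, and Magnus's theorem (a group of rank at most $k$ admitting a deficiency-$k$ presentation is free of rank $k$) contradicts free indecomposability. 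Some such argument is indispensable. (The odd-$\chi$ non-orientable components you flag are genuinely handled by passing to the orientation double cover of the surface, as you suspect, but only once strictness is in hand.)

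Second, and more seriously, the final classification step is false as stated. A compact, $\PP^2$-irreducible, boundary-irreducible, atoroidal $3$-manifold with a single torus boundary component and freely indecomposable fundamental group need not be $T^2\times I$ or a twisted $I$-bundle over the Klein bottle: any hyperbolic knot exterior is a counterexample. No intrinsic classification of $K$ can finish the proof; the peripherality hypothesis must be used to relate $K$ back to $\wasM$, not to constrain $K$ itself (note also that an essential torus in $K$ carrying a subgroup peripheral in $\pi_1(\wasM)$ need not be peripheral in $K$, so your atoroidality claim does not follow either). The paper instead passes to the orientation cover $\wasN$ of $K_0$ so that all boundary components are tori, uses peripherality of the resulting rank-$2$ abelian subgroups of $\pi_1(\wasM)$ to homotope the composite $\wasN\to K_0\hookrightarrow\twasM\to\wasM$ to a boundary-preserving map, and then applies Waldhausen's theorem (Hempel, Theorem 13.6): a $\pi_1$-injective boundary-preserving map from a boundary-irreducible, sufficiently large, $\PP^2$-irreducible manifold is either homotopic to a covering map --- which is precisely where the finite-index alternative arises --- or its domain is an $I$-bundle over a closed surface, after which one descends the $I$-bundle structure from $\wasN$ to $K_0$ to get $H\cong\mathbb{Z}^2$ or a Klein bottle group. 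This dichotomy is the essential missing idea in your argument.
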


The rest of this appendix is devoted to the proof of Proposition
\ref{behind k-gen}. It will be understood that we are given
a manifold $\wasM $ 
for which the hypotheses of Theorem
\ref{k-gen} hold, and a freely indecomposable subgroup   $H$
of $\pi_1(\wasM )$ that has rank at most $k$.
We shall denote by
$\twasM=\twasM_H$ the  connected covering space of $\wasM $ determined by the subgroup $H$, and by
$p:\twasM\to \wasM $ the covering projection.

Since $\wasM $ is $\PP^2$-irreducible, it follows from the Projective Plane Theorem
\cite[Theorem 4.12]{hem}
that $\pi_2(\wasM )=0$. If $\pi_1(\wasM )$ is finite, then $H$ has finite index
in $\pi_1(\wasM )$; hence we may assume that $\pi_1(\wasM )$ is infinite. This,
together with the triviality of $\pi_2(\wasM )$, shows that the universal cover of $\wasM $ is contractible, so that $\wasM $
is aspherical. Hence $\twasM$ is also aspherical.

According to  \cite[Theorem 8.6]{hem},
$\twasM$ has a {\it compact core}, i.e. a compact, connected
three-dimensional submanifold
$K\subset\inter \twasM$ such that the inclusion homomorphism
$\pi_1(K)\to\pi_1(\twasM)$ is an isomorphism. For any compact core $K$ of
$\twasM$, we have $\pi_1(K)\cong H$. Among all compact cores of
$\twasM$ we fix one, $K_0$, having the smallest possible number of
boundary components. 

\begin{lem}\label{irreducible core}
The manifold $\inter K_0$ contains no  two-sided projective plane, and every
$2$-sphere in $K_0$ is the boundary of a compact, contractible
submanifold of $K_0$. Furthermore, if $\twasM$ is irreducible then $K_0$
is $\PP^2$-irreducible.
\end{lem}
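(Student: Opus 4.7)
The plan is to derive all three assertions from two inputs: the irreducibility of $\twasM$ (a standard consequence of $\wasM$ being $\PP^2$-irreducible with infinite fundamental group, obtained via the tower construction or by lifting to the universal cover) and the minimality of the number of boundary components of $K_0$.

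For the claim that $\inter K_0$ contains no two-sided $\PP^2$: any such $P$ would be a two-sided $\PP^2$ in $\twasM$. I would split into cases on whether the inclusion-induced map $\pi_1(P) = \Z/2 \to \pi_1(\twasM) \leq \pi_1(\wasM)$ is trivial. In the trivial case, $P$ lifts to an embedded two-sided $\PP^2$ in the universal cover $\widehat{\twasM}$, which is orientable (being simply connected and $3$-dimensional), contradicting the standard fact that a two-sided closed surface in an orientable $3$-manifold must itself be orientable. In the non-trivial case the inclusion is injective and places $2$-torsion into $\pi_1(\wasM)$, contradicting the torsion-freeness of the fundamental group of an aspherical $3$-manifold.

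For the claim that every $2$-sphere $S \subset K_0$ bounds a compact contractible submanifold of $K_0$, I may assume $S \subset \inter K_0$. By irreducibility $S$ bounds a $3$-ball $B \subset \twasM$. If $B \subset K_0$ then $B$ itself is the desired submanifold. Otherwise set $K_0' := K_0 \cup B$. Since $\partial B = S$ lies in $\inter K_0$ and is thus disjoint from $\partial K_0$, $K_0'$ is a compact, connected $3$-submanifold of $\twasM$. A Van Kampen computation, together with the observation that the inclusion $K_0 \cap B \hookrightarrow \twasM$ factors through $B$ (whose $\pi_1$ is trivial), shows $\pi_1(K_0') \cong \pi_1(K_0) \cong \pi_1(\twasM)$, so $K_0'$ is a compact core. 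Its boundary is $\partial K_0$ minus the (nonempty, by hypothesis) set of components contained in $B$, so $K_0'$ has strictly fewer boundary components than $K_0$, contradicting the minimality of $K_0$.

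Finally, the $\PP^2$-irreducibility of $K_0$ when $\twasM$ is irreducible follows by combining the two previous parts: the contractible submanifold of $K_0$ bounded by an interior sphere produced above is in fact the $3$-ball $B$, and $B \subset K_0$, so $K_0$ is irreducible; together with the absence of two-sided $\PP^2$'s this gives $\PP^2$-irreducibility. I expect the main technical obstacle to be the careful verification that $K_0 \cup B$ is a genuine $3$-submanifold of $\twasM$ and that the bookkeeping of boundary components yields a strict decrease, together with spelling out the standard---but not entirely elementary---fact that the cover $\twasM$ of the $\PP^2$-irreducible aspherical $3$-manifold $\wasM$ is itself irreducible.
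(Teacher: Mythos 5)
Your overall architecture---torsion-freeness of $\pi_1(\twasM)$ plus a lift to an orientable cover for the projective-plane assertion, and ``enlarge $K_0$ by the region bounded by $S$, then invoke minimality of the number of boundary components'' for the sphere assertion---is exactly the paper's. The projective-plane argument is correct (the paper lifts to the orientation cover of $\twasM$ rather than the universal cover, but the mechanism is identical), and the boundary-component bookkeeping, including the observation that $B\not\subset K_0$ forces some component of $\partial K_0$ to lie inside $B$, matches the paper's proof.

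The genuine gap is your opening claim that $\twasM$ is irreducible, described as a ``standard consequence'' of the $\PP^2$-irreducibility of $\wasM$ ``obtained via the tower construction or by lifting to the universal cover.'' Those techniques yield only $\pi_2(\wasM)=0$, hence $\pi_2(\twasM)=0$; they do not show that a $2$-sphere in the (possibly infinite-sheeted, non-compact) cover $\twasM$ bounds a ball. From $\pi_2(\twasM)=0$ and the proof of Milnor's prime decomposition theorem one gets only that $S$ bounds a compact, simply connected---hence contractible---submanifold $B$; upgrading $B$ to a ball requires the Poincar\'e conjecture or the Meeks--Simon--Yau equivariant sphere theorem, neither of which the appendix permits itself (it explicitly restricts to results available when Jaco--Shalen was written, and only establishes irreducibility of $\twasM$ later, in the case $\partial\wasM\neq\emptyset$, via Hempel's Corollary 13.5). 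This is precisely why the lemma says ``compact, contractible submanifold'' rather than ``ball'' and why its $\PP^2$-irreducibility conclusion is conditional on the irreducibility of $\twasM$; note that your argument, if its premise were granted, would prove the \emph{unconditional} $\PP^2$-irreducibility of $K_0$ and render that clause vacuous---a sign that too much is being assumed. The repair is to run your argument with the contractible $B$ in place of a ball: surjectivity of $\pi_1(K_0)\to\pi_1(K_0\cup B)$ and the strict decrease in boundary components go through verbatim, and $B$ becomes a ball exactly when $\twasM$ is irreducible. A further minor point: when $K_0\cap B$ is disconnected your van Kampen computation acquires ``bridge'' generators which your factoring remark only kills in $\pi_1(\twasM)$, not in $\pi_1(K_0\cup B)$; the clean route to surjectivity is to homotope the arcs of a loop lying in $B$ into $S=\partial B\subset\inter K_0$, using $\pi_1(B)=1$ and the connectedness of $S$.
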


\begin{proof}
We have observed that $\twasM$ is aspherical; since it is
finite-dimensional, it follows that it has a torsion-free fundamental
group. Any two-sided projective plane in $\twasM$ would therefore lift to
a two-sided projective plane in the orientation cover $\twasM'$ of $\twasM$,
contradicting the orientability of $\twasM'$. In particular, $\inter K_0$
contains no two-sided projective plane.

To prove the remaining assertions, it suffices to show that if $S\subset\inter K_0$ is a $2$-sphere, then
$S$ is the boundary of a compact, contractible
submanifold $B$ of $K_0$; and that if $\twasM$ is irreducible then we can
take $B$ to be a $3$-ball. 

Since $\pi_2(\twasM)=0$, it follows from the proof of \cite[Theorem 2]{Mil62}
that the sphere $S\subset\inter K_0\subset\inter \wasM $ is the boundary of
a  compact
submanifold $B$ of $\twasM$ which is simply connected and therefore contractible. If $\twasM$ is irreducible we may take $B$ to
be a ball. Set $K=K_0\cup B$. The inclusion homomorphism
$\pi_1(K_0)\to\pi_1(K)$ is surjective; since the inclusion homomorphism
$\pi_1(K_0)\to\pi_1(\twasM)$ is an isomorphism, the inclusion homomorphism
$\pi_1(K)\to\pi_1(\twasM)$ must also be an isomorphism, i.e. $K$ is a
compact core of $\twasM$. Our choice of $K_0$ then guarantees that
$\partial K$
has at least as many components as $\partial K_0$. But $\partial K$ is
the union of all components of $\partial K_0$ that are not contained in
$\inter B$. Hence $\inter B$ contains no component of $\partial K_0$,
and therefore $B\subset K_0$.
\end{proof}

A connected $3$-manifold $Q$ is termed {\it boundary-irreducible}
if for every boundary component $T$
of $Q$, the
  inclusion homomorphism $\pi_1(T)\to\pi_1(Q)$ is injective;
  otherwise, $Q$ is said to be {\it boundary-reducible.}

\begin{lem}\label{yes it is}
The manifold $K_0$ is boundary-irreducible.
\end{lem}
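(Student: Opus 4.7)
The plan is to argue by contradiction: assume $K_0$ is boundary-reducible, so by the Loop Theorem there is a properly embedded disk $D\subset K_0$ with $\partial D\subset T$ essential in some boundary component $T$ of $K_0$. The strategy is to cut $K_0$ along $D$, use the free indecomposability of $H=\pi_1(K_0)$ to force the cut to have a simply connected piece, and finally contradict the essentiality of $\partial D$ via an Euler-characteristic computation.

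As a preliminary I would rule out $S^2$ and $\PP^2$ components of $\partial K_0$. A sphere component $S$, pushed slightly into $\inter K_0$ along its collar, yields a sphere $S'\subset\inter K_0$ whose collar side $S^2\times I$ is not contractible; so by Lemma~\ref{irreducible core} the deep side of $S'$ is contractible, forcing $K_0$ itself to be contractible and $H$ to be trivial---contradicting free indecomposability. A projective plane boundary component pushed similarly yields a two-sided $\PP^2$ in $\inter K_0$, again contradicting Lemma~\ref{irreducible core}.

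Now let $N(D)\cong D\times[-1,1]$ be a regular neighborhood of $D$ in $K_0$ and set $K_0^\sharp=\overline{K_0\setminus\inter N(D)}$, so that $K_0$ is recovered from $K_0^\sharp$ by attaching the $1$-handle $N(D)$ along two disks $D_1,D_2$. Van Kampen then gives
\[
\pi_1(K_0)\;\cong\;
\begin{cases}
\pi_1(K_0^\sharp)*\mathbb Z & \text{if $K_0^\sharp$ is connected,}\\
\pi_1(K_0^{(1)})*\pi_1(K_0^{(2)}) & \text{if $K_0^\sharp=K_0^{(1)}\sqcup K_0^{(2)}$.}
\end{cases}
\]
Because $H$ is freely indecomposable (hence neither infinite cyclic nor a nontrivial free product), the first case is impossible, and in the second exactly one factor, say $\pi_1(K_0^{(2)})$, is trivial. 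If $\partial D$ were non-separating in $T$, then $T$ minus an annular neighborhood of $\partial D$ would be connected and, after capping, would form a single closed surface $T'\subset\partial K_0^\sharp$ containing both $D_1,D_2$; the handle $N(D)$ would then attach $K_0^{(1)}$ (say) to itself at both ends, leaving $K_0^{(2)}$ disconnected from the rest of $K_0$---contradicting the connectedness of $K_0$. Hence $\partial D$ separates $T$ into two pieces $T_1,T_2$, and the capped closed surfaces $\Sigma_i:=T_i\cup D_i$ satisfy $\Sigma_i\subset\partial K_0^{(i)}$.

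The main obstacle is the concluding step, where I would show that $\Sigma_2$ must be a sphere and thereby contradict the essentiality of $\partial D$. By the preliminary step, each component of $\partial K_0^{(2)}$ other than $\Sigma_2$ is a component of $\partial K_0$ inside $K_0^{(2)}$, hence neither $S^2$ nor $\PP^2$ and so with $\chi\le 0$; and $\Sigma_2$ itself is not $\PP^2$ by Lemma~\ref{irreducible core} (push it into $\inter K_0^{(2)}$). On the other hand, $\pi_1(K_0^{(2)})=1$ yields $\chi(K_0^{(2)})=1+b_2(K_0^{(2)};\mathbb Q)\ge 1$, and the identity $\chi(K_0^{(2)})=\tfrac12\chi(\partial K_0^{(2)})$---valid for any compact $3$-manifold via a doubling argument, regardless of orientability---gives $\chi(\partial K_0^{(2)})\ge 2$. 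If $\Sigma_2\ne S^2$ then $\chi(\Sigma_2)\le 0$ and every component of $\partial K_0^{(2)}$ has $\chi\le 0$, so $\chi(\partial K_0^{(2)})\le 0$, a contradiction. Hence $\Sigma_2=S^2$, which forces $T_2$ to be a disk and $\partial D=\partial T_2$ to bound a disk in $T$, contradicting the essentiality of $\partial D$.
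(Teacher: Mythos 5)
Your proof is correct and follows the paper's strategy: invoke the Loop Theorem to get a properly embedded disk $D$ with $\partial D$ essential in $\partial K_0$, cut along $D$, and use the free indecomposability of $H$ to kill both the non-separating case and the separating case. The only divergence is at the end of the separating case: the paper observes directly that, since $\partial D$ bounds no disk in $\partial K_0$, each complementary piece has a non-sphere boundary component and hence is not simply connected (so $H$ would be a nontrivial free product), whereas you run the contrapositive --- allowing one piece to be simply connected and using $\chi(\partial)=2\chi$ to force its capped-off boundary surface to be a sphere, contradicting essentiality --- which is why you must first rule out sphere and projective-plane components of $\partial K_0$, a fact the paper only establishes afterwards in Lemma~\ref{no trichotomy} by essentially the argument you give.
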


\begin{proof}
Assume that $K_0$ is not boundary-irreducible. Then according to the Loop Theorem
\cite[Theorem 4.2]{hem},
there is a properly embedded disk $D$ in $K_0$ such that
$C\doteq\partial D\subset\partial K_0$ does not bound a disk in
$\partial K_0$. Consider the case in which $D$ separates $K_0$, and
let $A$ and $B$ denote the components of $K_0-D$. Then $H\cong\pi_1(K_0)$ is
isomorphic to the free product $\pi_1(A)\star\pi_1(B)$. Since $C$ does not bound a disk in
$\partial K_0$, each of the compact manifolds $\overline{A}$ and $\overline{B}$ has a boundary
component which is not a $2$-sphere. Hence $A$ and $B$ are not simply
connected, and $H$ is a free product of two non-trivial subgroups;
this contradicts the free indecomposability of $H$. Now consider the
case in which $D$ does not separate $K_0$. In this case, $H\cong\pi_1(K_0)$ is
isomorphic to the free product $\pi_1(K_0-D)\star\Z$. Hence $H$ is
either an infinite cyclic group or a free product of two non-trivial
subgroups. Again we have a contradiction to the free indecomposability of $H$. 
\end{proof}

\begin{lem}\label{no trichotomy}
Every component of $\partial K_0$ is a torus or a Klein bottle.
\end{lem}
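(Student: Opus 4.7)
Plan: The plan is to analyze each component $T$ of $\partial K_0$ and rule out every topological type other than torus and Klein bottle, using Lemma~\ref{yes it is} (which gives the injection $\pi_1(T)\hookrightarrow H\le \pi_1(\wasM)$) and Lemma~\ref{irreducible core}.

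First I would rule out the spherical cases. If $T\cong S^2$, then by Lemma~\ref{irreducible core} $T$ bounds a compact contractible submanifold $B\subseteq K_0$; but $T$ is a full boundary component of $K_0$, so any such $B$ can meet $K_0\setminus B$ only along $T\subseteq\partial K_0$, and by connectedness of $K_0$ this forces $K_0=B$, making $H=\pi_1(K_0)=1$ and contradicting that $H$ is freely indecomposable (hence nontrivial). If $T\cong\mathbb{RP}^2$, then $\pi_1(T)=\mathbb{Z}/2\hookrightarrow H$, impossible since $H$ is torsion-free (as recorded at the start of the proof of Lemma~\ref{irreducible core}).

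The main case to address is $\chi(T)<0$. Here my plan is to play a rank inequality against the hypotheses (a) and (b) of Theorem~\ref{k-gen}. On one side, a Half Lives, Half Dies argument---applied with $\mathbb{F}_2$-coefficients so as to accommodate non-orientable $K_0$---gives
\[
2-\chi(T) \;=\; \dim_{\mathbb{F}_2}H_1(T;\mathbb{F}_2) \;\le\; \dim_{\mathbb{F}_2}H_1(\partial K_0;\mathbb{F}_2) \;\le\; 2\dim_{\mathbb{F}_2}H_1(K_0;\mathbb{F}_2) \;\le\; 2\rank(H)\;\le\;2k,
\]
i.e., $\chi(T)\ge 2-2k$. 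On the other side, hypothesis (a) or (b) forbids a closed-surface subgroup of $\pi_1(\wasM)$ with Euler characteristic strictly in $(2-2k,0)$: for orientable $T$ the hypothesis applies directly, and for non-orientable $T$ I would pass first to the orientation double cover $T'$ (whose fundamental group still embeds in $\pi_1(\wasM)$ and which has $\chi(T')=2\chi(T)$) and apply the hypothesis to $\pi_1(T')$.

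The main obstacle is the borderline configurations where the two bounds are simultaneously tight---most notably an orientable boundary of genus exactly $k$, or a non-orientable boundary with an odd number of crosscaps (where hypothesis (b), which concerns only even Euler characteristic, does not fire directly on $T$ itself). In such cases the rank inequality is achieved with equality, forcing $K_0$ to be a compact aspherical $3$-manifold carrying essentially no more $\mathbb{F}_2$-homology than $T$ does, so that $K_0$ must be an $I$-bundle over a surface. The final contradiction would then come from combining this rigid structure with the peripherality hypothesis on rank-$2$ free abelian subgroups of $\pi_1(\wasM)$ (applied to the tori or Klein bottles that arise in $\partial K_0$ from such a bundle configuration) and the subgroup exclusion in (a) or (b) applied to carefully chosen finite-index subgroups of $\pi_1(T)$, corresponding to finite covers of $T$, until a surface in the forbidden range is produced. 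This borderline analysis is the delicate heart of the argument and parallels the corresponding step in the Jaco--Shalen treatment of the orientable, sufficiently large case.
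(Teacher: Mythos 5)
Your handling of the sphere and projective-plane components matches the paper's, and the overall strategy of playing a homological lower bound on Euler characteristic against hypotheses (a)/(b) is the right one. But the main case has a genuine gap, and you have correctly located it without filling it. Your inequality $\chi(T)\ge 2-2k$ is non-strict and, more importantly, too weak for the non-orientable components: the double-cover trick only lands $\chi(\widetilde T)=2\chi(T)$ in the forbidden open interval $(2-2k,0)$ when $\chi(T)>1-k$, which is a strictly stronger requirement than $\chi(T)\ge 2-2k$ for every $k\ge 2$ (already for $k=2$ it leaves $\chi(T)\in\{-1,-2\}$ unexcluded). Your proposed repairs do not work as stated: equality in the rank inequality does not by itself force $K_0$ to be an $I$-bundle over a closed surface (a genus-$k$ handlebody also achieves equality), and passing to finite-index subgroups of $\pi_1(T)$ moves the Euler characteristic \emph{away} from the interval $(2-2k,0)$, since finite covers multiply $\chi$ by the degree; so no forbidden surface can be produced that way.

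The paper closes exactly this gap by a different mechanism. It bounds $\chi(K_0)=1-h_1+h_2\ge 1-h_1\ge 1-k$ (with $h_i=\dim H_i(K_0;\Q)$), and then upgrades this to the \emph{strict} inequality $\chi(K_0)>1-k$ using Magnus's deficiency theorem (Proposition~\ref{p:Magnus}): if $\chi(K_0)=1-k$, collapsing $K_0$ to a $2$-complex gives $H$ a presentation of deficiency $k$, so $H$ would be free of rank $k$, contradicting free indecomposability. This is the ingredient your argument is missing. Combined with $\chi(\partial K_0)=2\chi(K_0)>2-2k$ and a case split on the number $r$ of negative-$\chi$ boundary components (for $r=1$ the total $\chi(S_1)=2\chi(K_0)$ is automatically even and lies in the forbidden interval; for $r\ge 2$ one gets the improved per-component bound $\chi(S_1)>1-k$, which is exactly what makes the orientation double cover land in the forbidden interval), all of your ``borderline'' configurations are eliminated. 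Without the Magnus step, or some substitute for it, your proof does not go through.
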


\begin{proof}
It follows from Lemma \ref{irreducible core} that no component of
$\partial K_0$ is a projective plane. If some component $T$ of $\partial
K_0$ is a sphere, then by Lemma \ref{irreducible core}, $C$ is the
boundary of a compact,
contractible submanifold of $K_0$, which must be all of $K_0$; this
implies that $H$ is a trivial group, a contradiction to free indecomposability.
thus Alternative (i) of the lemma holds. Thus every component
of $\partial K_0$ has non-positive Euler characteristic. 

We may
assume that 
$K_0$ is not closed, as otherwise the conclusion of the lemma is
vacuously true. Hence if $h_i$
denotes the dimension of $H_i(K_0;\Q)$ for $i=0,1,2$, then
$\chi(K_0)=h_0-h_1+h_2=1-h_1+h_2\ge1-h_1$. Since $\pi_1(K_0)\cong H$
has rank at most $k$, we have $h_1\le k$ and hence
$\chi(K_0)\ge1-k$.

We claim that the latter inequality is strict. Suppose to the contrary
that $\chi(K_0)=1-k$.
Since the compact $3$-manifold
$K_0$ is not closed,  $K_0$ admits some compact PL space $L$ of
dimension at most $2$ as a deformation retract. We may give $L$ the
structure of a finite CW complex with only one $0$-cell. Let $m$ and
$n$ denote, respectively, the number of $1$-cells and the number of
$2$-cells of this CW complex. Then $1-k=\chi(K_0)=\chi(L)=1-m+n$, so
that $m-n=k$. But $H\cong\pi_1(K_0)\cong\pi_1(L)$ admits a
presentation with $m$ generators and $n$ relations; thus the
deficiency of this presentation
is equal to $k$ (in the sense that the number of generators exceeds
the number of relations by $k$). On the other hand, the rank of $H$ is at most $k$ by
hypothesis. We now invoke a theorem due to Magnus \cite{Mag39}
which asserts that
if for some integer $k$, a given group has rank at most $k$ and admits
a presentation with deficiency $k$, then the group is free of rank
$k$. Thus in this case $H$ is free of rank $k$, a contradiction to
free indecomposability. This proves that $\chi(K_0)>1-k$. 

Assume that the conclusion of the lemma is false, and let
us denote the components of $\partial K_0$ with non-zero
Euler characteristic as $S_1,\ldots,S_r$, where $r\ge1$. Since every
component of $\partial K_0$ has non-positive Euler characteristic, we
have $\chi(S_i)<0$ for $i=1,\ldots,r$. We may take the $S_i$ to be
indexed so that $0>\chi(S_1)\ge\ldots\ge\chi(S_r)$. We have
$\sum_{i=1}^r\chi(S_i)=\chi(\partial K_0)=2\chi(K_0)>2-2k$. (The
equality $\chi(\partial K_0)=2\chi(K_0)$ holds because $K_0$ is a compact,
odd-dimensional manifold with boundary: see for example \cite[p. 379,
Exercise 7.3]{massey}.) In
particular we have $0>r\chi(S_1)>2-2k$. Note also that by Lemma
\ref{yes it is}, $\pi_1(S_1)$ is isomorphic to a subgroup of
$\pi_1(K_0)$. Since $K_0$ is a compact core of the covering space
$\twasM$ of $\wasM$, it follows that $\pi_1(\wasM)$ has a subgroup
isomorphic to $\pi_1(S_1)$.

By hypothesis, one of the alternatives (a) or (b) of Theorem
\ref{k-gen} holds. In the case where (a) holds, $\twasM$ is orientable, and
hence the compact core $K_0$ of the covering space $\twasM$ is
orientable. Since $0>r\chi(S_1)>2-2k$ and $r\ge1$, we have $1<\genus S_1<k$. As $\pi_1(\wasM)$ has a subgroup
isomorphic to $\pi_1(S_1)$, this contradicts (a).

Now suppose that Alternative (b) holds and that $r=1$. Then we have
$\chi(S_1)=2\chi(K_0)$, and hence $\chi(S_1)$ is even. Since
$0>\chi(S_1)>2-2k$, and $\pi_1(\wasM)$ has a subgroup
isomorphic to $\pi_1(S_1)$, this contradicts (b).

There remains the case in which (b) holds and $r\ge2$. Since
$0>r\chi(S_1)>2-2k$, we have $0>\chi(S_1)>1-k$. Since the closed
surface $S_1$ has negative Euler characteristic, it has a two-sheeted
covering space $\tS_1$, and $\pi_1(\wasM)$ has a subgroup
isomorphic to $\pi_1(\tS_1)$. But since $\chi(\tS_1)=2\chi(S_1)$, the
integer $\chi(\tS_1)$ is even and satisfies  $0>\chi(\tS_1)>2-2k$;
again we have a contradiction to (b).
\end{proof}

\begin{proof}[Proof of Proposition \ref{behind k-gen}]

Let $\wasM $ be a  $3$-manifold for which the hypotheses of Theorem
\ref{k-gen} hold. Let $H$ be a freely indecomposable subgroup
of $\pi_1(\wasM )$ that has rank at most $k$. Then $\twasM=\twasM_H$ and $K_0$
may be defined as above, and Lemmas \ref{irreducible core},
\ref{yes it is} and \ref{no trichotomy} may be applied.

If $K_0$ is closed, then since $K_0\subset\twasM$ and the
$3$-manifold $\twasM$ is connected, we must have $K_0=\twasM$; thus $\twasM$ is
closed, and $p:\twasM\to \wasM $ is a finite-sheeted covering. Hence $H$ has
finite index in $\pi_1(\wasM )$. For the rest of the proof I shall
assume that $K_0$ has non-empty boundary.

By Lemmas \ref{yes it is} and \ref{no trichotomy}, $K_0$ is
boundary-irreducible, and each component of $\partial K_0$ is a torus
or a Klein bottle.

We define a covering space $\wasN $ of $K_0$ as follows. If every component of $\partial K_0$ is a
torus (in which case $K_0$ may or may not be orientable) we take $\wasN =K_0$ to be the trivial covering. If some
component of $\partial K_0$ is a Klein bottle, so that in particular
$K_0$ is non-orientable, we define $\wasN $ to be the orientation
covering of $K_0$. Note that in any event $\wasN $ is connected, that
each component of $\partial \wasN $ is a torus, and that the covering
$\wasN $ of $K_0$ has degree at most  $2$. We let $p':\wasN \to K_0$
denote the covering projection. Letting $\iota:K_0\to\twasM$ denote the
inclusion map, we set $\ell = p \circ\iota\circ p':\wasN \to \wasM $. Since $p$
and $p'$ are covering maps, and $K_0$ is a compact core of $\twasM$, the
maps $p'$, $\iota$ and $p$ all induce injections of fundamental
groups, and hence so does $\ell $. Now since $K_0$ is
boundary-irreducible, $\wasN $ is also boundary-irreducible. Hence
for every component $T$ of $\partial \wasN $ 
  the inclusion homomorphism $\pi_1(T)\to\pi_1(M)$ is injective, and
therefore $(\ell |T)_\sharp:\pi_1(T)\to\pi_1(\wasM )$ is injective. Since $T$ is
  a torus, it follows that the
  image of $(\ell |T)_\sharp$, which is a subgroup of $\pi_1(\wasM )$ defined
  up to conjugacy, is a free abelian group of rank $2$. By the
  hypothesis of the theorem it now follows that the
  image $P$ of $(\ell |T)_\sharp$ (a subgroup of $\pi_1(\wasM )$ defined up
  to conjugacy) is peripheral; that is, there is a
  connected subsurface 
$X$ of $\partial \wasM $ such that (1) the inclusion homomorphism
$\pi_1(X)\to\pi_1(\wasM )$ is injective, and (2) $P$ is conjugate to a
subgroup of the image of this inclusion homomorphism. It follows from
(1) and (2), together with
the asphericity of $\wasM $, 
that $\ell |T$ is homotopic to a map
whose image is contained in $\partial \wasM $. 
As this holds for every
component $T$ of $\partial \wasN $, it now follows from the homotopy
extension property for compact PL pairs that $\ell $ is homotopic to a map
$f:\wasN \to \wasM $ which is {\it boundary-preserving} in the sense that
$f(\partial \wasN )\subset\partial \wasM $.

Since $\partial K_0\ne\emptyset$, we have $\partial \wasN \ne\emptyset$. Since $f$ is boundary-preserving it
then follows that $\partial \wasM \ne\emptyset$. It follows from  
Lemmas 6.7 and 6.6 of \cite{hem} that every compact, $\PP^2$-irreducible $3$-manifold with non-empty boundary is
sufficiently large, in the sense defined on page 125 of \cite{hem}:
that is, that such a manifold contains a properly embedded, $2$-sided,
incompressible surface. According to
\cite[Corollary 13.5]{hem}, every covering space of a sufficiently large,
$\PP^2$-irreducible, compact $3$-manifold is $\PP^2$-irreducible. Thus,  in the present situation,
$\twasM$ is $\PP^2$-irreducible and in particular irreducible. It then follows from Lemma 
\ref{irreducible core} that $K_0$ is $\PP^2$-irreducible. Since
$\partial K_0\ne\emptyset$, the manifold $K_0$ is also sufficiently
large. Another application of \cite[Corollary 13.5]{hem} now shows
that the covering space $\wasN $ of $K_0$ is $\PP^2$-irreducible; and since
$\partial \wasN \ne\emptyset$, the manifold $\wasN $ is sufficiently large.

Since  $f:\wasN \to \wasM $ is homotopic to $\ell$,
it induces an injection of fundamental groups. 
It follows from \cite[Theorem 13.6]{hem} that if $f:\wasN \to \wasM $ is a boundary-preserving map
between sufficiently large,
$\PP^2$-irreducible, compact $3$-manifolds, and if $\wasN $ is
boundary-irreducible and $f_\sharp:\pi_1(\wasN )\to\pi_1( \wasM )$ is injective,
then either $f$ is homotopic to a covering map or $\wasN $ is an $I$-bundle
over a closed surface.

In the present context, in the case where $f$ is homotopic to a covering map, then the image of
$f_\sharp:\pi_1(\wasN )\to\pi_1(\wasM )$ is a finite-index subgroup $H'$ of
$\pi_1(\wasM )$. If we choose basepoints in $\wasN $, $K_0\subset\twasM$ and $\wasM $
which are compatible in the sense that $p'$ and $p$ map the respective
basepoints of $\wasN $ and $K_0$ to those of $K_0$ and $\wasM $, we have $H'\le
H\le\pi_1(\wasM )$.
Hence $H$ has finite index in $\pi_1(\wasM )$ in this case.

Now consider the case in which  $\wasN $ is an $I$-bundle
over a closed surface. In the subcase where every component of $\partial K_0$ is a
torus, our construction of the covering space $\wasN $ of $K_0$ gives that
$\wasN =K_0$, so that $K_0$ is an $I$-bundle
over a closed surface; since each component of $\partial K_0$ is a
torus, the base of the $I$-bundle is either a torus or a Klein bottle,
and hence $H\cong\pi_1(K_0)$ is either a rank-$2$ free abelian group
or a Klein bottle group.

There remains the subcase in which some component $J$ of $\partial
K_0$ is a Klein bottle.
In this subcase, according to our
construction, $\wasN $ is the orientation cover of $K_0$. Hence
$\tJ\doteq(p')^{-1}(J)$ is the orientation cover of the Klein bottle
$J$. 
Let us fix
compatible basepoints in $\tJ$ and $J$, which will also serve as
basepoints for $\wasN $ and $K_0$ respectively. In terms of these
basepoints, we have a commutative diagram of groups 
$$
\begin{CD}
{\pi_1(\tJ)}  @>{\tj_\sharp}>> {\pi_1(\wasN )}\\
@VV{(p'|\tJ)_\sharp}V @VV{p'_\sharp}V\\
{\pi_1(J)} @>{j_\sharp}>> {\pi_1(K_0)}
\end{CD}
$$
where $\tj$ and $j$ denote inclusion maps. The vertical homomorphisms
in the diagram are injective because they are induced by covering
maps, and $j_\sharp$ is injective because $K_0$ is irreducible; in
view of the commutativity of the diagram, it follows that $\tj_\sharp
$ is also injective. 

Since $\tJ$ is a boundary
component of $\wasN $, which is an $I$-bundle over a closed surface, the
image of $\tj_\sharp$ has index $1$ or $2$ in $\pi_1(\wasN )$; and since
$p$ is a two-sheeted covering map between connected spaces, 
the
image of $p'_\sharp$ has index $2$ in $\pi_1(K_0)$. Hence the image
of $p'_\sharp\circ\tj_\sharp=j_\sharp\circ (p'|\tJ)_\sharp$ has index
$2$ or $4$ in $\pi_1(K_0)$. Since the
image of $(p'|\tJ)_\sharp$ has index $2$ in $\pi_1(J)$, it follows that
the
image of $j_\sharp:\pi_1(J)\to\pi_1(K_0)$, which I shall denote by
$A$, has index $1$ or $2$ in $\pi_1(K_0)$. Since $K_0$ is boundary-
irreducible, the homomorphism $j_\sharp$ is injective and hence
$A\cong\pi_1(J)$.

If  $|\pi_1(K_0):A|=1$ then $H\cong\pi_1(K_0)\cong\pi_1(J)$, i.e. $H$
is a Klein bottle group. Finally, suppose that $|\pi_1(K_0):A|=2$. Let
$K_0^*$ denote the degree-$2$ covering space of $K_0$ determined by the subgroup
$A$ of $\pi_1(K_0)$. Since $K_0$ is $\PP^2$-irreducible and
sufficiently large, $K_0^*$ is $\PP^2$-irreducible by \cite[Corollary
13.5]{hem}. The boundary of $ K_0^*$ contains a surface $J^*$
which is mapped homeomorphically onto $J$ by the covering projection
$q:K_0^*\to K_0$; furthermore, the inclusion homomorphism
$\pi_1(J^*)\to\pi_1(K_0^*)$ is an isomorphism, which by \cite[Theorem
10.2]{hem} and the irreducibility of $K_0^*$ 
implies that $K_0^*$ is a trivial $I$-bundle over a closed
surface. 
Since $K_0$ is doubly covered by a trivial $I$-bundle over a closed
surface, it follows from \cite[Theorem
10.3]{hem}
that $K_0$ is itself an $I$-bundle over a closed surface. Since the
component $J$ of $K_0$ is a Klein bottle, the base of the $I$-bundle
$K_0$ is doubly covered by a Klein bottle, and must therefore itself
be a Klein bottle; hence $H\cong\pi_1(K_0)$ is again a Klein bottle
group.
\end{proof}

\begin{bibdiv}
\begin{biblist}
\bib{accs}{article}{
   author={Anderson, James W.},
   author={Canary, Richard D.},
   author={Culler, Marc},
   author={Shalen, Peter B.},
   title={Free Kleinian groups and volumes of hyperbolic $3$-manifolds},
   journal={J. Differential Geom.},
   volume={43},
   date={1996},
   number={4},
   pages={738--782},
   issn={0022-040X},
   review={\MR{1412683}},
}

\bib{Ago08}{article}{
   author={Agol, Ian},
   title={Criteria for virtual fibering},
   journal={J. Topol.},
   volume={1},
   date={2008},
   number={2},
   pages={269--284},
   issn={1753-8416},
   review={\MR{2399130}},
   doi={10.1112/jtopol/jtn003},
}

\bib{AM15}{article}{
   author={Antol\'{\i}n, Yago},
   author={Minasyan, Ashot},
   title={Tits alternatives for graph products},
   journal={J. Reine Angew. Math.},
   volume={704},
   date={2015},
   pages={55--83},
   issn={0075-4102},
   review={\MR{3365774}},
   doi={10.1515/crelle-2013-0062},
}

\bib{BB06}{article}{
   author={Barnard, Josh},
   author={Brady, Noel},
   title={Distortion of surface groups in CAT(0) free-by-cyclic groups},
   journal={Geom. Dedicata},
   volume={120},
   date={2006},
   pages={119--139},
   issn={0046-5755},
   review={\MR{2252898}},
   doi={10.1007/s10711-006-9072-1},
}
\bib{B81}{article}{
   author={Baudisch, A.},
   title={Subgroups of semifree groups},
   journal={Acta Math. Acad. Sci. Hungar.},
   volume={38},
   date={1981},
   number={1-4},
   pages={19--28},
   issn={0001-5954},
   review={\MR{0634562}},
   doi={10.1007/BF01917515},
}
\bib{BFH97}{article}{
   author={Bestvina, M.},
   author={Feighn, M.},
   author={Handel, M.},
   title={Laminations, trees, and irreducible automorphisms of free groups},
   journal={Geom. Funct. Anal.},
   volume={7},
   date={1997},
   number={2},
   pages={215--244},
   issn={1016-443X},
   review={\MR{1445386}},
   doi={10.1007/PL00001618},
}
\bib{BH92}{article}{
   author={Bestvina, Mladen},
   author={Handel, Michael},
   title={Train tracks and automorphisms of free groups},
   journal={Ann. of Math. (2)},
   volume={135},
   date={1992},
   number={1},
   pages={1--51},
   issn={0003-486X},
   review={\MR{1147956}},
   doi={10.2307/2946562},
}
\bib{BH95}{article}{
   author={Bestvina, M.},
   author={Handel, M.},
   title={Train-tracks for surface homeomorphisms},
   journal={Topology},
   volume={34},
   date={1995},
   number={1},
   pages={109--140},
   issn={0040-9383},
   review={\MR{1308491}},
   doi={10.1016/0040-9383(94)E0009-9},
}
\bib{Bie76}{article}{
   author={Bieri, Robert},
   title={Normal subgroups in duality groups and in groups of cohomological dimension $2$},
   journal={J. Pure Appl. Algebra},
   volume={7},
   date={1976},
   number={1},
   pages={35--51},
   issn={0022-4049},
   review={\MR{0390078}},
   doi={10.1016/0022-4049(76)90065-7},
}
\bib{Bie81}{book}{
   author={Bieri, Robert},
   title={Homological dimension of discrete groups},
   series={Queen Mary College Mathematics Notes},
   edition={2},
   publisher={Queen Mary College, Department of Pure Mathematics, London},
   date={1981},
   pages={iv+198},
   review={\MR{0715779}},
}
\bib{B00}{article}{
   author={Brinkmann, P.},
   title={Hyperbolic automorphisms of free groups},
   journal={Geom. Funct. Anal.},
   volume={10},
   date={2000},
   number={5},
   pages={1071--1089},
   issn={1016-443X},
   review={\MR{1800064}},
   doi={10.1007/PL00001647},
}

\bib{BW05}{article}{
   author={Boileau, Michel},
   author={Weidmann, Richard},
   title={The structure of 3-manifolds with two-generated fundamental group},
   journal={Topology},
   volume={44},
   date={2005},
   number={2},
   pages={283--320},
   issn={0040-9383},
   review={\MR{2114709}},
   doi={10.1016/j.top.2004.10.008},
}

\bib{Br94}{book}{
   author={Brown, Kenneth S.},
   title={Cohomology of groups},
   series={Graduate Texts in Mathematics},
   volume={87},
   note={Corrected reprint of the 1982 original},
   publisher={Springer-Verlag, New York},
   date={1994},
   pages={x+306},
   isbn={0-387-90688-6},
   review={\MR{1324339}},
}

\bib{Bu08}{article}{
   author={Button, J. O.},
   title={Large groups of deficiency 1},
   journal={Israel J. Math.},
   volume={167},
   date={2008},
   pages={111--140},
   issn={0021-2172},
   review={\MR{2448020}},
   doi={10.1007/s11856-008-1043-9},
}

\bib{Bu13}{article}{
  author={Button, J. O.},
  title={Free by cyclic groups are large},
  journal={ArXiv:1311.3506},
  date={2013},
}

\bib{CH12}{article}{
   author={Coulbois, Thierry},
   author={Hilion, Arnaud},
   title={Botany of irreducible automorphisms of free groups},
   journal={Pacific J. Math.},
   volume={256},
   date={2012},
   number={2},
   pages={291--307},
   issn={0030-8730},
   review={\MR{2944977}},
   doi={10.2140/pjm.2012.256.291},
}

\bib{D91}{article}{
   author={Delzant, Thomas},
   title={Sous-groupes \`a deux g\'{e}n\'{e}rateurs des groupes hyperboliques},
   language={French},
   conference={
      title={Group theory from a geometrical viewpoint},
      address={Trieste},
      date={1990},
   },
   book={
      publisher={World Sci. Publ., River Edge, NJ},
   },
   isbn={981-02-0442-6},
   date={1991},
   pages={177--189},
   review={\MR{1170366}},
}
\bib{FH99}{article}{
   author={Feighn, Mark},
   author={Handel, Michael},
   title={Mapping tori of free group automorphisms are coherent},
   journal={Ann. of Math. (2)},
   volume={149},
   date={1999},
   number={3},
   pages={1061--1077},
   issn={0003-486X},
   review={\MR{1709311}},
   doi={10.2307/121081},
}

\bib{Fis24}{article}{
   author={Fisher, Sam P.},
   title={On the cohomological dimension of kernels of maps to $\mathbb{Z}$},
   year={2024},
   status={preprint},
   eprint={https://arxiv.org/abs/2403.18758},
}

\bib{GMRS}{article}{
   author={Gitik, Rita},
   author={Mitra, Mahan},
   author={Rips, Eliyahu},
   author={Sageev, Michah},
   title={Widths of subgroups},
   journal={Trans. Amer. Math. Soc.},
   volume={350},
   date={1998},
   number={1},
   pages={321--329},
   issn={0002-9947},
   review={\MR{1389776}},
   doi={10.1090/S0002-9947-98-01792-9},
}

\bib{hem}{book}{
   author={Hempel, John},
   title={3-manifolds},
   note={Reprint of the 1976 original},
   publisher={AMS Chelsea Publishing, Providence, RI},
   date={2004},
   pages={xii+195},
   isbn={0-8218-3695-1},
   review={\MR{2098385}},
   doi={10.1090/chel/349},
}

\bib{HW15}{article}{
   author={Hagen, Mark F.},
   author={Wise, Daniel T.},
   title={Cubulating hyperbolic free-by-cyclic groups: the general case},
   journal={Geom. Funct. Anal.},
   volume={25},
   date={2015},
   number={1},
   pages={134--179},
   issn={1016-443X},
   review={\MR{3320891}},
   doi={10.1007/s00039-015-0314-y},
}

\bib{HW01}{article}{
   author={Hruska, G. Christopher},
   author={Wise, Daniel T.},
   title={Towers, ladders and the B. B. Newman spelling theorem},
   journal={J. Aust. Math. Soc.},
   volume={71},
   date={2001},
   number={1},
   pages={53--69},
   issn={1446-7887},
   review={\MR{1840493}},
   doi={10.1017/S1446788700002718},
}

\bib{JS79}{article}{
   author={Jaco, William H.},
   author={Shalen, Peter B.},
   title={Seifert fibered spaces in $3$-manifolds},
   journal={Mem. Amer. Math. Soc.},
   volume={21},
   date={1979},
   number={220},
   pages={viii+192},
   issn={0065-9266},
   review={\MR{0539411}},
   doi={10.1090/memo/0220},
}

\bib{KL24}{article}{
	author={Kielak, Dawid},
	author={Linton, Marco},
	title={Virtually free-by-cyclic groups},
	journal={Geom. Funct. Anal.},
	volume={34},
	date={2024},
	number={5},
	pages={1580--1608},
	issn={1016-443X},
	review={\MR{4792841}},
	doi={10.1007/s00039-024-00687-6},
}
\bib{KM02}{article}{
   author={Kapovich, Ilya},
   author={Myasnikov, Alexei},
   title={Stallings foldings and subgroups of free groups},
   journal={J. Algebra},
   volume={248},
   date={2002},
   number={2},
   pages={608--668},
   issn={0021-8693},
   review={\MR{1882114}},
   doi={10.1006/jabr.2001.9033},
}

\bib{Li97}{article}{
   author={Liriano, Sal},
   title={A new proof of a theorem of Magnus},
   journal={Canad. Math. Bull.},
   volume={40},
   date={1997},
   number={3},
   pages={352--355},
   issn={0008-4395},
   review={\MR{1464843}},
   doi={10.4153/CMB-1997-041-1},
}

\bib{Mag39}{article}{
   author={Magnus, Wilhelm},
   title={\"{U}ber freie Faktorgruppen und freie Untergruppen gegebener Gruppen},
   language={German},
   journal={Monatsh. Math. Phys.},
   volume={47},
   date={1939},
   number={1},
   pages={307--313},
   issn={1812-8076},
   review={\MR{1550819}},
   doi={10.1007/BF01695503},
}

\bib{MKS}{book}{
   author={Magnus, Wilhelm},
   author={Karrass, Abraham},
   author={Solitar, Donald},
   title={Combinatorial group theory},
   edition={2},
   note={Presentations of groups in terms of generators and relations},
   publisher={Dover Publications, Inc., Mineola, NY},
   date={2004},
   pages={xii+444},
   isbn={0-486-43830-9},
   review={\MR{2109550}},
}

\bib{massey}{book}{
   author={Massey, William S.},
   title={A basic course in algebraic topology},
   series={Graduate Texts in Mathematics},
   volume={127},
   publisher={Springer-Verlag, New York},
   date={1991},
   pages={xvi+428},
   isbn={0-387-97430-X},
   review={\MR{1095046}},
}

\bib{Mil62}{article}{
   author={Milnor, J.},
   title={A unique decomposition theorem for $3$-manifolds},
   journal={Amer. J. Math.},
   volume={84},
   date={1962},
   pages={1--7},
   issn={0002-9327},
   review={\MR{0142125}},
   doi={10.2307/2372800},
}

\bib{Mut}{article}{
	   author={Mutanguha, Jean Pierre},
	   title={On polynomial free-by-cyclic groups},
	   year={2024},
	   status={preprint},
	   eprint={https://arxiv.org/abs/2412.16150},
	}

\bib{P77}{article}{
   author={Pride, Stephen J.},
   title={The two-generator subgroups of one-relator groups with torsion},
   journal={Trans. Amer. Math. Soc.},
   volume={234},
   date={1977},
   number={2},
   pages={483--496},
   issn={0002-9947},
   review={\MR{0466325}},
   doi={10.2307/1997932},
}

\bib{Sco73}{article}{
   author={Scott, G. P.},
   title={Compact submanifolds of $3$-manifolds},
   journal={J. London Math. Soc. (2)},
   volume={7},
   date={1973},
   pages={246--250},
   issn={0024-6107},
   review={\MR{0326737}},
   doi={10.1112/jlms/s2-7.2.246},
}

\bib{Rivin}{article}{
	author={Rivin, Igor},
	title={Zariski density and genericity},
	journal={Int. Math. Res. Not. IMRN},
	date={2010},
	number={19},
	pages={3649--3657},
	issn={1073-7928},
	review={\MR{2725508}},
	doi={10.1093/imrn/rnq043},
}

\bib{Si18}{article}{
   author={Sisto, Alessandro},
   title={Contracting elements and random walks},
   journal={J. Reine Angew. Math.},
   volume={742},
   date={2018},
   pages={79--114},
   issn={0075-4102},
   review={\MR{3849623}},
   doi={10.1515/crelle-2015-0093},
}

\bib{St83}{article}{
   author={Stallings, John R.},
   title={Topology of finite graphs},
   journal={Invent. Math.},
   volume={71},
   date={1983},
   number={3},
   pages={551--565},
   issn={0020-9910},
   review={\MR{0695906}},
   doi={10.1007/BF02095993},
}
\bib{Stamm67}{article}{
   author={Stammbach, U.},
   title={Ein neuer Beweis eines Satzes von Magnus},
   language={German},
   journal={Proc. Cambridge Philos. Soc.},
   volume={63},
   date={1967},
   pages={929--930},
   issn={0008-1981},
   review={\MR{0217163}},
   doi={10.1017/s0305004100041918},
}

\bib{Tuc77}{article}{
  author={Tucker, Thomas},
  title={On Kleinian groups and 3-manifolds of Euler characteristic zero},
  year={1977},
  status={preprint},
}
\end{biblist}
\end{bibdiv}

\end{document}